\newcommand{\Sigmab}{\mathbf{\Sigma}}
\undefined\usepackage{chngcntr}\fi
\newcommandx{\unsure}[2][1=]{\todo[linecolor=red,backgroundcolor=red!25,bordercolor=red,#1]{#2}}
\newcommandx{\change}[2][1=]{\todo[linecolor=blue,backgroundcolor=blue!25,bordercolor=blue,#1]{#2}}
\newcommandx{\info}[2][1=]{\todo[linecolor=OliveGreen,backgroundcolor=OliveGreen!25,bordercolor=OliveGreen,#1]{#2}}
\newcommandx{\improvement}[2][1=]{\todo[linecolor=Plum,backgroundcolor=Plum!25,bordercolor=Plum,#1]{#2}}
\begin{document}
	
\title{Hessian Averaging in Stochastic Newton Methods \\ Achieves Superlinear Convergence}
	
\author[1]{Sen Na}
\author[2]{Micha{\l } Derezi\'{n}ski}
\author[1]{Michael W. Mahoney}
	
\affil[1]{ICSI and Department of Statistics, University of California, Berkeley}
\affil[2]{Computer Science and Engineering, University of Michigan}

\date{}
	
\maketitle

\begin{abstract}
We consider minimizing a smooth and strongly convex objective~function using a stochastic Newton method. At each iteration, the algorithm is~given an oracle access to a stochastic estimate of the Hessian matrix. The oracle~model includes popular algorithms such as Subsampled Newton and Newton Sketch, which can efficiently construct stochastic Hessian estimates for many tasks,~e.g., training machine learning models. Despite using second-order information, these existing methods do not exhibit superlinear convergence, unless the stochastic noise is gradually reduced to zero during the iteration, which would lead to a computational blow-up in the per-iteration cost. We propose to address this limitation with Hessian averaging: instead of using the most recent Hessian estimate, our algorithm maintains an average of all the past estimates. This reduces the stochastic noise while avoiding the computational blow-up.~We show that this scheme exhibits local $Q$-superlinear convergence with a non-asymptotic~rate of $(\Upsilon\sqrt{\log (t)/t}\,)^{t}$, where $\Upsilon$ is proportional to the level of stochastic noise in the Hessian oracle. A potential drawback of this (uniform averaging) approach is that the averaged estimates contain Hessian information from the global phase of the method, i.e., before the iterates converge to a~local neighborhood. This leads to a distortion that may substantially delay the superlinear convergence until long after the local neighborhood is reached. To address this drawback, we study a number of weighted averaging schemes that assign larger weights to recent Hessians, so that the superlinear convergence arises sooner, albeit with a slightly slower rate. Remarkably, we show that~there exists a universal weighted averaging scheme that transitions to local convergence at an optimal stage, and still exhibits a superlinear convergence rate nearly (up to a logarithmic~factor) matching that of uniform Hessian averaging.
	
\end{abstract}

\section{Introduction}\label{sec:1}

We consider minimizing a smooth and strongly convex objective function:
\begin{equation}\label{pro:1}
\min_{\xb\in \mR^d}\; f(\xb),
\end{equation}
where $f:\mR^d\rightarrow \mR$ is twice continuously differentiable with $\Hb(\xb) = \nabla^2 f(\xb) \in \mR^{d\times d}$ being its Hessian matrix. We suppose the Hessian $\Hb(\xb)$ satisfies
\begin{equation*}
\lambda_{\min}\cdot \Ib \preceq \Hb(\xb) \preceq\lambda_{\max}\cdot\Ib, \quad\;  \forall \xb\in\mR^d,
\end{equation*}
for some constants $0<\lambda_{\min}\leq\lambda_{\max}$, and we let $\kappa = \lambda_{\max}/\lambda_{\min}$ denote its~condition number. We also let $\xb^\star = \arg\min_{\xb\in \mR^d} f(\xb)$ be the unique global~solution.

Problem \eqref{pro:1} is arguably the most basic optimization problem, which~nevertheless arises in many applications in machine learning and statistics \citep{ShalevShwartz2009Understanding, Bubeck2015Convex, Bottou2018Optimization, Lan2020First}. There is a plethora of methods for solving \eqref{pro:1}, and each (type of) method has its own benefits under favorable settings. This paper particularly focuses on solving \eqref{pro:1} via second-order methods, where a (approximated) Hessian matrix is involved in the scheme. Consider the classical Newton's method of the form 
\begin{equation}\label{equ:Newton}
\xb_{t+1} = \xb_t - \mu_t \Hb_t^{-1}\nabla f_t,
\end{equation}
where $\nabla f_t = \nabla f(\xb_t)$, $\Hb_t = \Hb(\xb_t)$, and $\mu_t$ is selected by passing a line search condition. Classical results indicate that Newton's method in \eqref{equ:Newton} \mbox{exploits}~a~global $Q$-linear convergence in the error of function value $f(\xb_t)-f(\tx)$; and a local $Q$-quadratic convergence in the iterate error $\|\xb_t-\tx\|$. More precisely, Newton's method has two phases, separated by a neighborhood 
\begin{equation*}
\N_\nu = \{\xb: \|\xb - \xb^\star\|\leq \nu\}, \quad \quad \text{ for some } \nu>0.
\end{equation*}
When $\xb_t\not\in \N_\nu$, \eqref{equ:Newton} is in the \textit{damped Newton phase}, where the objective $f(\xb_t)$ is decreased by at least a fixed amount in each iteration, and converges linearly. In this phase, $\|\xb_t-\tx\|$ may converge slowly (e.g., it provably converges $R$-linearly using the fact that $\|\xb_t-\tx\|^2\leq 2/\lambda_{\min}(f(\xb_t)-f(\tx))$). When $\xb_t\in \N_\nu$, \eqref{equ:Newton} transits to the \textit{quadratically convergent phase}, where the unit stepsize is accepted and $\|\xb_t-\xb^\star\|$ converges quadratically. See \cite[Section 9.5]{Boyd2004Convex} for the analysis. Compared to first-order methods, although second-order methods often exploit a faster convergence rate and behave more robustly to tuning parameters, they hinge on a high computational cost of forming the exact Hessian matrix $\Hb_t$ at each step. To resolve such a computational bottleneck, which is particularly pressing in large-scale data applications,~a variety of~deterministic and stochastic methods have been proposed for constructing different alternatives of the Hessian matrix.

When a deterministic alternative of $\Hb_t$, say $\cHb_t$, is employed in \eqref{equ:Newton}, which~may come from a finite difference approximation of the second-order derivative, or from a quasi-Newton update such as BFGS or DFP, the convergence behavior is well understood. Specifically, if $\{\cHb_t\}_t$ are positive definite with uniform upper and lower bounds, the damped phase is preserved by the same analysis as Newton's method. Furthermore, the quadratically convergent phase is weakened to a superlinearly convergent phase, and the superlinear convergence occurs if and only if  the celebrated Dennis-Mor\'e condition
\citep{Dennis1974characterization} holds, i.e.,
\begin{equation}\label{equ:DM:cond}
\lim\limits_{t\rightarrow \infty} \frac{\|( \cHb_t - \Hb_t) \cHb_t^{-1}\nabla f_t\|}{\| \cHb_t^{-1}\nabla f_t\|} = 0.
\end{equation}
See \cite[Theorem 3.7]{Nocedal2006Numerical} for the analysis. Recently, a deeper understanding of quasi-Newton methods for minimizing smooth and strongly convex objectives has been reported in \cite{Jin2020Non, Rodomanov2021Greedy, Rodomanov2021New, Rodomanov2021Rates}. These works  enhanced the analyses based on Dennis-Mor\'e condition in \eqref{equ:DM:cond} by performing a \textit{local, non-asymptotic} convergence analysis, and provided explicit superlinear rates for different potential functions of quasi-Newton methods. The non-asymptotic superlinear results are more informative than asymptotic superlinear results established via \eqref{equ:DM:cond}, i.e., $\|\xb_{t+1} - \ttxb\|/\|\xb_t - \ttxb\|\rightarrow 0$ as $t\rightarrow\infty$. However, the analyses in \cite{Jin2020Non, Rodomanov2021Greedy, Rodomanov2021New, Rodomanov2021Rates}  highly rely on specific properties of quasi-Newton updates in the Broyden class, and do not apply to general Hessian approximations (e.g., finite difference~approximation).

\subsection{Stochastic Newton methods}
\label{subsec:stochastic-newton}

A parallel line of research explores stochastic Newton methods, where a stochastic approximation $\hHb_t$ is used in place of $\Hb_t$ in \eqref{equ:Newton}. The stochasticity of $\hHb_t$ may come from evaluating the Hessian on a random subset of data points~(i.e., subsampling), or from projecting the Hessian onto a random subspace to~achieve~the dimension reduction (i.e., sketching). To unify different approaches, we consider in this paper a general Hessian approximation given by a \textit{stochastic oracle}. In particular, we express the approximation $\hHb(\xb)$ at any point $\xb$ by
\begin{equation}\label{equ:error}
\hHb(\xb) = \Hb(\xb) + \Eb(\xb),
\end{equation}
where $\Eb(\xb)\in \mS^{d\times d}$ is a symmetric random noise matrix following a certain distribution (conditional on $\xb$) with mean zero. At iterate $\xb_t$, we query the oracle to obtain an approximation $\hHb_t = \hHb(\xb_t)$, which (implicitly) comes from generating a realization of the random matrix $\Eb_t = \Eb(\xb_t)$, and then adding $\Eb_t$ to the true Hessian $\Hb_t$. We do not assume $\Eb_t$ and $\Hb_t$ are accessible to us.

As mentioned, the popular specializations of stochastic oracle include subsampling and sketching. For Hessian subsampling, a finite-sum objective is considered
\begin{equation}\label{equ:SSN:obj}
f(\xb) = \frac{1}{n}\sum_{i=1}^{n}f_i(\xb).
\end{equation}
In the $t$-th iteration, a subset $\xi_t \subseteq\{1,2,\ldots, n\}$ is sampled uniformly at random, and the subsampled Hessian is defined as
\begin{equation}\label{equ:SSN}
\hHb_t = \frac{1}{|\xi_t|}\sum_{i\in \xi_t}\nabla^2f_i(\xb_t).
\end{equation}
We note that the components $f_i$ in \eqref{equ:SSN:obj} may not be convex even if the function~$f$ is strongly convex.\footnote{A concrete example is finding the leading eigenvector of a covariance matrix $\Ab = \frac{1}{n} \sum_{i=1}^{n}\ab_i\ab_i^\top$. Here, the objective can be structured as $f_{\bb, \nu}(\xb) = \frac{1}{n}\sum_{i=1}^{n}(\xb^\top(\nu\Ib - \ab_i\ab_i^\top)\xb - \bb^\top\xb)$, where $\nu>\|\Ab\|$ and $\bb$ are given. See \cite{Garber2015Fast} for details.} This is the so-called \emph{sum-of-non-convex} setting \cite[e.g., see][]{Garber2015Fast, Garber2016Faster, ShalevShwartz2016SDCA, AllenZhu2016Improved}. Our oracle model \eqref{equ:error} allows for this, since $\hHb_t$ is not required to be positive semidefinite, while the existing Subsampled Newton methods generally do not allow it. See Section \ref{subsec:literature} and Example \ref{exp:1} for further discussion.

For Hessian sketching, one first forms the square-root Hessian matrix~$\Mb_t\in \mR^{n\times d}$ satisfying~$\Hb_t = \Mb_t^\top\Mb_t$, where $n$ is the number of data points. Then, one generates a random sketch matrix $\Sb_t\in \mR^{s\times n}$ with the sketch size $s$ and~the property $\mE[\Sb_t^\top\Sb_t] = \Ib$, and the sketched Hessian is defined as
\begin{equation}\label{equ:SKN}
\hHb_t = \Mb_t^\top\Sb_t^\top\Sb_t\Mb_t.
\end{equation}
In some cases, $\Mb_t$ can be easily obtained. One particular example is a generalized linear model, where the objective has the form 
\begin{equation}\label{equ:SKN:obj}
f(\xb) = \frac{1}{n}\sum_{i=1}^nf_i(\ab_i^\top\xb)
\end{equation}
with $\{\ab_i\}_{i=1}^n\in \mR^d$ being $n$ data points. In this case, $\Mb_t = \frac{1}{\sqrt{n}}\cdot\diag(f_1''(\ab_1^\top\xb_t)^{1/2}, \ldots, f_n''(\ab_n^\top\xb_t)^{1/2})\Ab$ where $\Ab = (\ab_1, \ldots, \ab_n)^\top\in \mR^{n\times d}$ is the data matrix.

Another family of stochastic Newton methods is based on the so-called~Sketch-and-Project framework \citep[e.g.,][]{Gower2015Randomized}, where a low-rank Hessian estimate is used to construct a Newton-like step (with the Moore-Penrose pseudoinverse instead of the matrix inverse). For example, in one approach \citep{Gower2019RSN}, the Newton-like step in the $t$-th iteration is obtained by generating a sketching matrix $\Sb_t\in\mR^{s\times d}$ to construct a rank-$s$ approximate of the inverse Hessian, resulting in the update:
\vskip-6pt
\begin{equation*}
\xb_{t+1} = \xb_t - \mu_t\Sb_t^\top(\Sb_t\Hb_t\Sb_t^\top)^\dagger\Sb_t\nabla f(\xb_t).
\end{equation*}
While linear convergence rates have been derived for these methods \citep[e.g.,][]{Derezinski2022Sharp}, they do not fit into our stochastic oracle framework due to an intrinsic bias arising in the Hessian estimates (see Section \ref{subsec:literature} for further discussion).

Numerous stochastic Newton methods using \eqref{equ:SSN} or \eqref{equ:SKN} with various types of convergence~guarantees have been proposed \citep{Byrd2011Use, Byrd2012Sample, Friedlander2012Hybrid, Erdogdu2015Convergence, RoostaKhorasani2018Sub, Bollapragada2018Exact, Pilanci2017Newton, Agarwal2017Second, Kovalev2019Stochastic, Derezinski2019Distributed, Derezinski2020Debiasing, Derezinski2020Precise, Lacotte2021Adaptive}. We review these related references later and point to \cite{Berahas2020investigation} for a recent brief survey. However, the existing approaches to stochastic Newton~methods share common limitations, which we now discuss.

The majority of literature studies the convergence of stochastic Newton by establishing a \textit{high probability error recursion}. For example, \cite{Erdogdu2015Convergence, RoostaKhorasani2018Sub, Pilanci2017Newton, Agarwal2017Second} all showed, roughly speaking, that stochastic Newton methods exploit a local~linear-quadratic recursion: 
\begin{equation}\label{equ:LQ:error}
\|\xb_{t+1}-\xb^\star\| \leq c_1\|\xb_t - \xb^\star\| + c_2\|\xb_t-\xb^\star\|^2 \quad\quad  \text{with probability } 1 - \delta,
\end{equation}
for some constants $c_1, c_2>0$ and $\delta\in(0, 1)$. These constants depend on~the~sample/sketch size at each step. Based on \eqref{equ:LQ:error}, one often applies the result for~$t = 0,1,\ldots,T$ recursively, uses the union bound, and establishes local convergence with probability $1 - T\delta$. This approach has the following key~drawbacks.
\begin{enumerate}[label=(\alph*),topsep=1pt]
\item The presence of the linear term with coefficient $c_1>0$ in the recursion~means that, once $\xb_t$ is sufficiently close to $\tx$, the algorithm can only achieve the linear convergence, as opposed to the quadratic or superlinear convergence achieved by deterministic methods. Prior works \citep[e.g.,][]{RoostaKhorasani2018Sub, Bollapragada2018Exact} discussed how to achieve local superlinear convergence by diminishing the coefficient $c_1 = c_{1,t}$ gradually as $t$ increases. However, since $c_1$ is proportional to~the magnitude of stochastic noise in the Hessian estimates, diminishing it requires increasing the sample size for estimating the Hessian, which results in a blow-up of the per-iteration computational cost. To be specific, $c_1$ is proportional to the reciprocal of the square root of the sample size; thus this blow-up in terms of the sample size can be as fast as exponential if we wish to attain the quadratic convergence rate, and linear if we wish to attain the superlinear convergence rate established in this paper later.

\vskip5pt
\item The presence of the failure probability $\delta$ in \eqref{equ:LQ:error} means that after $T$ iterations, a convergence guarantee may only hold with probability $1-T\delta$. Thus, the failure probability explodes when $T\rightarrow \infty$. To resolve this issue~one~can~gradually diminish $\delta$, e.g., $\delta = \delta_t = O(1/t^2)$ such that $\sum_t \delta_t<\infty$. However,~once again, such adjustment on $\delta$ leads to an increasing sample/sketch size as~the algorithm proceeds, although not as drastically as in (a) (it suffices to~increase the sample size logarithmically). Thus, in our stochastic oracle model, where the noise level (and hence the sample size) remains constant, it is problematic to show any convergence with high probability from \eqref{equ:LQ:error}~as~$T\rightarrow \infty$.
\end{enumerate}

We note that some prior works \citep{Bollapragada2018Exact, Meng2020Fast} showed the convergence in expectation guarantees. Although the explosion of the failure probability in (b) is suppressed by the direct characterization of the~expectation, the drawback (a) still remains. In addition, the convergence in expectation results require stronger assumptions. For example, \cite[(2.17)]{Bollapragada2018Exact} and \cite[Theorem 2]{Meng2020Fast} showed similar recursions to \eqref{equ:LQ:error} for $\mE[\|\xb_t - \ttxb\|]$. However, these works assumed a \textit{bounded moments of iterates} condition, i.e., $\mE[\|\xb_t-\tx\|^2]\leq \gamma(\mE[\|\xb_t-\tx\|])^2$ for some constant $\gamma>0$, and assumed each component $f_i$ in \eqref{equ:SSN:obj} to be strongly convex. Both conditions~are not needed in high probability analyses. Note that the majority of high probability analyses assume that each component $f_i$ is convex though not necessarily strongly convex \citep{Agarwal2017Second, RoostaKhorasani2018Sub}, while we do not impose any conditions on $f_i$, which significantly weakens the conditions in the existing literature. Furthermore, the stepsize in \cite{Bollapragada2018Exact, Meng2020Fast} was prespecified without any adaptivity. \cite{Bellavia2019Subsampled} introduced a non-monotone line search to address the adaptivity issue, while extra conditions such as the compactness of $\{\xb_t\}_t$ were imposed for their expectation analysis.

\subsection{Main results: Stochastic Newton with Hessian averaging}

Concerned by the above limitations of stochastic Newton methods, we ask the following question: 
\begin{quote}
\begin{em}
Can we design a stochastic Newton method that exploits global linear and local superlinear convergence in high probability, even for an infinite iteration sequence, and without increasing the computational cost in each iteration?
\end{em}
\end{quote}

We provide an affirmative answer to this question by studying a class of stochastic Newton methods with Hessian averaging. A simple intuition~is~that~the approximation error $\Eb_t$ can be de-noised by aggregating all the errors $\{\Eb_i\}_{i=0}^{t}$, inspired by the central limit theorem for martingale differences. Thus, if we could reuse the past samples and replace $\Eb_t$ by $\frac{1}{t+1}\sum_{i=0}^t\Eb_i$, then the matrix $\Hb_t + \frac{1}{t+1}\sum_{i=0}^t\Eb_i$ would be more precise than $\hHb_t = \Hb_t+\Eb_t$. However,~since~$\{\Eb_i\}_{i=0}^t$ are unknown and only $\{\hHb_i\}_{i=0}^t$ are known to us, in order~to~de-noise~$\Eb_t$, we can only aggregate all the Hessian estimates $\{\hHb_i\}_{i=0}^t$~and~derive~$\frac{1}{t+1}\sum_{i=0}^t\hHb_i = \frac{1}{t+1}\sum_{i=0}^t\Hb_i + \frac{1}{t+1}\sum_{i=0}^t\Eb_i$. Compared to $\hHb_t$, such a matrix does not preserve the information of the true Hessian $\Hb_t$. Thus, we observe a trade-off between de-noising the error $\Eb_t$ and preserving the Hessian information $\Hb_t$: on one hand, we want to assign equal weights to all the past errors to achieve the fastest concentration for the error average (see Remark \ref{rem:2}); on the other hand, we want to assign all weights to $\Hb_t$ to fully preserve the most recent Hessian information. In this paper, we investigate this trade-off, and show how the Hessian averaging resolves the drawbacks of existing stochastic Newton methods.

\subsubsection{The proposed method}

We consider an online averaging scheme (we let $\tHb_{-1} = \0$):
\begin{equation}\label{equ:ave:Hessian}
\tHb_t  = \frac{w_{t-1}}{w_t} \tHb_{t-1} + \rbr{1 - \frac{w_{t-1}}{w_t}}\hHb_t, \quad t = 0,1,2,\ldots,\\
\end{equation}
where $\{w_t\}_{t=-1}^{\infty}$ is a prespecified increasing non-negative weight sequence~starting with $w_{-1} = 0$. By \emph{online} we mean that we only keep an average Hessian $\tHb_t$ in each iteration, and update it as \eqref{equ:ave:Hessian} when we obtain a new Hessian estimate. This scheme is in contrast to keeping all the past Hessian estimates. By the scheme \eqref{equ:ave:Hessian}, we note that, at iteration $t$, we re-scale the weights of $\{\hHb_i\}_{t=0}^{t-1}$ equally by a factor of $w_{t-1}/w_t$, instead of completely redefining all the weights. We note that such an online averaging scheme is memory and computation efficient: compared to stochastic Newton methods without averaging, we only require an extra $O(d^2)$ space to save $\tHb_t$ and $O(d^2)$ flops to update it, which is negligible in the setting of stochastic Newton methods where the Hessian vector product requires $O(d^2)$ flops and solving the exact Newton system requires $O(d^3)$ flops. In \eqref{equ:ave:Hessian}, we use the ratio factors $w_{t-1}/w_t$ instead of direct weights merely to simplify our later presentation. Furthermore, \eqref{equ:ave:Hessian} can be reformulated as a general weighted average as follows:\vskip-0.32cm
\begin{equation}\label{equ:z}
\tHb_t =\sum_{i=0}^tz_{i,t}\hHb_i,\qquad\text{with}\quad
  z_{i,t}\coloneqq(w_i-w_{i-1})/w_t.
\end{equation}\vskip-0.1cm
\noindent In particular, by setting $w_t=t+1$, we obtain $z_{i,t} = 1/(t+1)$ and further have $\tHb_t = \frac1{t+1}\sum_{i=0}^{t}\hHb_i$. Thus, we recover simple uniform averaging. If we let the sequence $w_t$ grow faster-than-linearly,~this results in a weighted average that is skewed towards the more recent Hessian estimates.

\begin{algorithm}[!t]
\caption{Stochastic Newton Method with Hessian Averaging}
\label{alg:1}
\begin{algorithmic}[1]
\State \textbf{Input}: iterate $\xb_0$, weights $\{w_t\}_{t=-1}^{\infty}$, scalars $\beta\in(0, 1/2)$, $\rho\in(0, 1)$, and $\tHb_{-1} = \0$;
\For{ $t = 0,1,2,\ldots$}
\State Obtain a stochastic Hessian approximation $\hHb_t = \Hb_t + \Eb_t$;
\State Compute the Hessian average via \eqref{equ:ave:Hessian}: $\tHb_t  = \frac{w_{t-1}}{w_t} \tHb_{t-1} + \rbr{1 - \frac{w_{t-1}}{w_t}}\hHb_t$;
\State Compute the Newton direction $\pb_t$ by solving: $\tHb_t\pb_t = -\nabla f_t$;\vspace{1mm}
\State \textbf{if} $\tHb_t\pb_t = -\nabla f_t$ is not solvable \textbf{or} $\nabla f_t^\top\pb_t\geq 0$, \ \textbf{then} \ skip iteration with~$\xb_{t+1}=\xb_t$; 
\State Compute a stepsize $\mu_t = \rho^{j_t}$, where $j_t$ is the smallest nonnegative integer such that
\begin{equation*}
\text{Armijo condition}:\hskip 0.5cm f(\xb_t+\mu_t\pb_t) \leq f(\xb_t) + \mu_t\beta \nabla f_t^\top\pb_t
\end{equation*}
\State Update $\xb_{t+1} = \xb_t +\mu_t\pb_t$;
\EndFor
\end{algorithmic}
\end{algorithm}

Our proposed method replaces $\Hb_t$ by $\tHb_t$ when computing the Newton direction \eqref{equ:Newton}. The detailed procedure is displayed in Algorithm~\ref{alg:1}.~We make two comments about the algorithm.

First, Algorithm \ref{alg:1} supposes that, unlike the Hessian, the function values~and gradients are~known deterministically. As a result, the method \mbox{generates}~a~monotonically decreasing sequence of $f(\xb_t)$. If, on the other hand, $f(\xb_t)$ and $\nabla f(\xb_t)$ were known with random noise, we would have to relax the Armijo condition by adding extra error terms (hence, $f(\xb_t)$ would be potentially non-monotonic), and analyze the resulting method under a random model framework such as in \cite{Blanchet2019Convergence, Chen2017Stochastic}. We leave these non-trivial extensions to future work.

Second, for early iterates, the average Hessian $\tHb_t$ may not be a good~approximate of $\Hb_t$. For example, $\tHb_t$ may be indefinite (or even singular) so that $\pb_t$ is not a descent direction. In that case, we skip the line search step~and~let $\xb_{t+1}=\xb_t$ (see Line 6 of Algorithm \ref{alg:1}). Further, even if $\tHb_t$ is positive definite, it may not be properly bounded, and thus leads to an extremely small stepsize~$\mu_t$. However, as analyzed later in Lemmas \ref{lem:3} and \ref{lem:8}, the errors~$\{\Eb_i\}_{i=0}^t$~are~sufficiently concentrated for large $t$ with high probability, so that $\tHb_t$ is properly bounded from above and below. Thus, Algorithm \ref{alg:1} is well defined and can be performed for any iteration $t$, while it behaves like the classical Newton~method only after a few iterations (the threshold is provided in Lemmas \ref{lem:3} and \ref{lem:8}).

\subsubsection{Main results}

\vskip-0.2cm
We conduct convergence analysis for Algorithm \ref{alg:1} with different weight sequences $\{w_t\}_{t=-1}^\infty$. Throughout the analysis, we only assume the oracle noise $\Eb(\xb)$ has a sub-exponential tail, which in particular includes Hessian subsampling and Hessian sketching as special cases. Our convergence guarantees rely on the quality of the average Hessian approximation $\tHb_t$; thus, we do not require $\hHb_t$ to be a good~approximation of $\Hb_t$. In other words, our scheme is applicable even if we generate a \textit{single} sample for forming the subsampled Hessian estimator, and applicable even if some components $f_i$ are non-convex. This is because the noise $\Eb_t$ can always be reduced by averaging (ensured by the central limit theorem) even if it has a large variance.

We show that, with high probability, Algorithm \ref{alg:1} has four convergence~phases with three transition points:
\begin{enumerate}[label=(\alph*),topsep=1pt] 
\item Global phase: $\xb_t$ converges from any initial iterate $\xb_0$ to a local neighborhood of $\tx$, in which the unit stepsize starts being accepted.
\item Steady phase: $\xb_t$ stays in the neighborhood.
\item Slow superlinear phase: $\xb_t$ starts converging superlinearly with a rate~gradually increasing.
\item Fast superlinear phase: the superlinear acceleration reaches its full potential and is maintained for~all~the following iterations.
\end{enumerate}
\noindent We mention that the superlinear rate is measured with respect to the error $\|\xb_t-\ttxb\|_{\ttHb} \coloneqq \sqrt{(\xb_t-\ttxb)^\top\ttHb(\xb_t-\xb_0)}$ where~$\ttHb = \Hb(\ttxb)$. Before introducing the main results in Theorems \ref{thm:main-1} and \ref{thm:main-2}, we summarize them in Table \ref{tab:1}, showing the transition points for~two weight sequences. The transitions for general weights are provided~in Section~\ref{sec:4}. We use $\Upsilon$ to denote the \textit{noise level} of stochastic Hessian oracle (see Assumption \ref{ass:3} for a formal definition; typically $\Upsilon = O(\kappa)$). We also use $O(\cdot)$ to suppress the logarithmic factors and the dependence on other constants except $\Upsilon$ and $\kappa$. We emphasize that all results of the~paper require that the (weighted) average of the errors $\{\Eb_i\}_{i=0}^t$ is sufficiently concentrated; thus, they hold \textit{with high probability}.

\begin{table}
\centering
\begin{tabular}{c|c|c|c}
Weights $w_t$ & First transition & Second transition & Third transition \\ 
\hline \noalign{\vskip 2pt}
~$t+1$ \hfill {\scriptsize (Thm.~\ref{thm:main-1})\hspace{-2mm}~}& $O(\kappa^2+\Upsilon^2)$ & $O(\kappa\cdot \{\kappa^2+\Upsilon^2\})$ & $O(\kappa^2/\Upsilon^2\cdot\{\kappa^2+\Upsilon^2\}^2 )$\\
\hline \noalign{\vskip 2pt}
$(t+1)^{\log(t+1)}$ \hfill {\scriptsize (Thm.~\ref{thm:main-2})\hspace{-2mm}~}& $O(\kappa^2+\Upsilon^2)$ & $O(\kappa^2+\Upsilon^2)$ & $O(\kappa^2+\Upsilon^2)$ 
\end{tabular}
\caption{Three transitions of two averaging schemes: uniform averaging ($w_t=t+1$, see Theorem~\ref{thm:main-1}) and our proposed weighted averaging scheme ($w_t=(t+1)^{\log(t+1)}$, see Theorem~\ref{thm:main-2}). For each weight sequence, the three transitions occur with high probability.} \label{tab:1}

\vspace{-3mm}\end{table}
%\vskip-0.7cm
The first main result studies the uniform averaging scheme, which is informally~stated below. We refer to Theorem \ref{thm:3} for a formal statement. 
%\vspace{-2mm}
\begin{theorem}[Uniform averaging; informal]\label{thm:main-1}
Consider Algorithm \ref{alg:1} with $w_t = t+1$. With high probability, the algorithm satisfies: \vspace{-1mm}
\begin{enumerate}
\item After $\T = O(\kappa^2+\Upsilon^2)$ iterations, $\xb_t$ converges to a local neighborhood of~$\ttxb$.
\item After $O(\T\kappa)$ iterations, $\xb_t$ converges superlinearly to $\tx$.
\item After $O(\T^2\kappa^2/\Upsilon^2)$ iterations, the superlinear rate reaches $(\Upsilon\sqrt{\log(t)/t}\,)^{t}$, and this rate is maintained as $t\rightarrow \infty$.
\end{enumerate}
\end{theorem}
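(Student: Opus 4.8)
The plan is to split the analysis into a global (damped) phase and a local phase, with the concentration of the averaged estimate $\tHb_t$ driving both. The starting point is the decomposition
\begin{equation*}
\tHb_t - \Hb_t = \frac{1}{t+1}\sum_{i=0}^t \Eb_i \;+\; \frac{1}{t+1}\sum_{i=0}^t(\Hb_i - \Hb_t),
\end{equation*}
which separates the error of the averaged Hessian into a zero-mean \emph{noise} part and a \emph{bias} (distortion) part. First I would control the noise part: since $\{\Eb_i\}$ is a martingale difference sequence with sub-exponential tails (Assumption \ref{ass:3}), a matrix Freedman/Bernstein inequality for martingales should yield, with high probability \emph{simultaneously for all} $t$, a bound of the form $\big\|\frac{1}{t+1}\sum_{i=0}^t\Eb_i\big\| \lesssim \Upsilon\lambda_{\min}\sqrt{\log(t)/t}$. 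Establishing this uniformly over the infinite horizon---rather than through a per-iteration union bound---is exactly what avoids the exploding failure probability discussed in drawback (b), and I expect it to require a maximal-inequality or dyadic-blocking argument. The bias part I would bound via the Hessian-Lipschitz assumption, $\|\Hb_i - \Hb_t\| \le L\|\xb_i - \xb_t\|$, so that the distortion is controlled by $\frac{1}{t+1}\sum_{i=0}^t\|\xb_i - \ttxb\|$, whose early (global-phase) terms contribute an $O(\T/t)$ tail that decays only linearly in $t$.

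For item 1, I would show that once $t \gtrsim \Upsilon^2$ the noise bound forces $\tfrac{\lambda_{\min}}{2}\Ib \preceq \tHb_t \preceq 2\lambda_{\max}\Ib$, so that $\pb_t = -\tHb_t^{-1}\nabla f_t$ is a genuine descent direction with bounded condition number (before this warm-up completes, iterations may simply be skipped by the safeguard in Algorithm~\ref{alg:1}). A standard damped-Newton Armijo argument then guarantees a per-step decrease of $f(\xb_t)-f(\ttxb)$; summing these decreases until the iterate enters $\N_\nu$ accounts for $O(\kappa^2)$ iterations, and combining with the $O(\Upsilon^2)$ warm-up gives the entry time $\T = O(\kappa^2 + \Upsilon^2)$.

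For the local phase I would pass to the Newton error recursion. With the unit stepsize accepted in $\N_\nu$ (which itself must be verified from the Armijo condition), writing $\nabla f_t = \bHb_t(\xb_t - \ttxb)$ for the segment-averaged Hessian $\bHb_t$ gives
\begin{equation*}
\xb_{t+1} - \ttxb = \tHb_t^{-1}(\tHb_t - \bHb_t)(\xb_t - \ttxb),
\end{equation*}
so that the contraction factor obeys $\eta_t \lesssim \kappa\big(\tfrac{L}{\lambda_{\min}}\|\xb_t-\ttxb\| + \Upsilon\sqrt{\log(t)/t} + \T/t\big)$, collecting the local Lipschitz term, the noise term, and the distortion term. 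Item 2 follows by noting that $\eta_t$ drops below $1$ (and stays bounded away from it) once the distortion is tamed, i.e.\ once $t \gtrsim \T\kappa$, after which the ratios begin to vanish. Iterating the recursion, the genuine local term is squeezed out rapidly while the distortion term $\T/t$ becomes dominated by the noise term $\Upsilon\sqrt{\log(t)/t}$ precisely when $t \gtrsim \T^2\kappa^2/\Upsilon^2$; from that point $\eta_t$ is of order $\Upsilon\sqrt{\log(t)/t}$, and the telescoping product $\|\xb_t-\ttxb\|_{\ttHb} \le \big(\prod_{s}\eta_s\big)\|\xb_0 - \ttxb\|_{\ttHb}$ of the decaying per-step factors yields a bound of the claimed order $(\Upsilon\sqrt{\log(t)/t})^{t}$, proving item 3.

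The main obstacle I anticipate is the self-consistent control of the distortion term across the phase boundary: bounding $\frac{1}{t+1}\sum_i\|\xb_i - \ttxb\|$ presupposes knowledge of the trajectory, yet the contraction of that trajectory depends on $\tHb_t$ being accurate, which in turn depends on the very same bias. I would resolve this with a bootstrapping induction---assume a crude a priori bound on the iterate errors to obtain a first distortion estimate, feed it into the recursion, and tighten---while conditioning throughout on the single high-probability event furnished by the uniform-in-$t$ martingale concentration.
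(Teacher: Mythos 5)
Your plan follows essentially the same route as the paper's proof: the same decomposition of $\tHb_t-\Hb_t$ into a martingale-difference noise average plus a Hessian drift term, the same matrix Freedman-type concentration under sub-exponential tails (the paper's Theorem \ref{thm:2} and Lemma \ref{lem:3}), the same phase structure (an $O(\Upsilon^2)$ warm-up to make $\tHb_t$ well-conditioned, an $O(\kappa^2)$ damped/Armijo phase to enter $\N_\nu$, then unit steps plus a linear--quadratic recursion), and the same bootstrapping induction to close the circularity between the trajectory and the distortion (this is exactly the induction \eqref{pequ:B7} in the paper's proof of Theorem \ref{thm:3}). One remark on the concentration step: no maximal inequality or dyadic blocking is needed; the paper simply applies a union bound over $t$ with per-iteration failure probabilities $\delta/(t+1)^2$, which are summable, and this already yields the infinite-horizon event \eqref{event:EE} with probability $1-\delta\pi^2/6$.

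There is, however, a genuine gap in your bookkeeping of $\kappa$, and it sits exactly where item 3 of the theorem is decided. Your contraction factor $\eta_t \lesssim \kappa\big(\tfrac{L}{\lambda_{\min}}\|\xb_t-\ttxb\| + \Upsilon\sqrt{\log(t)/t} + \T/t\big)$ puts a factor $\kappa$ in front of the noise term. Taken literally, this yields a limiting rate of order $\kappa\Upsilon\sqrt{\log(t)/t}$ rather than $\Upsilon\sqrt{\log(t)/t}$, and the crossover between your distortion term $\kappa\T/t$ and your noise term $\kappa\Upsilon\sqrt{\log(t)/t}$ occurs at $t\gtrsim \T^2/\Upsilon^2$, not at the claimed $\T^2\kappa^2/\Upsilon^2$; so your stated transition point and your stated final rate cannot both follow from your stated $\eta_t$. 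The theorem as written needs the noise to enter \emph{without} a $\kappa$ factor while each global-phase Hessian contributes $O(\kappa)$. The paper gets this by measuring the Hessian mismatch in relative form, $\|\Ib - \Hb_t^{-1/2}\tHb_t\Hb_t^{-1/2}\|$, and the iterate error in $\|\cdot\|_{\ttHb}$ (Lemma \ref{lem:6}): the noise then contributes $\|\barEb_t\|/\lambda_{\min} = \Upsilon\sqrt{\log(t)/t}$ with no $\kappa$, while each of the $\T$ pre-neighborhood Hessians contributes at most $2\kappa$, giving $\rho_t \approx \T\kappa/t + \Upsilon\sqrt{\log(t)/t}$, hence the crossover at $\T^2\kappa^2/\Upsilon^2$ and the final rate $(\Upsilon\sqrt{\log(t)/t})^t$. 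Your skeleton is right; to complete it you must carry out the local recursion in this relative/weighted geometry (or otherwise track precisely where $\kappa$ enters) rather than extracting a blanket $\kappa$ prefactor.
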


First, we observe that our convergence guarantee holds with high probability for the entire infinite iteration sequence, which addresses the issue of the blow-up of failure probability associated with the existing stochastic Newton methods (see part (b) in Section \ref{subsec:stochastic-newton}). 

Second, the parameter $\Upsilon$ characterizes the noise level of the stochastic~Hessian oracle. When the Hessian is generated by subsampling or sketching, $\Upsilon$ depends on the adopted sample/sketch sizes. As illustrated in Examples \ref{exp:1}~and \ref{exp:2}, $\Upsilon=O(\kappa)$ for popular Hessian estimators when sample/sketch sizes are independent of $\kappa$. In this case, $\T = O(\kappa^2)$. We require $\T \geq O(\Upsilon^2)$ only to~ensure that $\{\tHb_t\}_{t\geq \T}$ are positive definite with condition numbers scaling as $\kappa$, so that the Newton step based on $\tHb_t$ leads to a usual decrease of the objective. On the other hand, popular stochastic Newton methods often generate a larger sample size (which depends on $\kappa$) to enforce $\|\Eb_t\|\leq \epsilon\lambda_{\min}$ for any $t\geq 0$ with an $\epsilon\in(0,1)$ \citep[e.g., see Lemma 2 and Equation 3.10 in][respectively]{RoostaKhorasani2018Sub,Pilanci2017Newton}. In that case, $\{\hHb_t\}_{t\geq 0}$ are positive definite and so are $\{\tHb_t\}_{t\geq 0}$. Importantly, our method does not require such well-conditioned Hessian oracles.

Third, we notice that the uniform averaging approach has three transitions outlined in Theorem \ref{thm:main-1}. For the iterations before $\T$, the error in function value decreases linearly, implying that $\xb_t$ converges $R$-linearly. The \emph{first transition} occurs after $\T$ iterations when $\xb_t$ reaches the local neighborhood,~where~second-order information starts being useful. $\T$ is also where the exact Newton~methods would reach quadratic convergence. However, the averaged Hessian estimates still carry inaccurate Hessian information from the global phase, which is only gradually forgotten. From $\T$ to $O(\T\kappa)$ iterations, the algorithm gradually~forgets the Hessian estimates in the global phase, while $\xb_t$ still converges~$R$-linearly. The \emph{second transition} occurs after $O(\T\kappa)$ iterations, when $\xb_t$ starts converging superlinearly (with a slow rate). The \emph{third transition} occurs after $O(\T^2\kappa^2/\Upsilon^2)$, when the superlinear rate is accelerated to $(\Upsilon\sqrt{\log(t)/t}\,)^{t}$ and stabilized. This rate comes from the central limit theorem of averaging out the oracle noise;~thus, this rate cannot be further improved.

Note that if we were to reset the averaged Hessian estimate $\tHb_t$ after the first transition (so that the Hessian average does not include information~from the global phase), then the algorithm would immediately reach the superlinear rate $(\Upsilon\sqrt{\log(t)/t}\,)^{t}$ after $\T$ iterations (i.e., all transitions would occur at once). However, the algorithm does not know a priori when this transition will occur. As a result, the uniform Hessian averaging incurs a potentially significant delay of up to $O(\T^2\kappa^2/\Upsilon^2)$ iterations before reaching the desired superlinear rate.

In our second main result, we address the delayed transition to superlinear convergence that occurs in the uniform averaging. Specifically, we ask:

\begin{quote}
\begin{em}
Does there exist a universal weighted averaging scheme that achieves superlinear convergence without a delayed transition, and without any prior knowledge about the objective?
\end{em}
\end{quote}

Remarkably, the answer to this question is affirmative. The weighted/non-uniform averaging scheme we present below puts more weight on recent Hessian estimates, so that the second-order information from the global phase is forgotten more quickly, and the transition to a fast superlinear rate occurs after only $O(\T)$ iterations. Thus, the superlinear convergence occurs without any~delay (up to constant factors). Such a scheme will necessarily have a slightly weaker superlinear rate than the uniform averaging as $t\rightarrow\infty$, but we show that this difference is merely an additional $O(\sqrt{\log t})$ factor (see Theorem \ref{thm:4} and Example \ref{exp:5} for a formal statement).

\begin{theorem}[Weighted averaging; informal]\label{thm:main-2}
Consider Algorithm \ref{alg:1} with $w_t=(t+1)^{\log(t+1)}$.
%and suppose that the oracle noise satisfies $\Upsilon \geq 1/\text{poly}(\kappa)$. 
With high probability, after $O(\T) = O(\kappa^2+\Upsilon^2)$ iterations,  $\xb_t$ achieves a superlinear convergence rate $(\Upsilon\log(t)/\sqrt{t}\,)^t$, which is maintained as $t\rightarrow\infty$.
\end{theorem}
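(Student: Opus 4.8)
The plan is to treat this as an instantiation of a general-weight error recursion: the only way the averaged estimate $\tHb_t$ enters the analysis is through its deviation $\tHb_t-\ttHb$ from the Hessian at the optimum, and a standard Newton-type argument converts a bound on this deviation into a per-step superlinear factor. Concretely, I would first establish (as in the global/linear-phase analysis underlying Theorem~\ref{thm:main-1}) that after $\T=O(\kappa^2+\Upsilon^2)$ iterations the averaged Hessians $\{\tHb_t\}_{t\ge\T}$ are positive definite with condition number $O(\kappa)$, so that each step produces the usual Armijo decrease and the iterates reach the local neighborhood of $\ttxb$. Inside this neighborhood I would show a recursion of the schematic form
\begin{equation*}
\|\xb_{t+1}-\ttxb\|_{\ttHb}\;\lesssim\;\Big(\tfrac{\|\tHb_t-\ttHb\|}{\lambda_{\min}}+\|\xb_t-\ttxb\|\Big)\,\|\xb_t-\ttxb\|_{\ttHb},
\end{equation*}
so that the superlinear rate is driven entirely by $\|\tHb_t-\ttHb\|/\lambda_{\min}$ once the quadratic term is negligible.

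The substance specific to $w_t=(t+1)^{\log(t+1)}$ is then the evaluation of two quantities governing $\|\tHb_t-\ttHb\|$. Writing $\tHb_t-\ttHb=\sum_i z_{i,t}(\Hb_i-\ttHb)+\sum_i z_{i,t}\Eb_i$, the first sum is a \emph{bias} from the drift of the true Hessians and the second is a \emph{variance} from the oracle noise. For the variance I would use $w_i-w_{i-1}\approx\frac{2\log(i+1)}{i+1}w_i$ to get $\sum_i z_{i,t}^2=w_t^{-2}\sum_i(w_i-w_{i-1})^2\asymp\log(t)/t$; the effective window contributing to this sum has width $\asymp t/\log t$ near $i=t$, which is exactly the extra $\log t$ (relative to uniform averaging's $1/t$) responsible for the additional $\sqrt{\log t}$ in the final rate. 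Combining with the martingale bound below yields $\|\sum_i z_{i,t}\Eb_i\|\lesssim\Upsilon\lambda_{\min}\,\log(t)/\sqrt{t}$. For the bias I would bound $\|\Hb_i-\ttHb\|$ by the Hessian-Lipschitz constant times $\|\xb_i-\ttxb\|$ and, crucially, control the total weight placed on the global phase through the telescoping identity $\sum_{i\le\T}z_{i,t}=w_\T/w_t=\exp((\log\T)^2-(\log t)^2)$. Evaluated at $t=c\T$ this equals $\T^{-\Theta(\log c)}$, i.e., it decays \emph{super-polynomially} in $\T$, so a constant-factor number of iterations suffices to forget the inaccurate global-phase Hessians—precisely why the transition occurs after $O(\T)$ rather than $O(\T^2\kappa^2/\Upsilon^2)$ iterations.

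To make the variance bound rigorous and uniform over the infinite horizon, I would treat $\{\Eb_i\}$ as a matrix martingale-difference sequence (each $\Eb_i$ is mean-zero conditional on $\mF_{i-1}=\sigma(\xb_i)$) and apply a matrix Freedman/Bernstein inequality to $\sum_i z_{i,t}\Eb_i$, with predictable quadratic variation controlled by $\sum_i z_{i,t}^2$ and sub-exponential tails from Assumption~\ref{ass:3}. Taking per-iteration failure probabilities $\delta_t\asymp t^{-2}$ makes the union bound over all $t$ summable, contributes the $\log(1/\delta_t)\asymp\log t$ factor noted above, and yields a single high-probability event on which the whole sequence obeys the rate; the final product of per-step factors $\prod_s\Upsilon\log(s)/\sqrt{s}$ is then of order $(\Upsilon\log(t)/\sqrt{t})^t$. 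The condition $\Upsilon\ge 1/\mathrm{poly}(\kappa)$ enters exactly when balancing bias against variance at $t=\Theta(\T)$: the residual global-phase bias $\lesssim\kappa\,\T^{-\Theta(\log c)}$ must be dominated by the variance $\asymp\Upsilon\log(\T)/\sqrt{\T}$, which holds for a large enough constant $c$ precisely because $\Upsilon$ is at least inverse-polynomial in $\kappa$ (and $\T\gtrsim\kappa^2$).

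I expect the main obstacle to be the coupling between the iterates and the noise. Because $\Eb_i$ depends on $\xb_i$, which depends on all earlier noise, the concentration must be martingale-based and the predictable quadratic variation $\sum_i z_{i,t}^2\,\mE[\Eb_i^2\mid\mF_{i-1}]$ is itself random; moreover the high-probability Hessian bound is needed to keep the iterates in the neighborhood, while staying in the neighborhood is needed for that bound to apply. I would close this loop by an induction on $t$ that simultaneously maintains (i) the iterate localization $\xb_t\in\N_\nu$, (ii) the spectral bounds on $\tHb_t$, and (iii) the deviation bound $\|\tHb_t-\ttHb\|\lesssim\Upsilon\lambda_{\min}\log(t)/\sqrt{t}$, all on the single good event from the union bound—so that the stochastic estimates never need to be conditioned on future iterates.
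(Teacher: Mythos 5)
Your proposal is correct and follows essentially the same route as the paper: the general weighted-averaging framework (matrix-martingale concentration giving $\sum_i z_{i,t}^2\lesssim w'(t)/w(t)\asymp \log(t)/t$, the bias--variance split of $\tHb_t-\ttHb$, the union bound with $\delta_t\asymp t^{-2}$, and the induction that simultaneously maintains iterate localization, spectral bounds on $\tHb_t$, and the deviation bound) instantiated at $w_t=(t+1)^{\log(t+1)}$, which is exactly Lemmas \ref{lem:7}--\ref{lem:8}, Theorem \ref{thm:4}, and Corollary \ref{cor:3}. Your key computation---that the total weight on the global phase, $w_{\T}/w_t=\exp\big((\log\T)^2-(\log t)^2\big)$, decays super-polynomially in $\T$ at $t=c\T$, so that the residual bias $\lesssim \kappa\,\T^{-\Theta(\log c)}$ is dominated by the noise level $\asymp\Upsilon\log(\T)/\sqrt{\T}$ for a constant $c$ precisely when $\Upsilon\geq 1/\mathrm{poly}(\kappa)$---is the same argument as the paper's Example \ref{exp:5} establishing $\U=O(\I)$ and $\V=O(\I)$.
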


%\begin{remark}
%The above condition on $\Upsilon$ only guards against a corner case where the noise is infinitesimally small (see Example \ref{exp:5}), in which case for all practical purposes we can treat the Hessian oracle as exact and we can use the standard Newton's method.
%\end{remark}

We note that, given some knowledge about the global/local transition~points (e.g., if the algorithm knows $\kappa$, or if some convergence criterion is used for estimating the transition point), it is possible to switch from the more conservative weighted averaging to the asymptotically more effective uniform averaging within one run of the algorithm. However, since knowing transition points is difficult and rare in practice, we leave such considerations to future work, and only focus on problem-independent averaging schemes.

It is also worth mentioning that this paper only considers a basic stochastic Newton scheme based on \eqref{equ:Newton}, where we suppose exact function and gradient information and solutions to the Newton systems are known exactly. Some literature allows one to access inexact function values and/or gradients, and/or apply Newton-CG or MINRES to solve the linear systems inexactly \citep{Fong2012CG, RoostaKhorasani2018Sub, Liu2021Convergence, Yao2021Inexact}. Applying our sample aggregation technique under these setups is~promising, but we defer it to future work. The basic scheme purely reflects the benefits of Hessian averaging, which is the main interest of this work. Additionally, some literature deals with non-convex objectives via stochastic trust region methods \citep{Chen2017Stochastic, Blanchet2019Convergence} or stochastic Levenberg-Marquardt methods \citep{Ma2019Globally}. The averaging scheme may not directly apply for these methods due to potential bias brought by Hessian modifications for addressing non-convexity, while our sample aggregation idea is still inspiring. We leave the generalization to non-convex objectives to future work as well. Some literature addressed the superlinearity of stochastic Newton methods under distributed or federated learning settings \citep{Islamov2021Distributed, Safaryan2021FedNL, Qian2022Basis}. These works are not fully compatible with our Hessian oracle framework, since they exploit some distributed nature of problem to produce Hessian estimates with noise diminishing to zero (as opposed to the bounded noise in this paper).

\subsection{Literature review}
\label{subsec:literature}

\vskip-0.14cm 
Stochastic Newton methods have recently received much attention. The popular Hessian approximation methods include subsampling and sketching.  

For subsampled Newton methods, aside from extensive empirical studies~on different problems \citep{Martens2010Deep, Martens2011Learning, Kylasa2019GPU, Xu2020Second}, the pioneering work in \cite{Byrd2011Use} established~the very first asymptotic global convergence by showing that $\|\nabla f_t\|\rightarrow 0$~as $t\rightarrow \infty$, while the quantitative rate is unknown. Furthermore, \cite{Byrd2012Sample, Friedlander2012Hybrid} studied~Newton-type algorithms with subsampled gradients and/or subsampled Hessians, and established global $Q$-linear convergence in the error of function value $f(\xb_t)-f(\tx)$. However, the above analyses neglected the underlying~probabilistic~nature of 
the subsampled Hessian $\hHb_t$, and required $\hHb_t$ to be lower bounded away~from zero \textit{deterministically}. Such a condition holds only if each $f_i$ in \eqref{equ:SSN:obj} is strongly convex, which is restrictive in general. \cite{Erdogdu2015Convergence} relaxed such a condition by developing a novel algorithm, where the subsampled Hessian is adjusted by a truncated eigenvalue decomposition. With the exact gradient information and properly prespecified stepsizes, the authors showed a linear-quadratic error recursion for $\|\xb_t-\tx\|$ in high probability. Arguably, the convergence of standard subsampled Newton methods is originally analyzed in \cite{RoostaKhorasani2018Sub} and \cite{Bollapragada2018Exact} from different perspectives. In particular, for both sampling and not sampling the gradient, \cite{RoostaKhorasani2018Sub} showed a global~$Q$-linear convergence for $f(\xb_t) - f(\tx)$ and a local linear-quadratic convergence for $\|\xb_t-\tx\|$ in high probability. Under some additional conditions, \cite{Bollapragada2018Exact} derived a global $R$-linear convergence for the expected function value $\mE[f(\xb_t) - f(\tx)]$ and a (similar) local~linear-quadratic convergence for the expected iterate error $\mE[\|\xb_t-\tx\|]$. For both works, the authors also discussed how to gradually increase the sample size for Hessian approximation to achieve a local $Q$-superlinear convergence with high probability and in expectation, respectively. Building on the two studies, various~modifications of subsampled Newton methods have been reported with similar convergence guarantees. We refer to \cite{Xu2016Sub,Ye2017Approximate, Bellavia2019Subsampled, Li2020Do} and references therein. We note that \cite{Kovalev2019Stochastic} designed a scheme that allows for a single sample in each iteration of subsampled Newton. That work established a local linear convergence in expectation, while we obtain a superlinear convergence in high probability.

As a parallel approach to subsampling, Newton sketch has also been broadly investigated. \cite{Pilanci2017Newton} proposed a generic Newton sketch method that approximates the Hessian via a Johnson–Lindenstrauss (JL) transform (e.g., the Hadamard transform), and the gradient is exact. Furthermore, \cite{Agarwal2017Second, Derezinski2019Distributed,  Derezinski2020Debiasing, Derezinski2020Precise} proposed different Newton sketch methods with debiased or unbiased  Hessian inverse approximations. \cite{Derezinski2021Newton} relied on a novel sketching technique called Leverage~Score Sparsified (LESS) embeddings \citep{Derezinski2021Sparse} to construct a sparse sketch~matrix, and~studied the trade-off between the computational cost of $\hHb_t$ and the convergence rate of the algorithm. Similar to subsampled Newton methods, the aforementioned literature established a local linear-quadratic (or linear) recursion for $\|\xb_t-\ttxb\|$ in high probability. A recent work \cite{Lacotte2021Adaptive} adaptively increased the sketch size to let the linear coefficient be proportional to the iterate error, which leads to a quadratic convergence. However, the per-iteration computational cost is larger than typical methods. See \cite{Berahas2020investigation} for a review of subsampled and sketched Newton, and their connections to, and empirical comparisons with, first-order~methods.

Sketching has also been used to construct \emph{low-rank} Hessian approximations through the Sketch-and-Project framework, originally developed by \cite{Gower2015Randomized} for~solving linear systems, and extended to general convex/nonconvex optimization by \cite{Luo2016Efficient, Doikov2018Randomized, Gower2019RSN, Na2022Asymptotic}. The convergence properties of this family of~methods~have~been thoroughly studied: they achieve linear convergence in~expectation, with the~rate controlled by a so-called \emph{stochastic condition number}, which is defined as the smallest eigenvalue of the expectation of the low-rank projection matrix defined by the sketch \citep{Gower2015Randomized}. While the per-iteration cost of stochastic Newton methods based on Sketch-and-Project is generally lower than that of the aforementioned Newton sketch methods, their convergence rates are more sensitive to the spectral properties of the Hessian. The precise characterizations of the convergence rates are given in \cite{Mutny2020Convergence,Derezinski2020Precise, Derezinski2022Sharp}. Moreover, the Sketch-and-Project estimates are generally biased, so they are not appropriate for averaging.

In summary, none of the aforementioned existing works achieve superlinear convergence with a fixed per-iteration computational cost. Additionally, high probability convergence guarantees generally fail as $t\rightarrow\infty$, with potent~exceptions of certain stochastic trust-region methods \citep{Chen2017Stochastic} that enjoy almost sure convergence. However, the per-iteration computation of the exceptions~is~not~fixed and the local rate is unknown. Further, for finite-sum~objectives, the existing~literature on stochastic Newton methods assumes each $f_i$ to be strongly convex~\citep{Bollapragada2018Exact} or convex \citep{RoostaKhorasani2018Sub}. However, $f_i$ needs not be convex even if $f$ is strongly convex.~See \cite{Garber2015Fast, Garber2016Faster, ShalevShwartz2016SDCA, AllenZhu2016Improved} and references therein for first-order algorithms designed under such a setting. We address the above limitations of stochastic Newton methods by reusing all the past samples to average Hessian estimates. Our scheme is especially preferable when we have a limited budget for per-iteration computation (e.g., when we use very few samples in subsampled Newton, resulting in an ill-conditioned Hessian estimate). Our established non-asymptotic superlinear rates are stronger than the existing results, and our numerical experiments demonstrate the superiority of Hessian averaging.

\vskip2pt
\noindent{\bf Notation}: Throughout the paper, we use $\Ib$ to denote the identity matrix,~and~$\0$ to denote the zero vector or matrix. Their dimensions are clear from the~context. We use $\|\cdot\|$ to denote the $\ell_2$ norm for vectors and spectral norm for matrices.  For a positive semidefinite matrix $\Ab$, we let $\|\xb\|_{\Ab} = \sqrt{\xb^\top\Ab\xb}$. For two scalars $a, b$, $a\vee b = \max(a,b)$ and $a\wedge b = \min(a, b)$. For two matrices $\Ab, \Bb$, $\Ab \prec (\preceq)\Bb$ if $\Bb - \Ab$ is a positive (semi)definite matrix. Recall that we reserve the notation $\lambda_{\min}, \lambda_{\max}$ to denote the lower and upper bounds of the true~Hessian, and~$\kappa = \lambda_{\max}/\lambda_{\min}$ is the condition number.

\vskip3pt
\noindent{\bf Structure of the paper}: In Section \ref{sec:2} we present the preliminaries on matrix concentration that are needed for our results. Then, we establish convergence results for the uniform averaging scheme in Section \ref{sec:3}. Section \ref{sec:4} establishes convergence for general weight sequences. Numerical experiments and conclusions are provided in Sections \ref{sec:5} and \ref{sec:6}, respectively.

\section{Preliminaries on Matrix Concentration}\label{sec:2}

In this section, we present the key results on the concentration of sums of random matrices, which we then use to bound the noise of the averaged~\mbox{Hessian}~estimates. The Hessian estimates constructed by our algorithm are not independent, and hence standard matrix concentration results do not apply. However, they~do satisfy a martingale difference condition, which we will exploit to derive useful concentration results.

Given a sequence  of stochastic iterates $\{\xb_t\}_{t=0}^{\infty}$, we let $\mF_0\subseteq\mF_1\subseteq\mF_2\subseteq\ldots$ be a filtration where $\mF_t = \sigma(\xb_{0:t})$, $\forall t\geq 0$, is the $\sigma$-algebra~generated~by~the randomness from $\xb_0$ to $\xb_t$. With such a filtration, we denote the~conditional~expectation by~$\mE_t[\cdot] = \mE[\cdot \mid \mF_t]$ and the conditional probability by $\mP_t(\cdot) = \mP(\cdot \mid \mF_t)$. We suppose $\xb_0$ is deterministic, so that $\mF_0$ is a trivial $\sigma$-algebra.

For a given weight sequence $\{w_t\}_{t=-1}^\infty$ with $w_{-1}=0$, the scheme \eqref{equ:ave:Hessian} leads~to \vskip-0.4cm
\begin{equation}\label{equ:simple}
\tHb_t\;\; \stackrel{\mathclap{\eqref{equ:z}}}{=}\; \sum_{i=0}^{t}z_{i,t}\hHb_i \ \stackrel{\mathclap{\eqref{equ:error}}}{=} \!\!\underbrace{\sum_{i=0}^{t}z_{i,t}\Hb_i}_{\text{Hessian averaging}}\!\!+\!\!\quad\barEb_t\qquad\quad\text{for} \quad \barEb_t\coloneqq\!\!\underbrace{\sum_{i=0}^{t}z_{i,t}\Eb_i }_{\text{noise averaging}}
\end{equation}\vskip-0.1cm
\noindent where $z_{i,t} = (w_i - w_{i-1})/w_t$. Note that $\sum_{i=0}^t z_{i,t}=1$ and $z_{i,t}\propto z_i\coloneqq w_i-w_{i-1}$, i.e., $z_{i,t}$ is proportional to an un-normalized weight $z_i$. We see from \eqref{equ:simple} that~$\tHb_t$ consists of the Hessian averaging and noise averaging with the same weights. In principle, the Hessian averaging $\sum_{i=0}^{t}z_{i,t}\Hb_i\rightarrow \ttHb$ as $\xb_t\rightarrow \tx$, while the noise averaging $\sum_{i=0}^{t}z_{i,t}\Eb_i\rightarrow \0$ due to the central limit~theorem. We will show that the Hessian averaging (eventually) converges faster than the noise averaging.

To study the concentration of noise averaging $\barEb_t$, we use the fact that~$\{\Eb_t\}_{t=0}^\infty$ is a martingale difference sequence, and rely on concentration inequalities for matrix martingales. These concentration inequalities require a sub-exponential tail condition on the noise.  We say that a random variable $X$ is $K$-sub-exponential if $\mE[|X|^p]\leq p!\cdot K^p/2$ for all $p=2,3,...\,$, which is consistent (up to constants) with all standard notions of sub-exponentiality \citep[see Section 2.7 in][]{Vershynin2018High}.

\begin{assumption}[Sub-exponential noise]\label{ass:3}
We assume that  $\Eb(\xb)$ is mean zero and $\|\Eb(\xb)\|$ is $\Upsilon_E$-sub-exponential for all $\xb$. Also, we define $\Upsilon\coloneqq\Upsilon_E/\lambda_{\min}$ to be the scale-invariant noise level. 
\end{assumption}

\begin{remark}\label{rem:1}
The sub-exponentiality of $\|\Eb(\xb)\|$ implies that $\Eb(\xb)$ has sub-exponential matrix moments: $\mE[\Eb(\xb)^p]\preceq p!\cdot\Upsilon_E^p/2\cdot\Ib$ for $p=2,3,...\,$. In fact, our analysis immediately applies under this slightly weaker condition. We impose the moment condition on $\|\Eb(\xb)\|$ purely because it is easier to check in practice. Also, we note that sometimes the noise $\Hb(\xb)^{-1/2}\Eb(\xb)\Hb(\xb)^{-1/2}$ is more natural to study, e.g., for sketching-based oracles where we additionally have $\hHb(\xb)\succeq\0$. Thus, we can alternatively impose $\tilde{\Upsilon}$-sub-exponentiality on $\|\Hb(\xb)^{-1/2}\Eb(\xb)\Hb(\xb)^{-1/2}\|$. Our analysis can also be adapted to this alternate condition, and leads to~tighter convergence rates (in terms of the dependence on $\kappa$) for particular sketching-based oracles. However, this adaptation loses certain generality, and thus we prefer to impose conditions directly on the oracle noise $\Eb(\xb)$.

\end{remark}

Assumption \ref{ass:3} is weaker than assuming $\|\Eb(\xb)\|$ to be uniformly bounded by $\Upsilon_E$, and it is~satisfied by all of the popular Hessian subsampling and sketching methods. For example, when using Gaussian sketching, the noise is not bounded but sub-exponential. Further, the sub-exponential constant~$\Upsilon_E$~(and hence $\Upsilon$) reflects how the stochastic noise depends on the sample/sketch size, as illustrated in the examples below (see Appendix \ref{a:examples} for rigorous proofs).

\begin{example}\label{exp:1}
Consider subsampled Newton as in \eqref{equ:SSN} with sample size $s=|\xi_t|$,~and suppose $\|\nabla^2f_i(\xb)\|\leq \lambda_{\max}R$ for some $R>0$ and for all $i$. Then, we have~$\Upsilon = O(\kappa R \sqrt{\log(d)/s})$. If we additionally assume that all $f_i(\xb)$ are convex, then $\Upsilon$ is improved to $\Upsilon = O(\sqrt{\kappa R\log(d)/s}+\kappa R\log(d)/s\,)$.
\end{example}

\begin{example}\label{exp:2}
Consider Newton sketch as in \eqref{equ:SKN} with $\Sb\in \mR^{s\times n}$ consisting of i.i.d. $\mathcal N(0,1/s)$ entries. Then, we have $\Upsilon = O(\kappa(\sqrt{d/s}+ d/s))$.
\end{example}

From the above two examples, we observe that $\Upsilon$ scales as $O(\kappa)$ when holding everything else fixed. Also, Example \ref{exp:1} illustrates that our Hessian~oracle model applies to subsampled Newton even when some components $f_i(\xb)$~are~non-convex (while $f(\xb)$ is still convex), although this adversely affects the sub-exponential constant. For Gaussian sketch in Example \ref{exp:2}, we can show that $\widetilde\Upsilon=O(\sqrt{d/s}+ d/s)$ (where $\widetilde\Upsilon$ was defined in Remark~\ref{rem:1}). Thus, the dependence on $\kappa$ can be avoided for the~sub-exponential constant of noise~$\Hb(\xb)^{-1/2}\Eb(\xb)\Hb(\xb)^{-1/2}$. Analogous noise bounds can be proved for other sketching matrices $\Sb$, including sparse sketches and randomized orthogonal transforms.

We now show concentration inequalities for $\barEb_t$ in \eqref{equ:simple} under Assumption~\ref{ass:3}. We state the following preliminary lemma, which is a variant of Freedman's inequality for matrix martingales.

\begin{lemma}[Adapted from Theorem 2.3 in \cite{Tropp2011Freedmans}]\label{lem:2}
Let $t\geq 0$ be a fixed integer. Consider a $d$-dimensional martingale difference $\{\Eb_i\}_{i=0}^{t}$ (i.e., $\mE_i[\Eb_i] = \0$). Suppose there exists a function $g_t:\Theta_t\rightarrow [0, \infty]$ with $\Theta_t\subseteq (0, \infty)$, and~a~sequence of matrices $\{\Ub_i\}_{i=0}^{t}$, such that for any $i = 0,1,\ldots,t$,\footnote{The matrix exponential is defined by power series expansion: $\exp(\Ab) = \Ib + \sum_{i=1}^{\infty} \Ab^i/ i!$.}
\begin{equation}\label{equ:exp:cond}
\mE_i\sbr{\exp\rbr{\theta\Eb_i}}\preceq \exp\rbr{g_t(\theta)\Ub_i} \quad \text{ almost surely for each } \theta\in \Theta_t.
\end{equation}
Then, we have for any scalars $\eta \geq 0$ and $\sigma^2>0$,
\begin{equation*}
\mP\rbr{\Big\|\sum_{i=0}^{t}\Eb_i\Big\| \geq \eta\; \text{ and } \; \Big\|\sum_{i=0}^{t}\Ub_i\Big\|\leq \sigma^2} \leq 2d\cdot \inf_{\theta\in \Theta_t}\exp\rbr{-\theta \eta + g_t(\theta)\sigma^2}.
\end{equation*}		
\end{lemma}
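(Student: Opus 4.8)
The plan is to run the matrix Laplace-transform argument (in the Ahlswede--Winter--Tropp style), upgraded from the independent to the martingale setting: independence is replaced by the conditional moment-generating-function bound \eqref{equ:exp:cond}, and Lieb's concavity theorem is used to pass the conditional expectation through the trace exponential. Throughout write $\Sb_k=\sum_{i=0}^k\Eb_i$ and $\Wb_k=\sum_{i=0}^k\Ub_i$.

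First I would reduce the spectral-norm tail to a one-sided eigenvalue tail. Since each $\Eb_i$ is symmetric, $\|\Sb_t\|=\lambda_{\max}(\Sb_t)\vee\lambda_{\max}(-\Sb_t)$, so a union bound gives that $\mP(\|\Sb_t\|\ge\eta,\ \|\Wb_t\|\le\sigma^2)$ is at most the sum of $\mP(\lambda_{\max}(\Sb_t)\ge\eta,\ \|\Wb_t\|\le\sigma^2)$ and $\mP(\lambda_{\max}(-\Sb_t)\ge\eta,\ \|\Wb_t\|\le\sigma^2)$. The sequence $\{-\Eb_i\}$ obeys the same hypothesis \eqref{equ:exp:cond} with the same $g_t$ and $\Ub_i$ (the conditional cgf bound we use for mean-zero noise is symmetric in the sign of $\Eb_i$), so the two terms satisfy identical estimates, and it suffices to bound the first; this produces the final factor $2$.

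Next I would build the key supermartingale. Fix $\theta\in\Theta_t$ and set $\Gb_k=\theta\Sb_k-g_t(\theta)\Wb_k$, with $\Gb_{-1}=\0$; I claim $M_k\coloneqq\mathrm{tr}\exp(\Gb_k)$ is a supermartingale. Write $\Gb_k=\Cb_k+\theta\Eb_k$, where $\Cb_k\coloneqq\Gb_{k-1}-g_t(\theta)\Ub_k$ is $\mF_k$-measurable (the $\Ub_k$ are predictable), so $\Cb_k$ may be treated as fixed under $\mE_k$. Lieb's theorem — the concavity of $\Ab\mapsto\mathrm{tr}\exp(\Cb+\log\Ab)$ on positive definite $\Ab$ — combined with Jensen's inequality gives $\mE_k[\mathrm{tr}\exp(\Cb_k+\log\exp(\theta\Eb_k))]\le\mathrm{tr}\exp(\Cb_k+\log\mE_k[\exp(\theta\Eb_k)])$. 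Applying \eqref{equ:exp:cond}, namely $\mE_k[\exp(\theta\Eb_k)]\preceq\exp(g_t(\theta)\Ub_k)$, together with operator monotonicity of $\log$ and semidefinite monotonicity of $\Ab\mapsto\mathrm{tr}\exp(\Cb_k+\Ab)$, bounds the right-hand side by $\mathrm{tr}\exp(\Cb_k+g_t(\theta)\Ub_k)=\mathrm{tr}\exp(\Gb_{k-1})=M_{k-1}$. Iterating the tower property yields $\mE[M_t]\le M_{-1}=\mathrm{tr}\exp(\0)=d$. To finish, note that on the event $\{\lambda_{\max}(\Sb_t)\ge\eta,\ \|\Wb_t\|\le\sigma^2\}$ we have $\Wb_t\preceq\sigma^2\Ib$ (since $\lambda_{\max}(\Wb_t)\le\|\Wb_t\|\le\sigma^2$), hence $\Gb_t\succeq\theta\Sb_t-g_t(\theta)\sigma^2\Ib$ because $g_t(\theta)\ge0$; using $\mathrm{tr}\exp(\Ab)\ge e^{\lambda_{\max}(\Ab)}$ gives $M_t\ge\exp(\theta\eta-g_t(\theta)\sigma^2)$ on that event. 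Markov's inequality applied to the nonnegative $M_t$ then gives $\mP(\lambda_{\max}(\Sb_t)\ge\eta,\ \|\Wb_t\|\le\sigma^2)\le \mE[M_t]\,e^{-\theta\eta+g_t(\theta)\sigma^2}\le d\,e^{-\theta\eta+g_t(\theta)\sigma^2}$; taking the infimum over $\theta\in\Theta_t$ and adding the mirror-image bound yields the claimed $2d\inf_{\theta}\exp(-\theta\eta+g_t(\theta)\sigma^2)$.

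The main obstacle — and the only genuinely nonroutine step — is the supermartingale claim: independence is unavailable, so moment-generating functions cannot simply be multiplied, and non-commutativity of the $\Eb_i$ forbids naively splitting $\exp(\Gb_{k-1}+\theta\Eb_k)$. Lieb's concavity theorem is precisely what rescues the argument, letting Jensen move $\mE_k$ inside the trace exponential against a fixed $\Cb_k$. Everything else (semidefinite monotonicity of $\mathrm{tr}\exp$, operator monotonicity of $\log$, the elementary eigenvalue/trace inequalities, and the Markov and union-bound bookkeeping) is routine. The only point demanding care is that \eqref{equ:exp:cond} is a genuinely conditional statement given $\mF_k$ and that each $\Ub_k$ is $\mF_k$-measurable, so that $\Cb_k$ is legitimately constant under $\mE_k[\cdot]$ when Jensen is applied.
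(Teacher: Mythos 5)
Your proof is correct and is essentially the argument behind the result the paper relies on: the paper gives no proof of Lemma~\ref{lem:2}, deferring to Theorem 2.3 of Tropp (2011), whose proof is exactly the Lieb-concavity/trace-exponential supermartingale argument you reconstruct (Tropp additionally uses a stopping-time argument to get a bound uniform over all partial sums, which you do not need for a fixed terminal index $t$; and restricting $\theta$ to a subset $\Theta_t$ changes nothing, as the paper itself remarks). One point deserves care: Tropp's master bound is one-sided, controlling $\lambda_{\max}$ of the sum, and your factor-of-$2$ symmetrization requires the hypothesis \eqref{equ:exp:cond} to hold for $\{-\Eb_i\}$ as well, which does \emph{not} follow from the hypothesis as literally stated --- a mean-zero random matrix can have a bounded conditional mgf for $\theta>0$ yet an infinite one at $-\theta$, in which case no two-sided tail bound of this form can hold. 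Your parenthetical justification is really a statement about how the lemma is used rather than about its stated hypothesis: in the paper's application (proof of Theorem~\ref{thm:2}) the verification goes through the moment bound $\mE_i[(\pm\Eb_i)^j]\preceq \mE_i[\|\Eb_i\|^j]\,\Ib$, which is sign-symmetric, so both one-sided applications are legitimate there. This imprecision is inherited from the paper's own statement of the lemma (a two-sided conclusion drawn from a nominally one-sided hypothesis), not a flaw introduced by your argument, but a fully rigorous write-up should state explicitly that \eqref{equ:exp:cond} is assumed for both $\Eb_i$ and $-\Eb_i$.
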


The function $g_t$ in \cite[Theorem 2.3]{Tropp2011Freedmans} is defined on the full positive~set $(0, \infty)$, but the proof applies to any subset $\Theta_t$. We use Lemma \ref{lem:2} to show~the~next~result.

\begin{theorem}[Concentration of sub-exponential martingale difference]\label{thm:2}
Under Assumption \ref{ass:3}, for any integer $t\geq 0$ and scalar $\eta\geq 0$, $\barEb_t$ in \eqref{equ:simple} satisfies
\begin{equation}\label{equ:subE}
\mP\rbr{\nbr{\barEb_t} \geq \eta} \leq 2d\cdot\exp\rbr{-\frac{\eta^2/2}{\Upsilon_E^2\sum_{i=0}^{t}z_{i,t}^2 + z_t^{(\max)}\Upsilon_E\eta } }
\end{equation}
where $z_t^{(\max)} = \max_{i\in \{0,\ldots,t\}}z_{i,t}$.
\end{theorem}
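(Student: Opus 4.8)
The plan is to apply the matrix Freedman inequality of Lemma \ref{lem:2} to the rescaled sequence $\tilde\Eb_i\coloneqq z_{i,t}\Eb_i$ for $i=0,\dots,t$, whose sum is exactly $\barEb_t=\sum_{i=0}^t z_{i,t}\Eb_i$. Since the weights $z_{i,t}=(w_i-w_{i-1})/w_t$ are deterministic and $\mE_i[\Eb_i]=\0$, each $\tilde\Eb_i$ is a genuine martingale difference, so Lemma \ref{lem:2} is applicable once I supply a valid exponential-moment domination.

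The core step is to verify condition \eqref{equ:exp:cond}. Using the moment bound recorded in Remark \ref{rem:1}, namely $\mE_i[\Eb_i^p]\preceq p!\,\Upsilon_E^p/2\cdot\Ib$ for $p\ge 2$, together with $\mE_i[\Eb_i]=\0$, I would expand the matrix exponential as a power series and take conditional expectations termwise. The zeroth- and first-order terms give $\Ib$ (the linear term vanishes by mean-zero), and the remaining tail is dominated by a geometric series provided $\theta z_{i,t}\Upsilon_E<1$. Concretely, restricting to $\theta\in\Theta_t\coloneqq(0,1/(z_t^{(\max)}\Upsilon_E))$ and using $1+y\le e^y$,
\[
\mE_i\sbr{\exp(\theta\tilde\Eb_i)}\preceq\rbr{1+\frac{(\theta z_{i,t}\Upsilon_E)^2}{2(1-\theta z_{i,t}\Upsilon_E)}}\Ib\preceq\exp\rbr{g_t(\theta)\Ub_i},
\]
where I choose $\Ub_i=z_{i,t}^2\Ib$ to carry the variance structure and set $g_t(\theta)=\theta^2\Upsilon_E^2/\big(2(1-\theta z_t^{(\max)}\Upsilon_E)\big)$, replacing $z_{i,t}$ by the worst case $z_t^{(\max)}$ in the denominator so that $g_t$ is uniform in $i$ (this is valid since $z_{i,t}\le z_t^{(\max)}$ makes the denominator larger and the fraction smaller).

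With $\Ub_i=z_{i,t}^2\Ib$, the sum $\sum_{i=0}^t\Ub_i=\big(\sum_{i=0}^t z_{i,t}^2\big)\Ib$ is deterministic, so taking $\sigma^2=\sum_{i=0}^t z_{i,t}^2$ makes the variance-proxy event $\{\|\sum_i\Ub_i\|\le\sigma^2\}$ hold with probability one. Lemma \ref{lem:2} then gives $\mP(\|\barEb_t\|\ge\eta)\le 2d\,\inf_{\theta\in\Theta_t}\exp(-\theta\eta+g_t(\theta)\sigma^2)$. I would finish by the standard Bernstein choice $\theta=\eta/(\Upsilon_E^2\sigma^2+z_t^{(\max)}\Upsilon_E\eta)$, which lies in $\Theta_t$ and, after substituting, collapses $-\theta\eta+g_t(\theta)\sigma^2$ to $-\tfrac12\eta^2/(\Upsilon_E^2\sigma^2+z_t^{(\max)}\Upsilon_E\eta)$, yielding precisely \eqref{equ:subE}.

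The main obstacle is the exponential-moment step: one must split the roles of the weights correctly, putting $z_{i,t}^2$ into the matrix variance proxy $\Ub_i$ while isolating $z_t^{(\max)}$ into the scalar $g_t$ and the domain $\Theta_t$, so that the per-term bound is simultaneously tight in the variance and uniform in $i$. Engineering the geometric-series tail and the $1+y\le e^y$ relaxation to produce exactly a Bernstein-type denominator is what makes the final scalar optimization over $\theta$ close the argument in the stated clean form.
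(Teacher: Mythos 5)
Your proposal is correct and follows essentially the same route as the paper's proof: the same power-series/geometric-series verification of condition \eqref{equ:exp:cond} with the worst-case replacement $z_{i,t}\to z_t^{(\max)}$ in the denominator, the same domain $\Theta_t$, and the same Bernstein-type choice of $\theta$. The only difference is cosmetic bookkeeping---you place the factor $\Upsilon_E^2$ in $g_t(\theta)$ while the paper places it in $\Ub_i$---which is immaterial since only the products $g_t(\theta)\Ub_i$ and $g_t(\theta)\sigma^2$ enter the bound.
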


\begin{proof}
See Appendix \ref{pf:thm:2}.
\end{proof}

The martingale concentration in Theorem \ref{thm:2} matches the matrix Bernstein results for independent noises $\{\Eb_i\}_{i=0}^t$ \citep[cf. Theorems 6.1, 6.2 in][]{Tropp2011User}. For any~$\delta\in(0, 1)$, if we let the right hand side of \eqref{equ:subE} be $\delta/(t+1)^2$, then we obtain that, with probability at least $1 - \delta/(t+1)^2$,  
\begin{equation}\label{equ:sub:concen}
\small \nbr{\barEb_t} \leq 8\Upsilon_E\sqrt{\log\rbr{\frac{d(t+1)}{\delta}}}\rbr{\sqrt{\sum_{i=0}^{t}z_{i,t}^2} \; \vee \; \sqrt{\log\rbr{\frac{d(t+1)}{\delta}}} \cdot z_t^{(\max)}}.
\end{equation}
We provide the following remark to discuss the fastest concentration rate.

\begin{remark}\label{rem:2}
To achieve the fastest concentration rate, we minimize the right~hand side of \eqref{equ:sub:concen} under the~restriction $\sum_{i=0}^t z_{i,t} = 1$. Note that the minimum of both $\sum_{i=0}^t z_{i,t}^2$ and $z_t^{(\max)}$ is attained with equal weights, that is $z_{i,t} = 1/(t+1)$, for $ i = 0,1,\ldots, t$. Thus, the fastest concentration rate is attained~with~equal~weights. Furthermore, a union bound over $t$ leads to
\begin{equation}\label{equ:equal:con}
\mP\rbr{\forall t:\ \nbr{\barEb_t} \leq
8\Upsilon_E\bigg(\sqrt{\frac{\log(d(t+1)/\delta)}{t+1}} \vee
  \frac{\log(d(t+1)/\delta)}{t+1}\bigg)} 
\geq 1 - \sum_{t=0}^{\infty}
\frac{\delta}{(t+1)^2}= 1-\frac{\pi^2\delta}{6}.
\end{equation}
We note that the square root term $\sqrt{\log(d(t+1)/\delta)/t+1}$ dominates the error bound for large $t$. Recalling from \eqref{equ:simple} that $z_{i,t} = (w_i - w_{i-1})/w_t$, we~know~$w_t = t+1$ for the equal weights. If fact, the concentration rate of $\|\barEb_t\|$ relates to~the superlinear convergence rate of $\xb_t$ (see Theorem \ref{thm:main-1}), because, as shown in the following sections, the convergence rate of $\xb_t$ is proportional to $\|\barEb_t\|$ when $\xb_t$ is sufficiently close to $\tx$.
\end{remark}

\section{Convergence of Uniform Hessian Averaging}\label{sec:3}

We now study the convergence of stochastic Newton with Hessian averaging. We consider the uniform averaging scheme, i.e., $w_t = t+1$, $\forall t\geq0$. Our first result suggests that, with high probability, $\tHb_t \succ \0$ for all large $t$. This implies that the Newton direction $\pb_t = -(\tHb_t)^{-1}\nabla f_t$ will be employed from some $t$ onwards (cf. Line 6 of Algorithm \ref{alg:1}). We recall that $\Upsilon = \Upsilon_E/\lambda_{\min}$, and $e$ denotes the natural base.

\begin{lemma}\label{lem:3}
Consider Algorithm \ref{alg:1} with $w_t = t+1$, $\forall t\geq 0$. Under Assumption~\ref{ass:3}, we let $\delta, \epsilon\in(0, 1)$ with $d/\delta\geq e$. We also let
\begin{equation}\label{T1}
\T_1 =4\rbr{1\vee (8\Upsilon/\epsilon)}^2 \log\rbr{d/\delta\cdot\cbr{1\vee(8\Upsilon/\epsilon)}}.
\end{equation}
Then, with probability $1-\delta\pi^2/6$, the event
\begin{equation}\label{event:EE}
\EE = \bigcap_{t=\T_1}^{\infty}\cbr{\|\barEb_t\| \leq 8\Upsilon_E\sqrt{\frac{\log(d(t+1)/\delta)}{t+1}}}
\end{equation}
occurs, which implies $(1-\epsilon)\lambda_{\min}\cdot\Ib\preceq \tHb_t \preceq (1+\epsilon)\lambda_{\max}\cdot\Ib$, $\forall t\geq \T_1$.
\end{lemma}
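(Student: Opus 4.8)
The plan is to prove two essentially independent things: (i) the event $\EE$ in \eqref{event:EE} holds with probability at least $1-\delta\pi^2/6$, which is a martingale-concentration statement; and (ii) on $\EE$ the stated two-sided bound on $\tHb_t$ holds for all $t\ge\T_1$, which is a deterministic consequence of the noise bound together with the definition \eqref{T1} of $\T_1$.

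For part (i), I would specialize the general concentration bound \eqref{equ:sub:concen} (which follows from Theorem \ref{thm:2}) to uniform averaging. With $w_t=t+1$ we have $z_{i,t}=1/(t+1)$ for every $i$, so $\sum_{i=0}^t z_{i,t}^2 = z_t^{(\max)} = 1/(t+1)$. Substituting these into \eqref{equ:sub:concen} with per-step failure probability $\delta/(t+1)^2$ gives, for each fixed $t$ and with probability $\ge 1-\delta/(t+1)^2$,
\[
\nbr{\barEb_t}\le 8\Upsilon_E\sqrt{\log\rbr{\tfrac{d(t+1)}{\delta}}}\rbr{\sqrt{\tfrac{1}{t+1}}\ \vee\ \sqrt{\log\rbr{\tfrac{d(t+1)}{\delta}}}\cdot\tfrac{1}{t+1}}.
\]
I would then check that for $t\ge\T_1$ the square-root term dominates the maximum, which is equivalent to $\log(d(t+1)/\delta)\le t+1$; this follows from a short scalar estimate (using $\log(d/\delta)\ge 1$, which holds since $d/\delta\ge e$, and $\log n\le n/2$), and reduces the bound to exactly the one appearing in $\EE$. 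A union bound over $t=\T_1,\T_1+1,\dots$, using $\sum_{t\ge 0}\delta/(t+1)^2=\pi^2\delta/6$, then yields $\mP(\EE)\ge 1-\delta\pi^2/6$.

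For part (ii), I would argue deterministically on $\EE$. The crux is to show that the noise bound forces $\nbr{\barEb_t}\le\epsilon\lambda_{\min}$ for $t\ge\T_1$. Writing $\Upsilon_E=\Upsilon\lambda_{\min}$ and setting $c=1\vee(8\Upsilon/\epsilon)$, this reduces to the scalar inequality $\tfrac{t+1}{\log(d(t+1)/\delta)}\ge c^2$ for all $t\ge\T_1=4c^2\log(cd/\delta)$. This is precisely what the constant in \eqref{T1} is engineered to guarantee, and I expect it to be the main obstacle: it requires the monotonicity of $n\mapsto n/\log n$ on $n\ge e$ together with careful bookkeeping of the iterated logarithm $\log\log(cd/\delta)$ that appears when $n/\log n$ is evaluated at $n=\T_1$, with the factor $4$ and the choice of $c$ providing exactly the needed slack. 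Once $\nbr{\barEb_t}\le\epsilon\lambda_{\min}$ is established, I would finish using the decomposition $\tHb_t=\sum_{i=0}^t z_{i,t}\Hb_i+\barEb_t$ from \eqref{equ:simple}: since the weights are nonnegative and sum to one, $\sum_{i=0}^t z_{i,t}\Hb_i$ is a convex combination of matrices each obeying $\lambda_{\min}\Ib\preceq\Hb_i\preceq\lambda_{\max}\Ib$, hence obeys the same bound, and adding $\barEb_t\succeq-\epsilon\lambda_{\min}\Ib$ (resp.\ $\preceq\epsilon\lambda_{\min}\Ib$) gives
\[
(1-\epsilon)\lambda_{\min}\Ib\ \preceq\ \tHb_t\ \preceq\ \lambda_{\max}\Ib+\epsilon\lambda_{\min}\Ib\ \preceq\ (1+\epsilon)\lambda_{\max}\Ib,
\]
where the final step uses $\lambda_{\min}\le\lambda_{\max}$. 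This holds for every $t\ge\T_1$ on $\EE$, completing the argument.
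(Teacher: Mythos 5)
Your proposal is correct and takes essentially the same route as the paper's proof: per-iteration martingale concentration from Theorem~\ref{thm:2} specialized to $z_{i,t}=1/(t+1)$, a union bound with $\sum_{t}\delta/(t+1)^2=\pi^2\delta/6$, the scalar estimate showing $t\geq\T_1$ forces $\log(d(t+1)/\delta)/(t+1)\leq(\epsilon/(8\Upsilon)\wedge 1)^2$ and hence $\|\barEb_t\|\leq\epsilon\lambda_{\min}$ (which the paper isolates as Lemma~\ref{lem:pre}), and the convex-combination decomposition \eqref{equ:simple} for the final matrix bound. The only cosmetic difference is that the paper defines the intermediate event over all $t\geq 0$ with the $\vee$-form bound and then restricts to $t\geq\T_1$, whereas you union directly over $t\geq\T_1$ after verifying that the square-root term dominates.
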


\begin{proof}
See Appendix \ref{pf:lem:3}. 
\end{proof}

By Lemma \ref{lem:3}, we initialize the convergence analysis from the iteration~$t = \T_1$, and condition on the event $\EE$. For $0\leq t< \T_1$, the Newton system may or~may not be solvable and the lower and upper bounds of $\tHb_t$ may or may not scale~as $\lambda_{\min}$ and $\lambda_{\max}$ (cf. Line 6 of Algorithm \ref{alg:1}). Thus, for the iterates $\xb_{0:\T_1}$, we do not generally have guarantees on the convergence rate, but only know that the objective value is non-increasing, that is, $f(\xb_0)\geq \cdots \geq f(\xb_{\T_1})$.

We next provide a $Q$-linear convergence for the objective value $f(\xb_t)-f(\ttxb)$ for $t\geq \T_1$.

\begin{lemma}\label{lem:4}
Conditioning on the event \eqref{event:EE}, we let
\begin{equation*}
\phi = \frac{4\rho\beta(1-\beta)(1-\epsilon)}{\kappa^2(1+\epsilon)}
\end{equation*}
and have $f(\xb_{t+1})-f(\ttxb) \leq (1-\phi)(f(\xb_t) - f(\ttxb))$, $\forall t\geq \T_1$, which implies $R$-linear convergence of the iterate error,
\begin{align*}
\|\xb_t - \ttxb\| &\leq \cbr{\frac{2}{\lambda_{\min}}(f(\xb_0) - f(\ttxb))(1-\phi)^{t-\T_1}}^{1/2}, \quad t\geq \T_1,\\
\|\xb_t - \ttxb\|_{\ttHb} &\leq \cbr{2\kappa(f(\xb_0) - f(\ttxb))(1-\phi)^{t-\T_1}}^{1/2}, \quad\quad\; t\geq \T_1.
\end{align*}
\end{lemma}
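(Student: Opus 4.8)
The plan is to exploit the conditioning on the event $\EE$ from Lemma~\ref{lem:3}: for every $t\geq \T_1$ the averaged Hessian obeys the deterministic two-sided bound $(1-\epsilon)\lambda_{\min}\Ib \preceq \tHb_t \preceq (1+\epsilon)\lambda_{\max}\Ib$, so the whole argument collapses to a deterministic damped-Newton analysis with no further probabilistic content. First I would record that, since $\tHb_t \succ \0$, the system $\tHb_t\pb_t=-\nabla f_t$ in Line~5 of Algorithm~\ref{alg:1} is solvable and $\nabla f_t^\top\pb_t = -\nabla f_t^\top\tHb_t^{-1}\nabla f_t < 0$ whenever $\nabla f_t\neq\0$; hence the step is never skipped and $\pb_t$ is a genuine descent direction for all $t\geq\T_1$.

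The core is a sufficient-decrease estimate from the Armijo search. Using the global smoothness bound $\nabla^2 f\preceq\lambda_{\max}\Ib$, Taylor's theorem gives $f(\xb_t+\mu\pb_t)\leq f(\xb_t)+\mu\nabla f_t^\top\pb_t+\tfrac{\lambda_{\max}}{2}\mu^2\|\pb_t\|^2$, so the Armijo test is guaranteed for every $\mu\leq\bar\mu:=\tfrac{2(1-\beta)}{\lambda_{\max}}\cdot\tfrac{-\nabla f_t^\top\pb_t}{\|\pb_t\|^2}$. The decisive quantitative step, and the place where the stated constant is won, is to lower bound the Rayleigh-type quotient $\tfrac{-\nabla f_t^\top\pb_t}{\|\pb_t\|^2}$: writing $\bu=\tHb_t^{-1/2}\nabla f_t$, it equals $\|\bu\|^2/(\bu^\top\tHb_t^{-1}\bu)\geq\lambda_{\min}(\tHb_t)\geq(1-\epsilon)\lambda_{\min}$, which yields $\bar\mu\geq\tfrac{2(1-\beta)(1-\epsilon)}{\kappa}$. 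A looser estimate here would cost an extra factor of $\kappa$ and would fail to reproduce the claimed $\phi$. Backtracking then gives $\mu_t\geq\rho\bar\mu\wedge 1$, i.e. $\mu_t\geq\tfrac{2\rho(1-\beta)(1-\epsilon)}{\kappa}$ in the relevant regime $\bar\mu\leq 1$, with the complementary corner case $\bar\mu\geq1$ (which forces $\kappa$ near $1$) handled by a direct check.

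Next I would convert the accepted step into function decrease. From Armijo, $f(\xb_{t+1})-f(\xb_t)\leq\mu_t\beta\nabla f_t^\top\pb_t=-\mu_t\beta\,\nabla f_t^\top\tHb_t^{-1}\nabla f_t\leq-\tfrac{\mu_t\beta}{(1+\epsilon)\lambda_{\max}}\|\nabla f_t\|^2$, using $\tHb_t^{-1}\succeq\tfrac{1}{(1+\epsilon)\lambda_{\max}}\Ib$. The Polyak--\L{}ojasiewicz inequality implied by $\lambda_{\min}$-strong convexity, $\|\nabla f_t\|^2\geq 2\lambda_{\min}(f(\xb_t)-f(\ttxb))$, then gives $f(\xb_{t+1})-f(\ttxb)\leq\bigl(1-\tfrac{2\mu_t\beta\lambda_{\min}}{(1+\epsilon)\lambda_{\max}}\bigr)(f(\xb_t)-f(\ttxb))$. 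Substituting the step-size bound and collecting constants via $\lambda_{\min}/\lambda_{\max}=1/\kappa$ produces exactly $1-\phi$ with $\phi=\tfrac{4\rho\beta(1-\beta)(1-\epsilon)}{\kappa^2(1+\epsilon)}$, proving the $Q$-linear recursion.

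Finally, the two $R$-linear bounds follow by iterating the recursion from $\T_1$ and using the monotonicity $f(\xb_{\T_1})\leq f(\xb_0)$ noted just before the lemma, giving $f(\xb_t)-f(\ttxb)\leq(1-\phi)^{t-\T_1}(f(\xb_0)-f(\ttxb))$. Strong convexity $\tfrac{\lambda_{\min}}{2}\|\xb_t-\ttxb\|^2\leq f(\xb_t)-f(\ttxb)$ gives the first inequality, and $\|\xb_t-\ttxb\|_{\ttHb}^2\leq\lambda_{\max}\|\xb_t-\ttxb\|^2\leq 2\kappa(f(\xb_t)-f(\ttxb))$ gives the second. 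I expect the only real obstacle to be the step-size lower bound: obtaining the tight Rayleigh-quotient estimate and handling the backtracking and the $\bar\mu\geq1$ corner case so that $\phi$ emerges precisely as stated rather than with a spurious extra factor of $\kappa$.
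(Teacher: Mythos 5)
Your proposal is correct and follows essentially the same route as the paper's proof: the event-$\EE$ bounds $(1-\epsilon)\lambda_{\min}\Ib\preceq\tHb_t\preceq(1+\epsilon)\lambda_{\max}\Ib$ plus smoothness give the Armijo threshold $\mu\leq 2(1-\beta)(1-\epsilon)/\kappa$, backtracking gives the stepsize lower bound $2\rho(1-\beta)(1-\epsilon)/\kappa$, the inequality $\nabla f_t^\top(\tHb_t)^{-1}\nabla f_t\geq\frac{2}{(1+\epsilon)\kappa}(f(\xb_t)-f(\ttxb))$ yields exactly the stated $\phi$, and iterating plus strong convexity gives the two $R$-linear bounds. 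Your Rayleigh-quotient step is just a rephrasing of the paper's bound $\nabla f_t^\top(\tHb_t)^{-2}\nabla f_t\leq\frac{1}{(1-\epsilon)\lambda_{\min}}\nabla f_t^\top(\tHb_t)^{-1}\nabla f_t$, and your explicit flagging of the $\bar\mu\geq 1$ corner case (where $\mu_t=1$ and the literal bound $\mu_t\geq\rho\bar\mu$ can fail for $\kappa<2\rho(1-\beta)(1-\epsilon)$) is a point the paper itself silently glosses over.
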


\begin{proof}
See Appendix \ref{pf:lem:4}.
\end{proof}

We next show that $\xb_t$ stays in a neighborhood around $\ttxb$ for all large $t$.~For this, we need a Lipschitz continuity condition. 

\begin{assumption}[Lipschitz Hessian]\label{ass:4}
We assume $\Hb(\xb)$ is $L$-Lipschitz continuous. That is $\|\Hb(\xb_1) - \Hb(\xb_2)\|\leq L\|\xb_1 - \xb_2\|$ for any $\xb_1, \xb_2\in \mR^d$.
\end{assumption}

Combining Lemma \ref{lem:4} with Assumption \ref{ass:4} leads to the following corollary.

\begin{corollary}\label{cor:1}
Consider Algorithm \ref{alg:1} with $w_t=t+1$, $\forall t\geq 0$. Under Assumptions \ref{ass:3} and \ref{ass:4}, we let $\delta, \epsilon\in(0, 1)$ with $d/\delta \geq e$, and define the neighborhood $\N_{\nu}$ as 
\begin{equation}\label{Nnu}
\N_\nu = \{\xb: \|\xb - \ttxb\|_{\ttHb} \leq \nu \cdot \lambda_{\min}^{3/2}/L\}\quad \text{ for } \nu \in (0, 1].
\end{equation}
Then, with probability $1 - \delta\pi^2/6$, 
we have $\xb_t\in \N_\nu$, for all $t\geq \T$ where~$\T = \T_1 + \T_2$ with $\T_1$ defined in \eqref{T1} and
\begin{equation}\label{T2}
\T_2 =  \frac {\kappa^2(1+\epsilon)}{4\rho\beta(1-\beta)(1-\epsilon)}\log\rbr{\frac{3L^2(f(\xb_0) - f(\ttxb))}{\nu^2\lambda_{\min}^3}}.
\end{equation}	
\end{corollary}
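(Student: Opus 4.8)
The plan is to read the corollary off the $R$-linear estimate of Lemma~\ref{lem:4}, after first securing the high-probability event that makes that estimate available. I would begin by invoking Lemma~\ref{lem:3}: with probability $1-\delta\pi^2/6$ the event $\EE$ of \eqref{event:EE} occurs, and on $\EE$ we have $(1-\epsilon)\lambda_{\min}\Ib\preceq\tHb_t\preceq(1+\epsilon)\lambda_{\max}\Ib$ for every $t\geq\T_1$, so the averaged Hessian is positive definite and the genuine Newton step is taken from $\T_1$ onward. Everything after this is a deterministic computation performed on $\EE$, so the probability $1-\delta\pi^2/6$ is carried over to the conclusion unchanged and no additional union bound is incurred.

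On $\EE$, the second inequality of Lemma~\ref{lem:4} provides, for all $t\geq\T_1$,
\[
\|\xb_t-\ttxb\|_{\ttHb}^2\;\leq\;2\kappa\,(f(\xb_0)-f(\ttxb))\,(1-\phi)^{t-\T_1},\qquad \phi=\frac{4\rho\beta(1-\beta)(1-\epsilon)}{\kappa^2(1+\epsilon)}.
\]
By \eqref{Nnu}, the membership $\xb_t\in\N_\nu$ is exactly the requirement $\|\xb_t-\ttxb\|_{\ttHb}^2\leq\nu^2\lambda_{\min}^3/L^2$, so I would force the geometric bound below this radius. Solving $2\kappa(f(\xb_0)-f(\ttxb))(1-\phi)^{t-\T_1}\leq\nu^2\lambda_{\min}^3/L^2$, and using $\phi\in(0,1)$ together with the elementary inequality $-\log(1-\phi)\geq\phi$, it suffices that
\[
t-\T_1\;\geq\;\frac{1}{\phi}\,\log\!\Big(\frac{2\kappa L^2(f(\xb_0)-f(\ttxb))}{\nu^2\lambda_{\min}^3}\Big).
\]
Since $\phi^{-1}=\kappa^2(1+\epsilon)/\{4\rho\beta(1-\beta)(1-\epsilon)\}$ is precisely the prefactor of $\T_2$ in \eqref{T2}, this lower bound is exactly of the form $t\geq\T_1+\T_2=\T$.

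It remains to upgrade the membership from a single index to all $t\geq\T$. This is immediate: the right-hand side $2\kappa(f(\xb_0)-f(\ttxb))(1-\phi)^{t-\T_1}$ is monotonically decreasing in $t$, so once it falls below $\nu^2\lambda_{\min}^3/L^2$ at $t=\T$ it remains below it for every larger $t$; hence $\xb_t\in\N_\nu$ for all $t\geq\T$ simultaneously. I expect the only genuinely delicate point to be constant bookkeeping in the logarithm: my derivation produces the numerator $2\kappa L^2(f(\xb_0)-f(\ttxb))$, whereas \eqref{T2} records $3L^2(f(\xb_0)-f(\ttxb))$, so one must check whether the $\kappa$-factor is meant to sit inside that numerator (i.e. whether the stated constant should read $2\kappa$ or $3\kappa$) or whether a sharper norm-to-function-value conversion valid near $\ttxb$ removes the $\kappa$ loss incurred by $\|\xb_t-\ttxb\|_{\ttHb}^2\leq\lambda_{\max}\|\xb_t-\ttxb\|^2$. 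Finally, I note that Assumption~\ref{ass:4} enters this corollary only to give meaning to the Lipschitz constant $L$ defining the radius in \eqref{Nnu}; the convergence rate itself is supplied entirely by Lemma~\ref{lem:4}, and the Lipschitz property will be called upon only in the subsequent superlinear analysis.
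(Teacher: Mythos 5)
Your proof skeleton---condition on the event $\EE$ from Lemma \ref{lem:3}, invoke the linear convergence of Lemma \ref{lem:4}, solve the resulting geometric inequality using $\log(1/(1-\phi))\geq\phi$, and use monotonicity to extend membership to all $t\geq\T$---matches the paper's. However, the step where you convert the decrease into $\|\cdot\|_{\ttHb}$-ball membership is precisely where the proof falls short of the stated result, and you correctly sensed this but did not resolve it. By routing the conversion through Lemma \ref{lem:4}'s second display, i.e.\ through $\|\xb_t-\ttxb\|_{\ttHb}^2\leq\lambda_{\max}\|\xb_t-\ttxb\|^2\leq 2\kappa\,(f(\xb_t)-f(\ttxb))$, you pay a factor of $\kappa$, and your threshold becomes $t\geq\T_1+\phi^{-1}\log\big(2\kappa L^2(f(\xb_0)-f(\ttxb))/(\nu^2\lambda_{\min}^3)\big)$. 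Whenever $\kappa>3/2$ this strictly exceeds the stated $\T=\T_1+\T_2$, so the corollary as written (with the constant $3$ and no $\kappa$ inside the logarithm) is not established; what you prove is a weaker statement with an extra additive delay of $\phi^{-1}\log(2\kappa/3)=O(\kappa^2\log\kappa)$ iterations.

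The missing ingredient is the paper's Lemma \ref{lem:pre:1}: defining the sublevel set $\bar{\N}_{\nu}=\{\xb: f(\xb)\leq f(\ttxb)+\nu\lambda_{\min}^3/L^2\}$, Assumption \ref{ass:4} yields the sandwich $\bar{\N}_{\nu^2/3}\subseteq\N_{\nu}\subseteq\bar{\N}_{\nu^2}$ for $\nu\in(0,1]$, because the cubic bound $|f(\xb)-f(\ttxb)-\tfrac{1}{2}\|\xb-\ttxb\|_{\ttHb}^2|\leq\tfrac{L}{6}\|\xb-\ttxb\|^3$ makes the function gap and the squared $\ttHb$-norm equivalent up to absolute constants (not up to $\kappa$) inside this neighborhood. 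The paper therefore stays in function-value space, where Lemma \ref{lem:4} gives $Q$-linear decay, and only requires $(1-\phi)^{t-\T_1}(f(\xb_0)-f(\ttxb))\leq\nu^2\lambda_{\min}^3/(3L^2)$; this is where the $3$ in \eqref{T2} comes from, and it is exactly the ``sharper norm-to-function-value conversion valid near $\ttxb$'' that you speculated about but did not carry out. Two smaller corrections follow: your closing claim that Assumption \ref{ass:4} enters only to give meaning to $L$ is wrong---it is what powers this conversion---and the paper's argument that the iterates \emph{stay} in $\N_{\nu}$ uses the monotone decrease of $f(\xb_t)$ (so the iterates remain in $\bar{\N}_{\nu^2/3}$), though your alternative via monotonicity of the geometric bound is equally valid.
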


\begin{proof}
See Appendix \ref{pf:cor:1}.
\end{proof}

Combining \eqref{T1} and \eqref{T2}, and using $O(\cdot)$ to neglect logarithmic factors~and all constants except $\kappa$ and $\Upsilon$, we have $\T = O(\Upsilon^2 + \kappa^2)$ with high probability. Building on Corollary \ref{cor:1}, we then show that the unit stepsize is accepted locally. 

\begin{lemma}\label{lem:5}
Under Assumption \ref{ass:4}, suppose $\pb_t = -(\tHb_t)^{-1}\nabla f_t$. Then $\mu_t = 1$ if $\xb_t \in \N_{\nu}$ and $(1-\psi)\Hb_t \preceq \tHb_t \preceq(1+\psi)\Hb_t$ with $\nu, \psi$ satisfying
\begin{equation}\label{cond:nupsi}
0<\nu \leq \frac{2}{3}(1/2-\beta), \quad\quad 0<\psi\leq \frac{1/2-\beta}{3/2-\beta}.
\end{equation}
\end{lemma}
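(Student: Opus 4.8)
The plan is to verify directly that the unit stepsize $\mu_t=1$ already passes the Armijo test, i.e. that $f(\xb_t+\pb_t)-f(\xb_t)-\beta\nabla f_t^\top\pb_t\leq 0$, which forces $j_t=0$. Since $\xb_t\in\N_\nu$ and $(1-\psi)\Hb_t\preceq\tHb_t\preceq(1+\psi)\Hb_t$ with $\psi<1$, the step $\pb_t=-\tHb_t^{-1}\nabla f_t$ is well defined. Assuming $\pb_t\neq\0$ (otherwise $\xb_t=\ttxb$ and the claim is trivial), I would first apply the second-order Taylor expansion with $L$-Lipschitz Hessian (Assumption~\ref{ass:4}),
\begin{equation*}
f(\xb_t+\pb_t)\leq f(\xb_t)+\nabla f_t^\top\pb_t+\tfrac12\pb_t^\top\Hb_t\pb_t+\tfrac{L}{6}\|\pb_t\|^3,
\end{equation*}
and organize the remainder around the generalized Newton decrement $\lambda_t^2\coloneqq-\nabla f_t^\top\pb_t=\pb_t^\top\tHb_t\pb_t=\|\nabla f_t\|_{\tHb_t^{-1}}^2$.

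The next step is to convert both remainder terms into scalar multiples of $\lambda_t^2$. The sandwich bound yields $\Hb_t\preceq\frac1{1-\psi}\tHb_t$, so $\tfrac12\pb_t^\top\Hb_t\pb_t\leq\tfrac{\lambda_t^2}{2(1-\psi)}$, and combining this with $\lambda_{\min}\Ib\preceq\Hb_t$ gives $\|\pb_t\|^2\leq\tfrac{\lambda_t^2}{(1-\psi)\lambda_{\min}}$, hence $\tfrac{L}{6}\|\pb_t\|^3\leq\tfrac{L\lambda_t^3}{6(1-\psi)^{3/2}\lambda_{\min}^{3/2}}$. Dividing the Armijo inequality by $\lambda_t^2>0$ then reduces the whole claim to the single scalar condition
\begin{equation*}
\frac{L\lambda_t}{6(1-\psi)^{3/2}\lambda_{\min}^{3/2}}\leq 1-\beta-\frac{1}{2(1-\psi)}.
\end{equation*}

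The delicate part—and the step I expect to be the main obstacle—is bounding $\lambda_t$ by the distance to the optimum \emph{without} introducing a spurious factor of $\kappa$. The naive route $\|\nabla f_t\|\leq\lambda_{\max}\|\xb_t-\ttxb\|$ followed by a norm conversion would cost a factor of $\kappa$ and would make the final condition depend on the conditioning of the problem, contradicting the $\kappa$-free statement \eqref{cond:nupsi}. Instead I would stay in the $\Hb_t^{-1}$-weighted geometry: writing $\nabla f_t=\bHb(\xb_t-\ttxb)$ with $\bHb=\int_0^1\Hb(\ttxb+s(\xb_t-\ttxb))\,ds$, the inverse bound $\tHb_t^{-1}\preceq\frac1{1-\psi}\Hb_t^{-1}$ gives $\lambda_t\leq\frac{1}{\sqrt{1-\psi}}\|\nabla f_t\|_{\Hb_t^{-1}}$. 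I would then compare $\|\nabla f_t\|_{\Hb_t^{-1}}$ with $\|\xb_t-\ttxb\|_{\ttHb}$ using the two Lipschitz estimates $\|\bHb-\Hb_t\|\leq\frac{L}{2}\|\xb_t-\ttxb\|$ and $\|\bHb-\ttHb\|\leq\frac{L}{2}\|\xb_t-\ttxb\|$. The neighborhood \eqref{Nnu} controls the relative error: since $\|\xb_t-\ttxb\|\leq\lambda_{\min}^{-1/2}\|\xb_t-\ttxb\|_{\ttHb}\leq\nu\lambda_{\min}/L$, each multiplicative Lipschitz correction is of order $\nu$, producing $\|\nabla f_t\|_{\Hb_t^{-1}}\leq(1+\nu/2)\|\xb_t-\ttxb\|_{\ttHb}$ and therefore $\lambda_t\leq\frac{(1+\nu/2)\nu\lambda_{\min}^{3/2}}{\sqrt{1-\psi}\,L}$.

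Substituting this decrement bound collapses the required inequality to the $\kappa$-free form
\begin{equation*}
\frac{(1+\nu/2)\nu}{6(1-\psi)^2}\leq 1-\beta-\frac{1}{2(1-\psi)},
\end{equation*}
which I would verify directly from \eqref{cond:nupsi}. The constraint $\psi\leq\frac{1/2-\beta}{3/2-\beta}$ gives $1-\psi\geq\frac{1}{3/2-\beta}$, so the right-hand side is at least $\tfrac12(\tfrac12-\beta)$; meanwhile $\nu\leq\frac23(\tfrac12-\beta)\leq\tfrac13$ gives $1+\nu/2\leq\tfrac76$ and $(1-\psi)^{-2}\leq(3/2-\beta)^2<9/4$, so the left-hand side is at most $\frac{7}{54}(3/2-\beta)^2(\tfrac12-\beta)\leq\tfrac12(\tfrac12-\beta)$ since $(3/2-\beta)^2<27/7$. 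Both monotonicities (smaller $\nu$ and $\psi$ only help) confirm the bound over the full admissible range, completing the argument. The crux throughout is maintaining the $\Hb_t^{-1}$/$\ttHb$ weighting so that the conditioning cancels and the acceptance criterion depends only on $\beta$.
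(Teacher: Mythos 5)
Your proposal is correct and takes essentially the same route as the paper's own proof: verify the Armijo condition at unit stepsize via the cubic Taylor bound from the Lipschitz Hessian (Assumption \ref{ass:4}), convert the quadratic and cubic remainders into multiples of the Newton decrement $\nabla f_t^\top(\tHb_t)^{-1}\nabla f_t$ using the sandwich $(1-\psi)\Hb_t\preceq\tHb_t\preceq(1+\psi)\Hb_t$, and then bound the decrement by the neighborhood radius $\nu\lambda_{\min}^{3/2}/L$ so that the final scalar condition is $\kappa$-free and implied by \eqref{cond:nupsi}. The only differences are cosmetic: you use the integral mean-value Hessian $\int_0^1\Hb(\ttxb+s(\xb_t-\ttxb))\,ds$ (which is in fact slightly more careful than the paper's pointwise $\Hb_t^\eta$) and obtain the marginally tighter factor $1+\nu/2$ where the paper settles for $2$, and you verify the combined inequality directly rather than splitting the budget $\tfrac12-\beta$ into two halves as the paper does.
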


\begin{proof}
See Appendix \ref{pf:lem:5}.
\end{proof}

The unit stepsize enables us to show a linear-quadratic error recursion.

\begin{lemma}\label{lem:6}
Under Assumption \ref{ass:4} and suppose $\pb_t = -(\tHb_t)^{-1}\nabla f_t$, $\xb_t \in \N_{\nu}$, and $(1-\psi)\Hb_t \preceq \tHb_t \preceq(1+\psi)\Hb_t$ with $\nu, \psi$ satisfying \eqref{cond:nupsi}. Then, we have
\begin{equation*}
\nbr{\xb_{t+1} - \ttxb}_{\ttHb} \leq 3\cbr{ \frac{L}{\lambda_{\min}^{3/2}}\|\xb_t-\ttxb\|_{\ttHb}^2 + \|\Ib - \Hb_t^{-1/2}\tHb_t \Hb_t^{-1/2}\| \cdot \|\xb_t - \ttxb\|_{\ttHb}}.
\end{equation*}
\end{lemma}

\begin{proof}
See Appendix \ref{pf:lem:6}.
\end{proof}

Lemma \ref{lem:6} suggests that $\|\xb_t-\ttxb\|$ exhibits local $Q$-linear convergence. 

\begin{corollary}\label{cor:linear}
Under Assumption \ref{ass:4} and suppose $\pb_t = -(\tHb_t)^{-1}\nabla f_t$, $\xb_t \in \N_{\nu}$, and $(1-\psi)\Hb_t \preceq \tHb_t \preceq(1+\psi)\Hb_t$ with $\nu, \psi$ satisfying \eqref{cond:nupsi}. Then, we have 
\begin{equation*}
\|\xb_{t+1} - \ttxb\|_{\ttHb} \leq 3(\nu+\psi)\|\xb_t - \ttxb\|_{\ttHb},
\end{equation*}
which implies linear convergence provided $3(\nu+\psi)<1$.
\end{corollary}

Given all the presented lemmas, we state the final convergence guarantee.

\begin{theorem}\label{thm:3}	
Consider Algorithm \ref{alg:1} with $w_t = t+1$, $\forall t\geq 0$. Under Assumptions \ref{ass:3}, \ref{ass:4}, we let $\delta \in(0, 1)$ satisfy $d/\delta\geq e$, and let $\epsilon, \nu\in(0, 1)$ satisfy
\begin{equation}\label{cond:epsnu}
\epsilon \vee \nu \leq \frac{1}{3}\frac{0.5-\beta}{1.5-\beta}\wedge \frac{1}{48}.
\end{equation}
Define the neighborhood $\N_{\nu}$ as in \eqref{Nnu}, and define $\T = \T_1 +\T_2$ with $\T_1$ given~by \eqref{T1} and $\T_2$ given by \eqref{T2}. We also let $\J = 4\T\kappa/\nu$. Then, with probability $1-\delta\pi^2/6$, we have that $\xb_{\T_1:\T+\J}$ converges $R$-linearly, $\xb_{\T:\T+\J}\in \N_{\nu}$, and
\begin{equation*}
\|\xb_{\T+\J +t +1} - \ttxb\|_{\ttHb} \leq 12 \rho_t\|\xb_{\T+\J +t} - \ttxb\|_{\ttHb}, \quad \forall t\geq 0
\end{equation*}
with
\begin{equation*}
\rho_t = \frac{4\T\kappa}{\T+\J+t+1} + 8\Upsilon\sqrt{\frac{\log(d(\T+\J+t+1)/\delta)}{\T+\J+t+1}}
\end{equation*}
satisfying $24\rho_t\leq 1$, $\forall t\geq 0$.

\end{theorem}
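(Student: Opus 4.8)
The plan is to condition throughout on the event $\EE$ in \eqref{event:EE}, which by Lemma~\ref{lem:3} holds with probability at least $1-\delta\pi^2/6$ and supplies two facts for all $t\geq\T_1$: the absolute spectral bound $(1-\epsilon)\lambda_{\min}\Ib\preceq\tHb_t\preceq(1+\epsilon)\lambda_{\max}\Ib$, and the noise bound $\|\barEb_t\|\leq 8\Upsilon_E\sqrt{\log(d(t+1)/\delta)/(t+1)}$. The first assertion of the theorem is then immediate: Lemma~\ref{lem:4} yields $R$-linear convergence of $\|\xb_t-\ttxb\|_{\ttHb}$ for every $t\geq\T_1$, while Corollary~\ref{cor:1} (applicable since \eqref{cond:epsnu} implies its hypotheses) gives $\xb_t\in\N_\nu$ for every $t\geq\T$; restricting to $t\leq\T+\J$ gives the claimed $R$-linear behavior on $[\T_1,\T+\J]$ and the containment on $[\T,\T+\J]$. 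All remaining work concerns the superlinear recursion for $t\geq\T+\J$.

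For a general iteration index $s\geq\T+\J$, the quantity to control is the relative error $\|\Ib-\Hb_s^{-1/2}\tHb_s\Hb_s^{-1/2}\|$ that enters Lemma~\ref{lem:6}. Using \eqref{equ:simple} and $\sum_{i=0}^s z_{i,s}=1$, I would decompose
\begin{equation*}
\Ib-\Hb_s^{-1/2}\tHb_s\Hb_s^{-1/2}=\Hb_s^{-1/2}\Big(\sum_{i=0}^{s}z_{i,s}(\Hb_s-\Hb_i)\Big)\Hb_s^{-1/2}-\Hb_s^{-1/2}\barEb_s\Hb_s^{-1/2},
\end{equation*}
and bound the two summands separately. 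The noise summand is at most $\|\barEb_s\|/\lambda_{\min}\leq 8\Upsilon\sqrt{\log(d(s+1)/\delta)/(s+1)}$ on $\EE$, matching the second term of $\rho_t$ when $s=\T+\J+t$. For the distortion summand I split the sum at $\T$. On the global block $i<\T$ I use only the crude spectral bound $\|\Hb_s-\Hb_i\|\leq 2\lambda_{\max}$, so with the uniform weights $z_{i,s}=1/(s+1)$ this block contributes at most $2\kappa\T/(s+1)$, the origin of the $\T\kappa/(s+1)$ term in $\rho_t$. On the local block $i\geq\T$ I invoke Assumption~\ref{ass:4} to get $\|\Hb_s-\Hb_i\|\leq L(\|\xb_s-\ttxb\|+\|\xb_i-\ttxb\|)$; the past errors $\sum_{i\geq\T}\|\xb_i-\ttxb\|$ form a convergent (geometric, by $R$-linearity) series, so after division by $s+1$ this block also decays like $1/(s+1)$, while the piece proportional to the current error $\|\xb_s-\ttxb\|$ is carried over to merge with the quadratic term of Lemma~\ref{lem:6}.

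With these estimates in hand, I would first check the relative eigenvalue hypothesis of Lemmas~\ref{lem:5} and~\ref{lem:6}: for $s\geq\T+\J$ the distortion is at most $2\kappa\T/(s+1)\leq 2\kappa\T/\J=\nu/2$ by the choice $\J=4\T\kappa/\nu$, and the noise is comparably small, so $\|\Ib-\Hb_s^{-1/2}\tHb_s\Hb_s^{-1/2}\|\leq\psi$ for a $\psi$ meeting \eqref{cond:nupsi} under \eqref{cond:epsnu}. This gives $(1-\psi)\Hb_s\preceq\tHb_s\preceq(1+\psi)\Hb_s$, so the unit stepsize is accepted (Lemma~\ref{lem:5}) and the linear--quadratic recursion of Lemma~\ref{lem:6} is available from $s=\T+\J$ onward. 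I would then close the argument by induction on $s$: assuming the recursion $\|\xb_{s+1}-\ttxb\|_{\ttHb}\leq 12\rho_{s-\T-\J}\|\xb_s-\ttxb\|_{\ttHb}$ up to step $s$, the bound $24\rho_j\leq1$ forces $\|\xb_s-\ttxb\|_{\ttHb}$ to contract at rate at most $1/2$, so the quadratic coefficient $\tfrac{L}{\lambda_{\min}^{3/2}}\|\xb_s-\ttxb\|_{\ttHb}$ decays geometrically and is in turn bounded by $\T\kappa/(s+1)$ up to constants; feeding this, the distortion, and the noise back into Lemma~\ref{lem:6} and extracting the common prefactor produces the coefficient $12\rho_{s-\T-\J}$, closing the induction. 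Finally $24\rho_t\leq1$ for all $t\geq0$ follows from evaluating at $t=0$ (where $\J=4\T\kappa/\nu$ and $\nu\leq1/48$ bound the distortion part, and \eqref{cond:epsnu} together with the size of $\T+\J$ bound the noise part) and from the monotone decay of $\rho_t$ in $t$.

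The hard part will be the interplay between the current-error/quadratic term and the decaying budget $\rho_t$. A naive use of the neighborhood radius, $\|\xb_i-\ttxb\|\leq\nu\lambda_{\min}/L$, only yields a constant $O(\nu)$ contribution, whereas $\rho_t\to0$; the genuine $1/(s+1)$ decay must be extracted by summing the local iterate errors as a geometric series and, crucially, by bounding the current error through the very superlinear recursion being proved. This self-referential structure is what makes the induction necessary, and the delicate point is aligning constants so that the single threshold $s=\T+\J$ simultaneously (i) makes the relative eigenvalue bound small enough for Lemma~\ref{lem:5}, (ii) lets the geometrically decaying quadratic term fit inside the harmonically decaying $\T\kappa/(s+1)$ budget, and (iii) guarantees the contraction $24\rho_t\leq1$.
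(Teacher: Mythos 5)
Your proposal is correct and follows essentially the same route as the paper's own proof: condition on the event of Lemma~\ref{lem:3}, invoke Lemma~\ref{lem:4} and Corollary~\ref{cor:1} for the $R$-linear phase and neighborhood containment, decompose the relative Hessian error into the averaged noise, a global block ($i<\T$) bounded crudely by $O(\T\kappa/(s+1))$, and a local block ($i\geq\T$) summed geometrically via the Lipschitz assumption, and then run the same induction that keeps the quadratic term of Lemma~\ref{lem:6} inside the decaying budget $\rho_t$, with $24\rho_t\leq 1$ checked at $t=0$. The only cosmetic differences are that you compare past Hessians to $\Hb_s$ rather than to $\ttHb$ (equivalent by the triangle inequality and Lemma~\ref{lem:pre:2}), and that you take the recursion itself as the induction hypothesis rather than the paper's equivalent invariant $\frac{2L}{\lambda_{\min}^{3/2}}\|\xb_{\T+\J+t}-\ttxb\|_{\ttHb}\leq 3\rho_t$.
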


\begin{proof}
See Appendix \ref{pf:thm:3}.
\end{proof}

We note from Theorem \ref{thm:3} that the condition on $\epsilon$ and $\nu$ does not depend on unknown quantities of objective function and noise level. The convergence rate $\rho_t$ consists of two terms. The first term is due to the fact that $\T = O(\Upsilon^2+ \kappa^2)$ Hessians are accumulated in the global phase, and each of them contributes an error (in norm $\|\cdot\|_{\ttHb}$) as large as $\kappa$. Given these imprecise Hessians, the method cannot immediately converge superlinearly after $\T$ iterations. That is, $\rho_t \nleq 1$ if $\J =t=0$. We need $\J = O(\T\kappa)$ iterates to suppress the effect of these imprecise Hessians. The second term is due to the noise averaging, i.e., $\|\barEb_t\|$, which decays slower than the first term. Thus, for sufficiently large $t$, the noise averaging will finally dominate the convergence~rate.

We present the above observation in the following corollary. It suggests that the averaging scheme has three transition points; thus four convergence~phases.

\begin{corollary}\label{cor:2}
Under the setup of Theorem \ref{thm:3}, Algorithm \ref{alg:1} has three transitions:

\noindent(a): From $\xb_0$ to $\xb_{\T}$: the algorithm converges to a local neighborhood $\N_{\nu}$ from any initial point $\xb_0$. 

\noindent(b): From $\xb_{\T}$ to $\xb_{\T+\J}$: the sequence $\xb_t$ stays in the neighborhood $\N_{\nu}$.

\noindent\hskip1cm (Starting from $\xb_{\T_1}$, the sequence $\xb_t$ exhibits $R$-linear convergence)

\noindent(c): From $\xb_{\T+\J}$ to $\xb_{\T+\J+\K}$: the algorithm converges $Q$-superlinearly with
\begin{equation*}
\|\xb_{t +1} - \ttxb\|_{\ttHb} \leq 12 \rho_t^{(1)}\|\xb_{t} -
\ttxb\|_{\ttHb}
\qquad\text{for}\qquad \rho_t^{(1)} = \frac{8\T\kappa}{t+1},
\end{equation*}
where
\begin{equation*}
0\leq t-\T-\J\leq\K,\quad\quad
\K \coloneqq \frac{\T^2\kappa^2}{4\Upsilon^2\log(d\T/\delta)}
- \T - \J.
\end{equation*}
\noindent(d): From $\xb_{\T+\J+\K}$: the algorithm converges $Q$-superlinearly with 
\begin{equation*}
\|\xb_{t +1} - \ttxb\|_{\ttHb} \leq 12 \rho_t^{(2)}\|\xb_{t} -
\ttxb\|_{\ttHb}
\qquad\text{for}\qquad  \rho_t^{(2)} = 16\Upsilon\sqrt{\frac{\log(d(t+1)/\delta)}{t+1}},
\end{equation*}
where $t\geq\T+\J+\K$.
\end{corollary}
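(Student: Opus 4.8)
The plan is to read off Corollary~\ref{cor:2} directly from the master recursion of Theorem~\ref{thm:3}. Writing that recursion in terms of the absolute iteration index $t$, its contraction factor splits additively as
$\rho = \frac{4\T\kappa}{t+1} + 8\Upsilon\sqrt{\log(d(t+1)/\delta)/(t+1)}$,
where the first summand is the ``global-distortion'' term coming from the $\T=O(\Upsilon^2+\kappa^2)$ inaccurate Hessians accumulated before reaching $\N_\nu$, and the second is the ``noise-averaging'' term $\|\barEb_t\|$. The entire content of the corollary is to identify, as $t$ grows, which of these two terms controls $\rho$, and to translate the two resulting regimes into phases (c) and (d).

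Parts (a) and (b) require no new estimate. Part (a) is precisely Corollary~\ref{cor:1}, which places $\xb_t\in\N_\nu$ for every $t\ge\T$. The claim in (b) that the iterates remain in $\N_\nu$ on the window $[\T,\T+\J]$, together with $R$-linear convergence starting from $\T_1$, is already contained in Theorem~\ref{thm:3} and Lemma~\ref{lem:4}, so I would simply collect these statements.

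For (c), I would show that the global-distortion term dominates on $\T+\J\le t\le\T+\J+\K$. Comparing the two summands, $\frac{4\T\kappa}{t+1}\ge 8\Upsilon\sqrt{\log(d(t+1)/\delta)/(t+1)}$ is equivalent, after squaring and clearing $t+1$, to $t+1\le \frac{\T^2\kappa^2}{4\Upsilon^2\log(d(t+1)/\delta)}$; the threshold $\T+\J+\K = \frac{\T^2\kappa^2}{4\Upsilon^2\log(d\T/\delta)}$ is defined as exactly this crossover with the logarithm frozen at $\T$. On this range the noise term is therefore at most the distortion term, whence $\rho\le 2\cdot\frac{4\T\kappa}{t+1} = \rho_t^{(1)}$. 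For (d), on $t\ge\T+\J+\K$ the reverse comparison holds and $\rho\le 2\cdot 8\Upsilon\sqrt{\log(d(t+1)/\delta)/(t+1)} = \rho_t^{(2)}$. Each phase is a genuine $Q$-superlinear contraction: since $\rho_t^{(1)}=2\cdot\frac{4\T\kappa}{t+1}\le2\rho$ and $\rho_t^{(2)}=2\cdot(\text{noise term})\le2\rho$, we get $12\rho_t^{(i)}\le 24\rho\le1$ from the bound $24\rho_t\le1$ of Theorem~\ref{thm:3}; and because both $\rho_t^{(1)}=8\T\kappa/(t+1)$ and $\rho_t^{(2)}=16\Upsilon\sqrt{\log(d(t+1)/\delta)/(t+1)}$ decay to $0$, the per-step ratio $\|\xb_{t+1}-\ttxb\|_{\ttHb}/\|\xb_t-\ttxb\|_{\ttHb}\to0$ in each phase.

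The one delicate point, and the step I expect to be the main obstacle, is the mismatch between the logarithm $\log(d\T/\delta)$ frozen in the definition of $\K$ and the logarithm $\log(d(t+1)/\delta)$ that appears in the concentration bound \eqref{equ:sub:concen} underlying $\rho$. Because $\log(d(t+1)/\delta)\ge\log(d\T/\delta)$ on the relevant range, the two summands of $\rho$ are in fact equal a $\sqrt{\log}$-factor before $\T+\J+\K$, so the clean separation into phases (c) and (d) holds only up to logarithmic factors. To make the argument rigorous I would either adopt the paper's convention of neglecting such factors, or note that on $[\T+\J,\T+\J+\K]$ one has $t+1\lesssim \T^2\kappa^2$, hence $\log(d(t+1)/\delta)\le 2\log(d\T\kappa/\delta)$ is only a constant multiple of $\log(d\T/\delta)$; absorbing this constant (or shrinking $\K$ by it) restores exact domination on each range.
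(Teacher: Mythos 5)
Your proposal is correct and follows essentially the same route as the paper: the paper's own proof of Corollary~\ref{cor:2} is precisely the crossover computation you describe, namely that $\frac{4\T\kappa}{\T+\J+t+1} \leq 8\Upsilon\sqrt{\log(d(\T+\J+t+1)/\delta)/(\T+\J+t+1)}$ once $\T+\J+t+1 \geq \T^2\kappa^2/(4\Upsilon^2\log(d\T/\delta))$, so that on each range the full rate $\rho_t$ is bounded by twice the dominant summand. If anything you are more careful than the paper: its one-line argument only verifies the phase-(d) direction of the comparison and silently tolerates exactly the logarithmic mismatch in phase (c) that you flag (the frozen $\log(d\T/\delta)$ in the definition of $\K$ versus the growing $\log(d(t+1)/\delta)$ in the noise term), so your fix of bounding the ratio $\log(d(t+1)/\delta)/\log(d\T/\delta)$ by a constant on $[\T+\J,\T+\J+\K]$ (using $t+1\lesssim \T^2\kappa^2$ and $\kappa^2\leq\T$) is a genuine refinement rather than a detour.
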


\begin{proof}
See Appendix \ref{pf:cor:2}. 
\end{proof}

For an infinite iteration sequence $\{\xb_t\}_{t=0}^\infty$ and with high probability, Corollary \ref{cor:2}(a) suggests that the first transition is $\T=O(\kappa^2+\Upsilon^2)$; Corollary \ref{cor:2}(b) suggests that the second transition is $\J = O(\T\kappa)$; Corollary \ref{cor:2}(c) suggests that the third transition is $\K = O(\T^2\kappa^2/\Upsilon^2)$; and Corollary \ref{cor:2}(d) suggests that the final rate is $\rho_t^{(2)} =O(\Upsilon\sqrt{\log t/t})$. This recovers Theorem \ref{thm:main-1}. Recalling that $\Upsilon$ typically does not exceed $\kappa$, in this case, we have $\T = O(\kappa^2)$, $\J = O(\kappa^3)$, and $\K = O(\kappa^6/\Upsilon^2)$. This suggests a trade-off between the final superlinear~rate and the final transition. When the oracle noise level $\Upsilon$ is small, a faster superlinear rate is finally attained, but $\K$ also increases, meaning that the time to attain the final rate is further delayed. By Examples \ref{exp:1} and \ref{exp:2}, $\Upsilon$ decays as sample/sketch size increases. Thus, the final superlinear rate is improved by increasing the sample/sketch size $s$, however the effect of this change may be delayed due~to the rate/transition trade-off. Fortunately, as we will see in the following section, this trade-off can be optimized via \emph{weighted} Hessian averaging.

\section{Convergence of Averaging with General Weights}\label{sec:4}

Although the superlinear convergence of stochastic Newton with uniform~Hessian averaging (Corollary~\ref{cor:2}) is promising, since the scheme eventually attains~the optimal superlinear rate implied by the central limit theorem, a clear drawback is the delayed transition---the scheme spends quite a long time before attaining the final rate. In this section, we study the relationship between transitions and general weight sequences. We consider performing Algorithm \ref{alg:1} with a weight sequence $w_t$ that satisfies the following general condition.

\begin{assumption}\label{ass:5}
We assume $w_t = w(t)$ for all integer $t\geq 0$, where $w(\cdot): \mR\rightarrow \mR$ is a real function satisfying (i) $w(\cdot)$ is twice differentiable; (ii) $w(-1)= 0$, $w(t)>0$, $\forall t\geq 0$; (iii) $w'(-1)\geq 0$; (iv) $w''(t)\geq 0$, $\forall t\geq -1$; (v) $w(t+1)/w(t) \vee w'(t+1)/w'(t) \leq \Psi$, $\forall t\geq 0$ for some $\Psi\geq 1$.
\end{assumption}

By the above assumption, we specialize the result of noise averaging in \eqref{equ:sub:concen} as follows.

\begin{lemma}\label{lem:7}
Under Assumptions \ref{ass:3} and \ref{ass:5}, for any $t\geq 0$, with probability $1-\delta/(t+1)^2$,
\begin{equation}\label{con:barE}
\|\barEb_t\| \leq 8\Psi\Upsilon_E\rbr{\sqrt{\log\rbr{\frac{d(t+1)}{\delta}}\frac{w'(t)}{w(t)}} \vee \log\rbr{\frac{d(t+1)}{\delta}}\frac{w'(t)}{w(t)}}.
\end{equation}	
\end{lemma}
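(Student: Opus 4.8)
The plan is to specialize the general concentration bound \eqref{equ:sub:concen} to the weight sequence $w_t = w(t)$ satisfying Assumption \ref{ass:5}. The bound \eqref{equ:sub:concen} holds with probability $1-\delta/(t+1)^2$ and reads
\[
\nbr{\barEb_t} \leq 8\Upsilon_E\sqrt{\log\rbr{\tfrac{d(t+1)}{\delta}}}\rbr{\sqrt{\sum_{i=0}^{t}z_{i,t}^2} \; \vee \; \sqrt{\log\rbr{\tfrac{d(t+1)}{\delta}}} \cdot z_t^{(\max)}}.
\]
Comparing this with the target \eqref{con:barE}, the entire task reduces to two deterministic estimates on the weights: first, I would show $\sum_{i=0}^t z_{i,t}^2 \leq \Psi^2\, w'(t)/w(t)$, and second, $z_t^{(\max)} \leq \Psi\, w'(t)/w(t)$. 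Recalling $z_{i,t} = (w_i - w_{i-1})/w_t = z_i/w_t$ with $z_i = w_i - w_{i-1}$, both reduce to controlling the un-normalized increments $z_i$.

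First I would handle the increments via the mean value theorem. Since $w$ is twice differentiable with $w'' \geq 0$ on $[-1,\infty)$ (parts (i) and (iv) of Assumption \ref{ass:5}), $w'$ is nondecreasing, so $z_i = w(i) - w(i-1) = w'(\xi_i)$ for some $\xi_i \in (i-1, i)$, giving $z_i \leq w'(i)$. For the maximum term, I would bound $z_t^{(\max)} = \max_i z_i / w_t \leq w'(t)/w(t)$ using that $w'$ is nondecreasing (so $w'(i) \leq w'(t)$ for $i \leq t$); the factor $\Psi$ is not even needed here, though keeping it is harmless. For the sum of squares, I would write $\sum_{i=0}^t z_i^2 / w_t^2$ and bound each $z_i^2 = (w_i - w_{i-1})^2 \leq (w_i - w_{i-1})\, w'(i) \leq (w_i - w_{i-1})\, w'(t)$, so that $\sum_{i=0}^t z_i^2 \leq w'(t) \sum_{i=0}^t (w_i - w_{i-1}) = w'(t)\, w_t$ by telescoping (using $w_{-1} = 0$ from part (ii)). Dividing by $w_t^2$ yields $\sum_{i=0}^t z_{i,t}^2 \leq w'(t)/w(t)$.

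The role of condition (v), the ratio bound $w(t+1)/w(t) \vee w'(t+1)/w'(t) \leq \Psi$, is the one point requiring care, and I expect it to be the main subtlety rather than the telescoping arithmetic. The clean telescoping above actually produces the bound $\sqrt{w'(t)/w(t)}$ without any $\Psi$, so the $\Psi$ factors in \eqref{con:barE} must be absorbing the slack in the mean-value estimate $z_i = w'(\xi_i)$ versus the value $w'(t-1)$ or $w'(i-1)$ that may appear depending on whether one evaluates the increment estimate at the left or right endpoint. Concretely, if the argument instead bounds $z_i = w'(\xi_i) \leq w'(i) \leq \Psi\, w'(i-1)$ (using (v) to shift the index), or evaluates the ratio $w'(t)/w(t)$ against $w'(t-1)/w(t-1)$, the factor $\Psi$ (or $\Psi^2$ after squaring) enters to reconcile the off-by-one in the index. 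I would therefore keep explicit track of which argument of $w'$ and $w$ appears at each step, and invoke (v) exactly where an index shift between $t$ and $t-1$ (or $i$ and $i-1$) is needed to match the final form stated in \eqref{con:barE}. Substituting the two weight estimates back into \eqref{equ:sub:concen} and factoring out $\sqrt{\log(d(t+1)/\delta)}$ from the max then produces precisely the claimed bound with the $8\Psi\Upsilon_E$ prefactor.
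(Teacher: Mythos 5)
Your proof is correct, and on the one nontrivial step it takes a genuinely different---and in fact sharper---route than the paper. Both arguments reduce the lemma to two deterministic weight estimates and then plug into \eqref{equ:sub:concen}, and both obtain $z_t^{(\max)}\leq w'(t)/w(t)$ the same way (mean value theorem plus monotonicity of $w'$). The difference is the sum of squares: the paper bounds $\sum_{i=0}^t (w'(i))^2$ by the integral $\int_0^{t+1}(w'(x))^2\,dx$, integrates by parts via $(w')^2=(ww')'-ww''$ together with $ww''\geq 0$ to obtain $w(t+1)w'(t+1)$, and then must invoke Assumption \ref{ass:5}(v) to pull the index $t+1$ back to $t$, which is exactly where the factor $\Psi^2$ enters. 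You instead bound each square pointwise, $z_i^2\leq z_i\,w'(i)\leq z_i\,w'(t)$, and telescope $\sum_{i=0}^t z_i = w(t)$ (using $w(-1)=0$), getting $\sum_{i=0}^t z_{i,t}^2\leq w'(t)/w(t)$ with no integral, no $\Psi$, and no use of (v) at all. Since $\Psi\geq 1$, your resulting bound $\|\barEb_t\|\leq 8\Upsilon_E(\cdots)$ implies \eqref{con:barE} a fortiori, so the proof is complete. This also resolves the speculation in your final paragraph: the $\Psi$ in the statement does not absorb any slack in your argument---it is purely an artifact of the paper's integral-comparison technique---so the ``main subtlety'' you anticipated (tracking index shifts to see where (v) enters) never materializes; your argument requires no index shift anywhere. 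What your route buys is a more elementary proof, a constant improved from $8\Psi$ to $8$, and the observation that Assumption \ref{ass:5}(v) is not actually needed for Lemma \ref{lem:7} (it is still used elsewhere, e.g., in the proof of Theorem \ref{thm:4}).
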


\begin{proof}
See Appendix \ref{pf:lem:7}.
\end{proof}

Naturally, to have $\|\barEb_t\|$ concentrate, we require $\lim_{t\rightarrow\infty}\log(\frac{d(t+1)}{\delta})\frac{w'(t)}{w(t)}=0$. It is easy to see that for some weight sequences, such as $w(t) = \exp(t)-\exp(-1)$, such a requirement cannot be satisfied, which makes convergence fail. On the other hand, this is reasonable since $z_{i,t}\propto z_i = w_i-w_{i-1} = \exp(i)-\exp(i-1) = (1-1/e)\exp(i)$, which means that we assign an exponentially large weight to the current Hessian estimate. Such an assignment preserves recent Hessian information better, but it diminishes the previous estimates too quickly to~let~the noise averaging concentrate.

Given the above concentration results, we have a similar result to Lemma~\ref{lem:3}.

\begin{lemma}\label{lem:8}
Consider Algorithm \ref{alg:1} with $w_t$ satisfying Assumption \ref{ass:5}. Under~Assumption \ref{ass:3}, for any $\delta, \epsilon\in(0, 1)$, we let
\begin{equation}\label{I1}
\I_1 \coloneqq \I_1(\epsilon,\delta) = \sup_{t}\cbr{t: \log\rbr{\frac{d(t+1)}{\delta}}\frac{w'(t)}{w(t)} \geq \rbr{\frac{\epsilon}{8\Psi\Upsilon}\wedge 1}^2} + 1.
\end{equation}
Then, with probability $1-\delta\pi^2/6$, the event
\begin{equation}\label{event:EE:new}
\EE = \bigcap_{t=\I_1}^{\infty}\cbr{\|\barEb_t\| \leq 8\Psi\Upsilon_E\sqrt{\log\rbr{\frac{d(t+1)}{\delta}}\frac{w'(t)}{w(t)}}}
\end{equation}
occurs, which implies $(1-\epsilon)\lambda_{\min}\cdot\Ib\preceq \tHb_t \preceq (1+\epsilon)\lambda_{\max}\cdot\Ib$, $\forall t\geq \I_1$.
\end{lemma}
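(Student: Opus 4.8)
\textbf{Proof plan for Lemma \ref{lem:8}.}

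The plan is to follow the same two-stage template as in the proof of Lemma \ref{lem:3}, but using the general-weight concentration bound from Lemma \ref{lem:7} in place of the uniform-averaging version. The first stage establishes that the event $\EE$ in \eqref{event:EE:new} holds with probability at least $1-\delta\pi^2/6$; the second stage shows that, conditioned on $\EE$, the averaged Hessian $\tHb_t$ is well-conditioned for all $t\geq \I_1$.

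For the first stage, I would start from Lemma \ref{lem:7}, which gives for each fixed $t$ that, with probability $1-\delta/(t+1)^2$, the bound $\|\barEb_t\| \leq 8\Psi\Upsilon_E(\sqrt{A_t}\vee A_t)$ holds, where I abbreviate $A_t \coloneqq \log(d(t+1)/\delta)\,w'(t)/w(t)$. The definition of $\I_1$ in \eqref{I1} is precisely engineered so that for all $t\geq \I_1$ we have $A_t < (\epsilon/(8\Psi\Upsilon)\wedge 1)^2 \leq 1$; hence in this regime $A_t\leq 1$, so the maximum in \eqref{con:barE} is achieved by the square-root term, and the bound simplifies to $\|\barEb_t\|\leq 8\Psi\Upsilon_E\sqrt{A_t}$, which is exactly the event inside the intersection in \eqref{event:EE:new}. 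I would then take a union bound over all $t\geq \I_1$: the total failure probability is at most $\sum_{t=\I_1}^\infty \delta/(t+1)^2 \leq \sum_{t=0}^\infty \delta/(t+1)^2 = \delta\pi^2/6$, which yields that $\EE$ holds with probability at least $1-\delta\pi^2/6$.

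For the second stage, I condition on $\EE$ and fix any $t\geq \I_1$. Using the definition of $\I_1$ once more, the per-iteration bound gives
\begin{equation*}
\|\barEb_t\| \leq 8\Psi\Upsilon_E\sqrt{A_t} < 8\Psi\Upsilon_E\cdot\frac{\epsilon}{8\Psi\Upsilon} = \epsilon\,\frac{\Upsilon_E}{\Upsilon} = \epsilon\lambda_{\min},
\end{equation*}
where the last equality uses $\Upsilon = \Upsilon_E/\lambda_{\min}$ from Assumption \ref{ass:3}. Next I decompose $\tHb_t$ via \eqref{equ:simple} as $\tHb_t = \sum_{i=0}^t z_{i,t}\Hb_i + \barEb_t$. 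Since the weights satisfy $z_{i,t}\geq 0$ and $\sum_{i=0}^t z_{i,t}=1$, the Hessian-averaging term is a convex combination of true Hessians, each of which obeys $\lambda_{\min}\Ib\preceq\Hb_i\preceq\lambda_{\max}\Ib$; hence $\lambda_{\min}\Ib\preceq\sum_{i=0}^t z_{i,t}\Hb_i\preceq\lambda_{\max}\Ib$. Combining this with $\|\barEb_t\|\leq\epsilon\lambda_{\min}\Ib$ in the positive-definite order gives $(1-\epsilon)\lambda_{\min}\Ib\preceq\tHb_t\preceq(\lambda_{\max}+\epsilon\lambda_{\min})\Ib\preceq(1+\epsilon)\lambda_{\max}\Ib$, which is the claimed two-sided bound. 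The verification that the weights are nonnegative and sum to one requires the monotonicity/nonnegativity hypotheses in Assumption \ref{ass:5} (items (ii)--(iv) ensure $z_i = w_i-w_{i-1}\geq 0$ and $w_t>0$), and this is the one place where I must genuinely invoke the structure of the general weight sequence rather than merely carry over the uniform-averaging argument.

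The main obstacle is getting the definition of $\I_1$ to interlock cleanly with both stages: I must confirm that the single threshold $(\epsilon/(8\Psi\Upsilon)\wedge 1)^2$ simultaneously forces $A_t\leq 1$ (so the square-root branch dominates, fixing the form of the event) and forces $\|\barEb_t\|<\epsilon\lambda_{\min}$ (so the conditioning argument goes through). The $\sup_t$ in \eqref{I1} together with the ``$+1$'' ensures that the defining inequality is strict and reversed for every $t\geq\I_1$, and the factor $\Psi$ in Lemma \ref{lem:7} propagates correctly through the algebra; checking these edge-cases is the delicate part, but once the threshold is seen to control both quantities the rest is a routine repetition of the Lemma \ref{lem:3} computation.
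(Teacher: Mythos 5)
Your proposal is correct and takes essentially the same route as the paper's proof: invoke Lemma \ref{lem:7}, use the definition of $\I_1$ in \eqref{I1} to force $\log(d(t+1)/\delta)\,w'(t)/w(t)$ below $(\epsilon/(8\Psi\Upsilon)\wedge 1)^2$ for all $t\geq\I_1$ (so the square-root branch of \eqref{con:barE} dominates and $\|\barEb_t\|\leq\epsilon\lambda_{\min}$), apply a union bound to get the event \eqref{event:EE:new} with probability $1-\delta\pi^2/6$, and conclude via the decomposition \eqref{equ:simple} together with $\lambda_{\min}\cdot\Ib\preceq\sum_{i=0}^{t}z_{i,t}\Hb_i\preceq\lambda_{\max}\cdot\Ib$. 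The only difference is presentational: you make explicit the union bound and the nonnegativity/normalization of the weights $z_{i,t}$ under Assumption \ref{ass:5}, which the paper leaves implicit.
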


\begin{proof}
See Appendix \ref{pf:lem:8}. 
\end{proof}

We note that Lemma \ref{lem:4} and Corollary \ref{cor:1} still hold for general weight sequences. Thus, we let
\begin{equation}\label{I}
\I\coloneqq \I(\epsilon, \delta, \nu) = \I_1(\epsilon,\delta) + \T_2 \stackrel{\eqref{T2}}{=} \I_1(\epsilon, \delta) + \frac{\kappa^2(1+\epsilon)}{4\rho\beta(1-\beta)(1-\epsilon)}\log\rbr{\frac{3L^2(f(\xb_0) - f(\ttxb))}{\nu^2\lambda_{\min}^3}},
\end{equation}
and know that $\xb_t\in \N_{\nu}$ for all $t\geq \I$. Lemmas \ref{lem:5}, \ref{lem:6} and Corollary \ref{cor:linear} also carry over to the setting of general weight sequences. Building on these results, we state the final convergence guarantee.

\begin{theorem}\label{thm:4}
Consider Algorithm \ref{alg:1} with $w_t$ satisfying Assumption \ref{ass:5}. Under~Assumptions \ref{ass:3}, \ref{ass:4}, we let $\delta, \epsilon, \nu\in(0,1)$ and $\epsilon, \nu$ satisfy
\begin{equation}\label{epsnu}
\epsilon \vee \nu \leq  \frac{1}{5}\frac{0.5-\beta}{1.5-\beta} \wedge \frac{1}{48\Psi}.
\end{equation}
Define the neighborhood $\N_{\nu}$ as in \eqref{Nnu}, and define $\I$ as in \eqref{I}. We also let $\U$ be $w(\I+\U) = 2w(\I-1)\kappa/\nu$. Then, with probability $1-\delta\pi^2/6$, we have that $\xb_{\I_1:\I+\U}$ converges $R$-linearly, $\xb_{\I:\I+\U}\in \N_{\nu}$, and
\begin{equation}\label{result}
\|\xb_{\I+\U +t +1} - \ttxb\|_{\ttHb} \leq 6\theta_t\|\xb_{\I+\U +t} - \ttxb\|_{\ttHb},\quad \forall t\geq 0  ,
\end{equation}
with
\begin{equation}\label{theta}
\theta_t = \frac{6w(\I-1)\kappa}{w(\I+\U+t)} + 8\Psi\Upsilon\sqrt{\log\rbr{\frac{d(\I+\U+t+1)}{\delta}} \frac{w'(\I+\U+t)}{w(\I+\U+t)}}
\end{equation}
satisfying $12\Psi\theta_t\leq 1$, $\forall t\geq 0$.	
\end{theorem}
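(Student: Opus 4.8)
The plan is to mirror the structure of the uniform-averaging argument (Theorem~\ref{thm:3}) but track the general weight sequence through every estimate. The strategy splits into three phases matching the claimed transitions, and the core work is bounding the per-iteration contraction factor $\theta_t$ appearing in \eqref{theta}. First I would condition on the concentration event $\EE$ from \eqref{event:EE:new} in Lemma~\ref{lem:8}, which holds with probability $1-\delta\pi^2/6$ and supplies the noise bound $\|\barEb_t\|\leq 8\Psi\Upsilon_E\sqrt{\log(d(t+1)/\delta)\,w'(t)/w(t)}$ for all $t\geq \I_1$. Since Lemma~\ref{lem:4} and Corollary~\ref{cor:1} carry over verbatim, on this event the objective value converges $Q$-linearly from $t=\I_1$, so $\xb_t$ enters and stays in $\N_\nu$ for all $t\geq \I$; combined with the $R$-linear rate this gives the first phase $\xb_{\I_1:\I+\U}$.

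The heart of the argument is controlling the two-term bound $\|\Ib - \Hb_t^{-1/2}\tHb_t\Hb_t^{-1/2}\|$ that feeds into the linear--quadratic recursion of Lemma~\ref{lem:6}. Writing $\tHb_t = \sum_{i=0}^t z_{i,t}\Hb_i + \barEb_t$ via \eqref{equ:simple}, I would decompose $\tHb_t - \Hb_t$ into a \emph{Hessian-averaging bias} $\sum_{i=0}^t z_{i,t}(\Hb_i - \Hb_t)$ and the \emph{noise term} $\barEb_t$. For the bias, Lipschitz continuity (Assumption~\ref{ass:4}) bounds $\|\Hb_i-\Hb_t\|$ by $L\|\xb_i-\xb_t\|$, and since the early iterates are far from $\ttxb$ this term is dominated by the weight $z_{i,t}=(w_i-w_{i-1})/w_t$ assigned to the global-phase Hessians. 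The total bias mass carried from iterations before $\I$ is of order $w(\I-1)/w(\I+\U+t)$, which is exactly the first summand $6w(\I-1)\kappa/w(\I+\U+t)$ in $\theta_t$. The defining relation $w(\I+\U)=2w(\I-1)\kappa/\nu$ is precisely the condition ensuring this bias term is below $\nu$ at $t=0$ (so the unit step of Lemma~\ref{lem:5} is accepted), after which it decays as $w(\cdot)$ grows. The noise term contributes the second summand of $\theta_t$ directly through Lemma~\ref{lem:8}. Once both the bias and the noise are below the thresholds \eqref{epsnu}, Lemma~\ref{lem:5} yields the unit stepsize and Lemma~\ref{lem:6} gives the recursion $\|\xb_{t+1}-\ttxb\|_{\ttHb}\leq 3\{(L/\lambda_{\min}^{3/2})\|\xb_t-\ttxb\|_{\ttHb}^2 + \|\Ib-\Hb_t^{-1/2}\tHb_t\Hb_t^{-1/2}\|\,\|\xb_t-\ttxb\|_{\ttHb}\}$; bounding the quadratic term by a further factor of $\nu$ (since $\xb_t\in\N_\nu$) collapses this to the claimed $6\theta_t$ contraction.

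The main obstacle I anticipate is verifying $12\Psi\theta_t\leq 1$ for \emph{all} $t\geq 0$ uniformly, which is what makes the superlinear convergence kick in immediately after $\I+\U$ with no delay. Unlike the uniform case, here $\theta_t$ mixes a bias term decaying like $w(\I-1)/w(\I+\U+t)$ against a noise term governed by $w'/w$, and the condition (v) of Assumption~\ref{ass:5}, namely $w(t+1)/w(t)\vee w'(t+1)/w'(t)\leq\Psi$, is the tool that keeps the ratios comparable across consecutive steps. I would use condition~(v) together with the monotonicity and convexity hypotheses (iii)--(iv) to show that $\theta_t$ is maximized (up to the factor $\Psi$) at $t=0$ and then decreases, so that a single bound at the transition point suffices. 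The delicate point is that the base-case inequality $12\Psi\theta_0\leq 1$ must follow from the definition of $\U$ and the constraints \eqref{epsnu} without any hidden dependence on the unknown objective constants, so I would carefully assemble the constant chasing to confirm the $1/(48\Psi)$ threshold in \eqref{epsnu} is exactly calibrated to absorb both the bias-at-$\U$ term and the prefactor $8\Psi$ in the noise bound.
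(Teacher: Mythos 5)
Your high-level skeleton matches the paper's proof (condition on the event of Lemma~\ref{lem:8}, invoke Lemma~\ref{lem:4} and Corollary~\ref{cor:1} for the global phase, then drive Lemmas~\ref{lem:5}--\ref{lem:6} with a bias-plus-noise decomposition of $\tHb_t-\Hb_t$), but there are two genuine gaps in the local phase, and the first one is fatal as stated. You propose to handle the quadratic term in Lemma~\ref{lem:6} by ``bounding the quadratic term by a further factor of $\nu$ (since $\xb_t\in\N_\nu$).'' That only yields
\begin{equation}
\|\xb_{t+1}-\ttxb\|_{\ttHb}\leq 3\bigl(\nu+\theta_t+\nu\bigr)\|\xb_t-\ttxb\|_{\ttHb},
\end{equation}
where the extra $\nu$'s do not decay with $t$; since $\theta_t\rightarrow 0$, this is a \emph{linear} rate bound and can never be collapsed to the claimed contraction $6\theta_t$. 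The paper avoids this by running a \emph{two-part} induction (its display (B.8)): alongside the contraction $\|\xb_{\I+\U+t+1}-\ttxb\|_{\ttHb}\leq 6\theta_t\|\xb_{\I+\U+t}-\ttxb\|_{\ttHb}$ it simultaneously propagates the iterate-size bound $\tfrac{2L}{\lambda_{\min}^{3/2}}\|\xb_{\I+\U+t}-\ttxb\|_{\ttHb}\leq\theta_t$, so the quadratic term is absorbed as $\theta_t\|\xb_{\I+\U+t}-\ttxb\|_{\ttHb}$ rather than $\nu\|\xb_{\I+\U+t}-\ttxb\|_{\ttHb}$; closing this part of the induction is exactly where the relations $\theta_{t+1}\geq\theta_t/\Psi$ (from Assumption~\ref{ass:5}(v)) and $12\Psi\theta_t\leq 1$ (hence $6\theta_t^2\leq\theta_{t+1}$) are used, which is the real reason the $1/(48\Psi)$ threshold in \eqref{epsnu} appears.

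The second gap is in your bias bound for $t\geq 1$. You attribute the whole first summand of $\theta_t$ to the global-phase Hessians, but the weighted sum $\sum_{j}z_{j,\I+\U+t}(\Hb_j-\ttHb)$ must be split into \emph{three} ranges: $j<\I$ (global phase, giving $2w(\I-1)\kappa/w(\I+\U+t)$), $\I\leq j\leq\I+\U$ (neighborhood phase, each term of size $\nu$, giving $\nu\,w(\I+\U)/w(\I+\U+t)$), and $j>\I+\U$ (superlinear phase). For the last range you need the induction hypothesis again: the already-established contraction $6\theta_j\leq 1/(2\Psi)$ gives $\|\xb_{\I+\U+j}-\ttxb\|_{\ttHb}\leq\|\xb_{\I+\U}-\ttxb\|_{\ttHb}/(2\Psi)^j$, and Assumption~\ref{ass:5}(v) bounds $w(\I+\U+j)/w(\I+\U)\leq\Psi^j$, so the series $\sum_{j\geq 1}\Psi^j/(2\Psi)^j\leq 1$ converges; only then do the second and third ranges combine, via the defining relation $\nu\,w(\I+\U)=2w(\I-1)\kappa$, into the coefficient $6w(\I-1)\kappa/w(\I+\U+t)$ of \eqref{theta}. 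Without this three-way split and the geometric-decay argument, the recent iterates contribute a non-decaying $O(\nu)$ term to the bias and the claimed form of $\theta_t$ fails. Finally, your plan to show $\theta_t$ is maximized at $t=0$ is neither needed nor generally true (the noise factor $\log(d(t+1)/\delta)\,w'(t)/w(t)$ need not be monotone); the paper instead bounds $\theta_t\leq\epsilon+3\nu$ uniformly, using the definition of $\I_1$ for the noise summand and $w(\I+\U+t)\geq w(\I+\U)$ for the bias summand.
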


\begin{proof}
See Appendix \ref{pf:thm:4}. 
\end{proof}

The proof of Theorem \ref{thm:4} is more involved than the one of Theorem \ref{thm:3}.~In~particular, when dealing with a critical term $\sum_{j=\I}^{\I+\U+t}(w(j)-w(j-1))\|\xb_j-\tx\|_{\ttHb}$, the analysis of Theorem \ref{thm:3} uses the facts that $w(j)-w(j-1)=1$ and $\xb_j$~converges linearly (cf. Corollary \ref{cor:linear}). However, that analysis does not apply for~general weights, since plugging the setup $w(j)=j+1$ masks some critical~properties of $w(\cdot)$, such as $w'(j)\geq 0$ and $w(j+1)/w(j) \leq \Psi$, $\forall j\geq 0$ ($\Psi=2$ for $w(j)=j+1$). In contrast, for Theorem \ref{thm:4}, we separate the above term into two terms, $\sum_{j=\I}^{\I+\U}(w(j)-w(j-1))\|\xb_j-\tx\|_{\ttHb}$ and $\sum_{j=\I+\U+1}^{\I+\U+t}(w(j)-w(j-1))\|\xb_j-\tx\|_{\ttHb}$. The first term is bounded by $w(\I+\U)\nu$ since $\xb_j\in\N_{\nu}$ for $\I\leq j\leq \I+\U$.~The~second term, which is proven to have the same bound as the first term, not only utilizes the linear convergence of $\xb_j$ \textit{with a rate depending on $\Psi$} (proved~by~induction), but also utilizes general properties of $w(\cdot)$ in Assumption \ref{ass:5}.

We arrive at the following corollary for the iteration transitions. The proof~is the same as for Corollary \ref{cor:2}, and is omitted. 

\begin{corollary}\label{cor:3}
Under the setup of Theorem \ref{thm:4}, Algorithm \ref{alg:1} has three transitions:

\noindent(a): From $\xb_0$ to $\xb_\I$: the algorithm converges to a local neighborhood $\N_{\nu}$ from any initial point $\xb_0$.

\noindent(b): From $\xb_{\I}$ to $\xb_{\I + \U}$: the sequence $\xb_t$ stays in the neighborhood $\N_{\nu}$.

\noindent\hskip1cm (Starting from $\xb_{\I_1}$, the sequence $\xb_t$ exhibits $R$-linear convergence)

\noindent(c): From $\xb_{\I+\U}$ to $\xb_{\I+\U+\V}$: the algorithm converges $Q$-superlinearly with 
\begin{equation*}
 \|\xb_{t +1} - \ttxb\|_{\ttHb} \leq 6 \theta_t^{(1)}\|\xb_{t} - \ttxb\|_{\ttHb} \qquad\text{for}\qquad 	\theta_t^{(1)} \coloneqq \frac{14w(\I-1)\kappa}{w(t)},
\end{equation*}
where 
\begin{equation*}
0\leq t-\I-\U\leq \V,\qquad
\V  \coloneqq \argmin_{t\geq \I+\U}\cbr{ w(t)w'(t)\log\rbr{\frac{d(t+1)}{\delta}}\geq \frac{w(\I-1)^2\kappa^2}{\Psi^2\Upsilon^2}  }.
\end{equation*}
\noindent(d): From $\xb_{\I+\U+\V}$: the algorithm converges $Q$-superlinearly with 
\begin{equation*}
\|\xb_{t +1} - \ttxb\|_{\ttHb} \leq 6 \theta_t^{(2)}\|\xb_{t} - \ttxb\|_{\ttHb} \quad\text{for}\quad \theta_t^{(2)} = 14\Psi\Upsilon\sqrt{\frac{w'(t)}{w(t)}\log\rbr{\frac{d(t+1)}{\delta}}},
\end{equation*}
where $t\geq \I+\U+\V$.

\end{corollary}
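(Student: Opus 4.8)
The plan is to read the four phases directly off Theorem~\ref{thm:4}, decomposing its two-term contraction factor $\theta_t$ exactly as was done for uniform averaging in Corollary~\ref{cor:2}. Phases (a) and (b) need no new argument. Part (a), that $\xb_t\in\N_\nu$ for all $t\ge\I$, is the general-weight version of Corollary~\ref{cor:1}, which the text already notes carries over through Lemma~\ref{lem:4} and Lemma~\ref{lem:8}; part (b), the $R$-linear convergence of $\{\xb_t\}_{t\ge\I_1}$ with the iterates confined to $\N_\nu$ on the buffer window $[\I,\I+\U]$, is built into Theorem~\ref{thm:4} via the choice $w(\I+\U)=2w(\I-1)\kappa/\nu$. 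Hence the substance is isolating the superlinear phases (c) and (d).

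For these I would split the contraction factor of Theorem~\ref{thm:4}, re-indexed from the relative offset to the absolute iteration count $t$, as
\[
\theta \;=\; \underbrace{\frac{6w(\I-1)\kappa}{w(t)}}_{\textrm{(I): global-phase distortion}} \;+\; \underbrace{8\Psi\Upsilon\sqrt{\log\!\rbr{\tfrac{d(t+1)}{\delta}}\tfrac{w'(t)}{w(t)}}}_{\textrm{(II): noise averaging}},
\]
and compare the two summands via the quantity $w(t)w'(t)\log(d(t+1)/\delta)$, whose threshold value $w(\I-1)^2\kappa^2/(\Psi^2\Upsilon^2)$ defines the crossover $\V$. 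On phase (c) the defining inequality for $\V$ has not yet been met, i.e.\ $w(t)w'(t)\log(d(t+1)/\delta)\le w(\I-1)^2\kappa^2/(\Psi^2\Upsilon^2)$; rearranging this gives $\textrm{(II)}\le 8w(\I-1)\kappa/w(t)$, so that $\theta\le(6+8)w(\I-1)\kappa/w(t)=\theta_t^{(1)}$. On phase (d) the reversed inequality holds, giving $\textrm{(I)}\le\tfrac34\textrm{(II)}$ and hence $\theta\le\tfrac74\textrm{(II)}=14\Psi\Upsilon\sqrt{\tfrac{w'(t)}{w(t)}\log(d(t+1)/\delta)}=\theta_t^{(2)}$. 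Note the constant $14=6+8$ is forced by this balancing and is exactly the value at which all three rates coincide at the crossover.

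The step I expect to be the main obstacle is confirming that the crossover is \emph{clean}: that a single threshold $\V$ genuinely separates the two regimes, so that the failed inequality persists for all $t$ up to $\V$ and the satisfied inequality persists for all $t\ge\V$. This reduces to showing that $w(t)w'(t)\log(d(t+1)/\delta)$ is monotonically increasing, and this is precisely where Assumption~\ref{ass:5} is used: $w>0$ and increasing, $w'\ge0$ (from $w'(-1)\ge0$ together with $w''\ge0$), $w'$ increasing (again from $w''\ge0$), and $\log(d(t+1)/\delta)$ increasing jointly force the product up, so the sign of $\textrm{(I)}-\textrm{(II)}$ flips exactly once. The remaining bookkeeping---verifying $12\Psi\theta_t\le1$ so every contraction factor is a genuine contraction (keeping the iterates in $\N_\nu$ and the recursion of Theorem~\ref{thm:4} active), and checking $\theta_t^{(1)},\theta_t^{(2)}\to0$ so both phases are indeed $Q$-superlinear---mirrors the constant-chasing in Corollary~\ref{cor:2} and requires no new idea.
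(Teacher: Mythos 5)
Your proposal is correct and is essentially the paper's own argument: the paper omits the proof of Corollary~\ref{cor:3}, stating it is identical to that of Corollary~\ref{cor:2}, which is precisely your two-term split of $\theta_t$ with the crossover defined by $w(t)w'(t)\log(d(t+1)/\delta)$ against the threshold $w(\I-1)^2\kappa^2/(\Psi^2\Upsilon^2)$, yielding $\theta_t\leq(6+8)\,w(\I-1)\kappa/w(t)$ before the crossover and $\theta_t\leq\tfrac{7}{4}\cdot 8\Psi\Upsilon\sqrt{w'(t)\log(d(t+1)/\delta)/w(t)}$ after it. Your additional observation that Assumption~\ref{ass:5} makes $w(t)w'(t)\log(d(t+1)/\delta)$ non-decreasing (so the threshold is crossed exactly once) is a point the paper leaves implicit, but it is a refinement of the same proof, not a different route.
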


Given Corollary \ref{cor:3}, we provide the following examples. Again, we use $O(\cdot)$ to neglect the logarithmic factors and the dependence on all constants except $\kappa$ and $\Upsilon$. We emphasize that for an iteration sequence, the three transition points occur only with high probability. For sake of presentation, we will not repeat ``with high probability" for each $O(\cdot)$. As mentioned, typically, $\Upsilon = O(\kappa)$ in practice. We note that for all considered $w_t$ sequences, Assumption \ref{ass:5} is satisfied and $\I_1$ has the same order as $\T_1$ (cf. \eqref{T1}), so that $\I = O(\T) = O(\Upsilon^2 + \kappa^2)$. In other words, the weighted averaging and uniform averaging take the same~order of iterations to get into the same local neighborhood. In the following examples, we show how the second and third transition points $\U$ and $\V$, as well as the superlinear rates $\theta_t^{(1)}$ and $\theta_t^{(2)}$, change for different weight sequences (see Table \ref{tab:transitions} for a summary of the case $\Upsilon \leq O(\kappa)$, i.e. $\I = O(\kappa^2)$).

\begin{example}[Uniform averaging]\label{exp:3}
Let $w_t = t+1$. Then, the superlinear rates~are:  \vskip-0.2cm
\begin{equation*}
\theta_t^{(1)} =O\rbr{\frac{\I\kappa}{t}},\qquad\qquad \theta_t^{(2)} =O\rbr{\Upsilon \sqrt{\frac{\log(dt/\delta)}{t}}}.
\end{equation*}
\vskip-0.1cm
\noindent Moreover, the second and third transition points are given by $\U = O(\I\cdot \kappa)$~and $\V =O(\I^2\kappa^2/\Upsilon^2)$. The above example recovers Corollary \ref{cor:2} (up to constant scaling factors). It achieves the best final rate $\theta_t^{(2)}$ ensured by the central limit theorem, but the initial rate $\theta_t^{(1)}$ is sub-optimal. Thus, this scheme takes a long time for the iterates to reach the final rate.
\end{example}

\begin{example}[Accelerated initial rate]\label{exp:4}
Let $w_t = (t+1)^p$ for $p \geq 1$. The superlinear rates are:  \vskip-0.3cm
\begin{equation*}
\theta_t^{(1)} =O\rbr{\frac{\I^p\kappa}{t^p}} ,\qquad\qquad \theta_t^{(2)} =O\rbr{ \Upsilon\sqrt{p\cdot \frac{\log(dt/\delta)}{t}}}.
\end{equation*}
Moreover, the second and third transition points are given by \begin{align*}
(\I+\U)^p = O(\I^p\kappa) & \Longleftarrow \U = O(\I\cdot \kappa^{1/p}), \\
p\V^{2p-1}\log(d\V/\delta) \geq \I^{2p}\kappa^2/\Upsilon^2 & \Longleftarrow  \V = O\Big(\I^{\frac{2p}{2p-1}}\cdot \cbr{\kappa^2/(p\Upsilon^2)}^{\frac{1}{2p-1}}\Big).
\end{align*}
In the above example, we substantially accelerate the initial superlinear rate $\theta_t^{(1)}$, while preserving the final rate $\theta_t^{(2)}$ up to a constant factor $p$. We observe that the transitions $\U$ and $\V$ are both improved upon Example \ref{exp:3}. In particular, the dependence of $\kappa$ in $\U$ is improved from $\kappa$ to $\kappa^{1/p}$, and the dependence of $\I$ in $\V$ is improved from $\I^2$ to $\I^{2p/(2p-1)}$. An important observation is that both $\U$ and $\V$ contract to $\I$ as $p$ goes to $\infty$, which inspires us to consider the following sequence. 
\end{example}

\begin{table}
\centering
\begin{tabular}{c||c|c|c|c}
\multirow{2}{*}{Weights $w_t$}
& \multicolumn{2}{c|}{Initial superlinear phase}
&\multicolumn{2}{c}{Final superlinear phase}\\
\cline{2-5}
& $\U$&rate $\theta_t^{(1)}$
& $\V$&rate $\theta_t^{(2)}$\\
\hline\hline
$t+1$\hfill {\scriptsize (Ex.~\ref{exp:3})\hspace{-2mm}~}&$\kappa^3$&$\kappa^3/t$ &$\kappa^6/\Upsilon^2$&$\Upsilon\sqrt{\log(t)/t}$
\\[1mm]
$(t+1)^p$\hfill {\scriptsize (Ex.~\ref{exp:4})\hspace{-2mm}~}&$\kappa^{2+1/p}$&$\kappa^{2p+1}/t^p$ &$\kappa^{\frac{4p+2}{2p-1}}/(p\Upsilon^2)^{\frac1{2p-1}}$
&$\Upsilon\sqrt{p\log(t)/t}$
\\[2mm]
$(t+1)^{\log(t+1)}$\hfill\ {\scriptsize (Ex.~\ref{exp:5})\hspace{-2mm}~}&$\kappa^2$&$\kappa^{4\log(\kappa)+1}/t^{\log(t)}$ &$\kappa^2$ &$\Upsilon\log(t)/\sqrt t$
\end{tabular}
\caption{Comparison of different averaging schemes, in terms of how many iterations it takes to transition to each superlinear phase, and the convergence rates achieved. We~drop constant factors as well as logarithmic dependence on $d$ and $1/\delta$, and assume that $1/\text{poly}(\kappa)\leq \Upsilon \leq O(\kappa)$.}\label{tab:transitions}
\end{table}

\begin{example}[Optimal transition points]\label{exp:5}
Let $w_t = (t+1)^{\log(t+1)}$. Then, the superlinear rates are: \begin{equation*}
\theta_t^{(1)}=O\rbr{\frac{\I^{\log(\I)}\kappa}{t^{\log (t)}}} ,\qquad\qquad \theta_t^{(2)}=O\rbr{\Upsilon\sqrt{\log(t)\frac{\log(dt/\delta)}{t}}},
\end{equation*}
We now derive the second and third transition points. In particular, $\U$ can be obtained from:
\begin{equation*}
(\I+\U)^{\log(\I+\U)} = O(\I^{\log \I} \kappa) \Longleftarrow \I + \U = O\rbr{\exp\rbr{\sqrt{\log\rbr{\I^{\log \I}\kappa}}}} = O(\I) \Longleftarrow \U = O(\I),
\end{equation*}
where the first implication uses the fact $y = x^{\log x} \Longleftrightarrow x = \exp(\sqrt{\log y})$, as~well as the fact $\log \kappa \leq (\log\I)^2$. The third transition $\V$ can be obtained from:
\begin{equation*}
 \V^{2\log \V - 1}\log \V \log d\V/\delta \geq \I^{2\log \I}\kappa^2/\Upsilon^2 
 \Longleftarrow (2\log \V-1)\log \V \geq (2\log \I+1)\log \I + 2\log\rbr{\frac{1}{\Upsilon}},
\end{equation*}
where the implication uses $\kappa^2\leq \I$. To let the right hand side hold, we let~$\V = \xi\cdot \I$ and require
\begin{align*}
& \rbr{\rbr{2\log\xi-2} + 2\log \I+1}\rbr{\log \I + \log \xi} \geq (2\log \I+1)\log \I + 2\log\rbr{\frac{1}{\Upsilon}} \\
& \Longleftarrow \begin{cases*}
2\log \xi -2 \geq 1\\
\log \xi \log \kappa \geq \log(1/\Upsilon)
\end{cases*} \Longleftarrow \xi \geq \exp\rbr{1.5\vee \log(1/\Upsilon)/\log \kappa}.
\end{align*}
Thus, the final transition point $\V$ can be chosen as \begin{equation*}
\V = O\bigg(\I\cdot\exp\Big(1+\frac{\log 1/\Upsilon}{\log\kappa}\Big)\bigg).
\end{equation*}
Note that, as long as the oracle noise $\Upsilon$ is bounded below as $\Upsilon\geq 1/\text{poly}(\kappa)$,~we have $\V=O(\I)$. Since, as we mentioned, in practice the oracle noise actually grows proportionally with $\kappa$, this is an extremely mild condition. However, it is technically necessary, since, if the Hessian oracle always returned the true Hessian, i.e., $\Upsilon=0$, then $\theta_t^{(2)}=0$, so we would necessarily have $\V=\infty$ (i.e., Hessian averaging is not helpful when we have the exact Hessian). In the above example, all of the transition points $\I, \U, \V$ are within constant factors of one another (not even logarithmic factors), while we sacrifice only $\sqrt{\log(t)}$ in the final superlinear rate. The above results recover Theorem~\ref{thm:main-2}. This suggests that, using the weight sequence $w_t=(t+1)^{\log(t+1)}$, the averaging scheme transitions from the global phase to the local superlinearly convergent phase smoothly,~and the local neighborhood that separates the two phases is consistent with the neighborhood of classical Newton methods that separate the global phase (i.e., the damped Newton phase) and the local quadratically convergent phase (cf. Lemma \ref{lem:6} with $\tHb_t=\Hb_t$). 
\end{example}

\section{Numerical Experiments}\label{sec:5}

We implement the proposed Hessian averaging schemes and compare them with popular baselines including BFGS, subsampled Newton methods and sketched Newton methods\footnote{All results can be reproduced via \url{https://github.com/senna1128/Hessian-Averaging}.}. We focus on the regularized logistic regression problem 
\begin{equation*}
\min_{\xb\in \mR^d} \;\; \frac{1}{n}\sum_{i=1}^{n}\log\Big(1+\exp\big(-b_i \ab_i^\top\xb\big)\Big) + \frac{\nu}{2}\nbr{\xb}^2,
\end{equation*}
where $\{(b_i, \ab_i)\}_{i=1}^n$ with $b_i\in\{-1,1\}$ are input-label pairs. We let $\Ab = (\ab_1, \ab_2, \ldots, \ab_n)^\top\in \mR^{n\times d}$ be the data matrix.

\vskip3pt
\noindent\textbf{Data generating process:} We generate several data matrices with $(n,d,\nu) = (1000,100,10^{-3})$, varying their properties. First, we vary the coherence of~$\Ab$, since higher coherence leads to higher variance for subsampled Hessian estimates, compared to sketched Hessian estimates. We use $\tau_{\Ab}$ to denote the coherence~of $\Ab$, which is defined as 
\begin{equation*}
\tau_{\Ab} = \frac{n}{d}\cdot \max_{i=1,\ldots,n}\|\Ub_{i,\cdot}\|^2 
\end{equation*}
where $\Ab =  \Ub\Sigmab\Vb^\top$ is the reduced singular value decomposition of $\Ab$ with~$\Ub\in \mR^{n\times d}$ and $\Vb\in \mR^{d\times d}$. Second, we vary the condition number $\kappa_{\Ab}$ of $\Ab$, since (as suggested by our theory), higher condition number leads to slower convergence and delayed transitions to superlinear rate. The detailed generalization of $\Ab$ is as follows. 
We fix $\Vb = \Ib\in \mR^{d\times d}$. For low coherence, we generate a $n\times d$~random matrix with each entry being independently generated from $\N(0,1)$, and let $\Ub$ be the left singular vectors of that matrix. For high coherence, we divide each row of $\Ub$ by $\sqrt{\zb_i}$, where $\zb_i$ is independently sampled from~Gamma~distribution with shape 0.5 and scale 2. We observe that the coherence of the low coherence matrix is $\approx 1$ (minimal), while the coherence of the high coherence matrix is $\approx 10=n/d$ (maximal). For either low or high coherence, we vary $\kappa_{\Ab}=\{d^{0.5}, d, d^{1.5}\}$.~For each $\kappa_{\Ab}$, we let singular values in $\Sigmab$ vary from $1$ to $\kappa_{\Ab}$ with equal spacing, and finally form the matrix $\Ab = \Ub\Sigmab$. We also generate~a~random~vector~$\xb\sim \N(\0, 1/d\cdot\Ib)$, and the response $b_i\in\{-1,1\}$ with $P(b_i = 1) = 1/(1+\exp(-\ab_i^\top\xb))$, $\forall i=1,\ldots,n$.  

\begin{table}[!th]
\begin{tabular}{c|c|c||c|c|c|c|c}
\multicolumn{3}{c||}{Setup} &  \multicolumn{3}{c|}{Newton Sketch} & Subsampled & \multirow{2}{*}{BFGS}\\
\cline{1-6}
  $\tau_{\Ab}$&$\kappa_{\Ab}$ & $s$ &  Gaussian& CountSketch&LESS-uniform& Newton &\\
\hline\hline
\multirow{12}{*}{1} & \multirow{4}{*}{$ d^{0.5}$}& $0.25d$ & 67/\textbf{29}/35 & 67/\textbf{29}/35 & 66/\textbf{29}/35 & 67/\textbf{29}/36 &\multirow{4}{*}{177}\\

&&$0.5d$ & 53/\textbf{22}/25 & 53/\textbf{22}/25 & 53/\textbf{21}/25 & 53/\textbf{22}/25 &\\
&& $d$ & 38/\textbf{17}/19 & 38/\textbf{17}/18 & 37/\textbf{17}/19 & 38/\textbf{16}/18 &\\ 
&& $5d$& 16/\textbf{11}/\textbf{11} & 16/\textbf{11}/\textbf{11} & 16/\textbf{11}/12 & 12/\textbf{9}/\textbf{9} &\\    

\cline{2-8}
&\multirow{4}{*}{$d$}& $0.25d$ & ---/\textbf{41}/52 & ---/\textbf{41}/52 & ---/\textbf{41}/53 & ---/\textbf{45}/60 &\multirow{4}{*}{219}\\

&&$0.5d$& ---/\textbf{31}/34 & ---/\textbf{31}/35 & ---/\textbf{31}/35 & ---/\textbf{34}/39 &\\
&&$d$ & 244/\textbf{24}/\textbf{24} & 246/\textbf{24}/\textbf{24} & 243/\textbf{24}/\textbf{24} & 315/\textbf{26}/\textbf{26} &  \\
&&$5d$ & 20/16/\textbf{14} & 20/17/\textbf{14} & 20/16/\textbf{14} & 18/15/\textbf{13} &  \\

 \cline{2-8}
&\multirow{4}{*}{$d^{1.5}$}& $0.25d$ & ---/---/\textbf{80} & ---/---/\textbf{81} & ---/---/\textbf{98} & ---/---/\textbf{371} &\multirow{4}{*}{295}\\

&& $0.5d$& ---/---/\textbf{67} &---/---/\textbf{66} &---/---/\textbf{72} &---/---/\textbf{217}&\\
&&$d$  & 270/---/\textbf{59} &---/---/\textbf{59} &---/---/\textbf{59} &---/---/\textbf{122}&  \\
&&$5d$  & \textbf{27}/---/54& 97/---/\textbf{54} & \textbf{38}/---/54&---/---/\textbf{54}&  \\
\hline

\multirow{12}{*}{10} & \multirow{4}{*}{$d^{0.5}$}& $0.25d$ & 60/\textbf{29}/34 & 60/\textbf{29}/33 & 60/\textbf{33}/37 & 57/60/\textbf{51} &\multirow{4}{*}{178}\\

&&$0.5d$ & 48/\textbf{22}/24 & 47/\textbf{22}/24 & 46/\textbf{25}/26 &  45/45/\textbf{41} &\\
&& $d$& 38/\textbf{17}/18 & 35/\textbf{17}/18 & 34/\textbf{20}/\textbf{20} & 34/36/\textbf{33} &\\    
&& $5d$& 15/\textbf{11}/\textbf{11} & 16/\textbf{11}/\textbf{11} & 16/\textbf{12}/\textbf{12} & 26/17/\textbf{15} &\\    
\cline{2-8}
 
&\multirow{4}{*}{$d$} & $0.25d$ & ---/91/\textbf{52} & ---/90/\textbf{51} & ---/117/\textbf{63} & ---/242/\textbf{147} &\multirow{4}{*}{252}\\

&&$0.5d$& ---/74/\textbf{35} & ---/74/\textbf{36} & ---/93/\textbf{44} & ---/164/\textbf{106} &\\
&&$d$ & 101/65/\textbf{27} & 121/63/\textbf{27} & 151/74/\textbf{31} & 308/114/\textbf{69} &  \\
&&$5d$ & 21/51/\textbf{19} & 27/52/\textbf{18} & 25/55/\textbf{19} & 77/74/\textbf{25} &  \\
\cline{2-8}
    
&\multirow{4}{*}{$d^{1.5}$}& $0.25d$ & ---/538/\textbf{80} & ---/549/\textbf{78} & ---/754/\textbf{110} & ---/---/\textbf{294} &\multirow{4}{*}{273}\\

&&$0.5d$& ---/518/\textbf{62} & ---/499/\textbf{63} &---/575/\textbf{77} & ---/946/\textbf{187} &\\
&&$d$  & 235/475/\textbf{51} & 429/489/\textbf{53} & 729/510/\textbf{59} &---/659/\textbf{121} &  \\
&&$5d$    & \textbf{35}/431/43 & 53/422/\textbf{43} & \textbf{42}/438/43 & 321/462/\textbf{51} &  
\end{tabular}
\caption{We show the median over 50 runs of the	number of iterations until \mbox{convergence}, i.e., until $ \|\xb_{t} - \ttxb\|_{\ttHb}\leq 10^{-6}$, where ``---'' indicates exceeding 999 iterations. In the case of stochastic methods, we provide three numbers for (the best one in bold): \textsf{NoAvg}, \textsf{UnifAvg}, and \textsf{WeightAvg}, which correspond to the standard method (without Hessian averaging), the method with uniform averaging ($w_t=t+1$, as in Theorem~\ref{thm:main-1}), and the method with weighted averaging ($w_t=(t+1)^{\log(t+1)}$, as in Theorem~\ref{thm:main-2}), respectively. For the setup, we use $\tau_{\Ab}$ to denote the coherence number of $\Ab$; $\kappa_{\Ab}$ to denote the condition number of $\Ab$; and  $s$ to denote the sample/sketch size for the stochastic~methods.} \label{tab:main}  
\end{table}

\vskip3pt
\noindent\textbf{Methods:} We implement the deterministic BFGS method, and stochastic subsampled and sketched Newton methods. Given a batch size $s$, the subsampled Newton method computes the Hessian as 
\begin{equation*}
\hHb(\xb) = \frac{1}{s}\sum_{j\in \xi}l_j\cdot \ab_j\ab_j^\top + \nu\cdot \Ib
\quad \text{ with }\quad l_j =
\frac{\exp(-b_j\ab_j^\top\xb)}{\big(1+\exp(-b_j\ab_j^\top\xb)\big)^2}, 
\end{equation*}
where the index set $\xi$ has size $s$ and is generated uniformly from $[1,n]$ without replacement. The sketched Newton method instead computes the Hessian as 
\begin{equation*}
\hHb(\xb) = \Ab^\top\Db^{1/2}\Sb^\top\Sb\Db^{1/2}\Ab + \nu\cdot\Ib \quad \text{ with } \quad \Db = \diag\rbr{l_1,\ldots, l_n},
\end{equation*}
where $\Sb\in \mR^{s\times n}$ is the sketch matrix. We consider three sketching methods, one dense (slow and accurate) and two sparse variants (fast but less accurate). 

\noindent (i) Gaussian sketch: we let $\Sb_{i,j}\stackrel{iid}{\sim} \N(0, 1)$. 

\noindent (ii) CountSketch \citep{Clarkson2017Low}: for each column of $\Sb$, we randomly pick a row and realize a Rademacher variable. 

\noindent (iii) LESS-Uniform \citep{Derezinski2021Newton}: for each row of $\Sb$, we randomly pick a fixed number of nonzero entries and realize a Rademacher variable in each nonzero entry.~We~let the number of nonzero entries in each row to be $0.1d$. 

\noindent For (i)--(iii), the sketches are scaled appropriately so that $\mE[\hHb(\xb)]=\Hb(\xb)$.

For each of the above Hessian oracles (both sketched and subsampled), we consider three variants of the stochastic Newton methods, depending on how the final Hessian estimate is constructed. 

\noindent (1) \textsf{NoAvg}: the standard method that uses the oracle estimate directly; 

\noindent (2) \textsf{UnifAvg}: the uniform Hessian averaging (i.e., $w_t=t+1$ from Theorem~\ref{thm:main-1}); 

\noindent (3) \textsf{WeightAvg}: the universal weighted Hessian averaging (i.e., $w_t=(t+1)^{\log(t+1)}$ from Theorem~\ref{thm:main-2}).

\begin{figure}[!th]
	\centering     %%% not \center
	\subfigure[$\tau_{\Ab}\approx1$, $\kappa_{\Ab}= \sqrt{d}$, $s=d$]{\label{kappa1}\includegraphics[width=50mm]{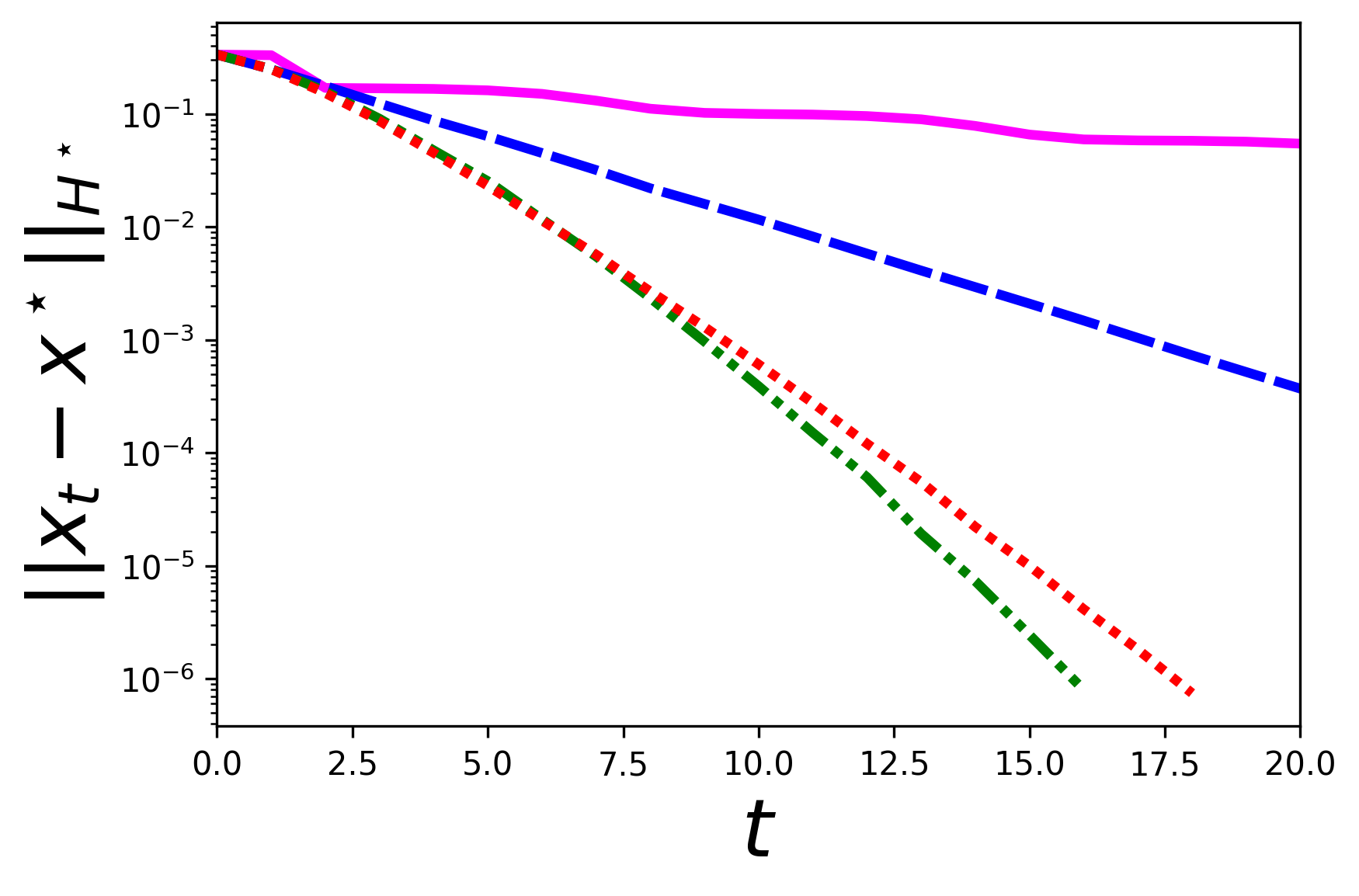}}
	\subfigure[$\tau_{\Ab}\approx1$, $\kappa_{\Ab} = d$, $s=d$]{\label{kappa2}\includegraphics[width=50mm]{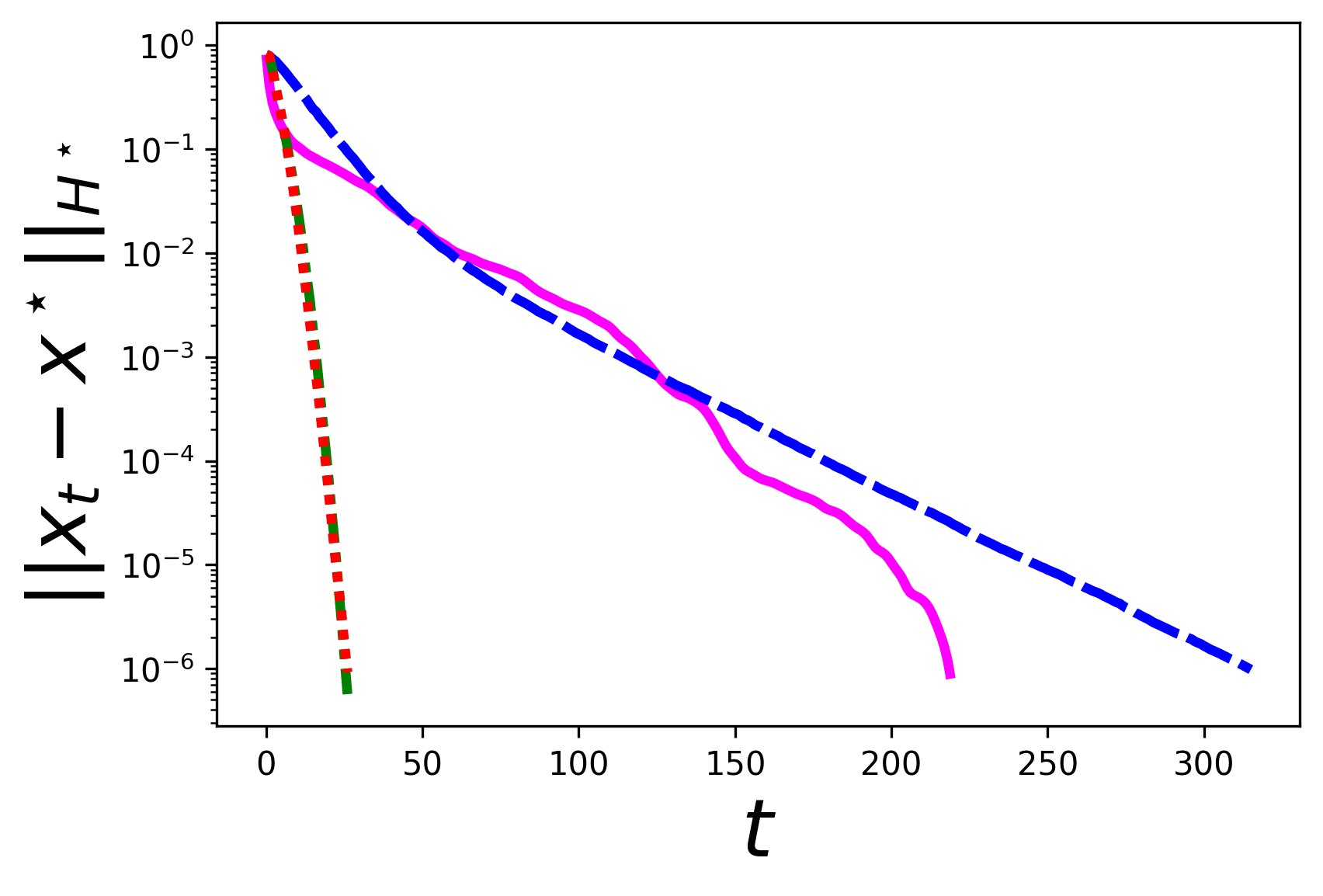}}
	\subfigure[$\tau_{\Ab}\approx1$, $\kappa_{\Ab} = d^{1.5}$, $s=d$]{\label{kappa3}\includegraphics[width=50mm]{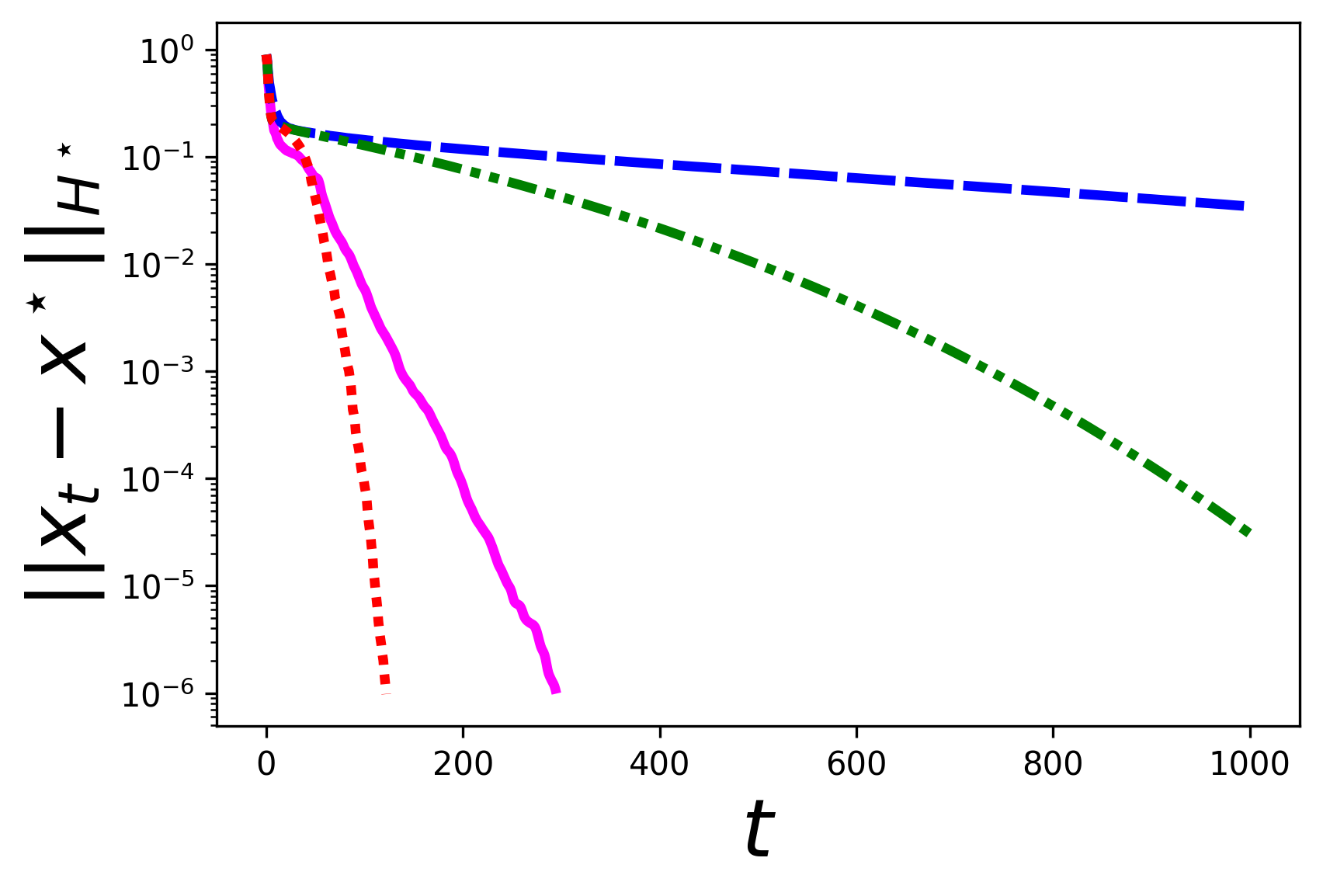}}
	
	\subfigure[$\tau_{\Ab}\approx1$, $\kappa_{\Ab} = \sqrt{d}$, $s=5d$]{\label{kappa4}\includegraphics[width=50mm]{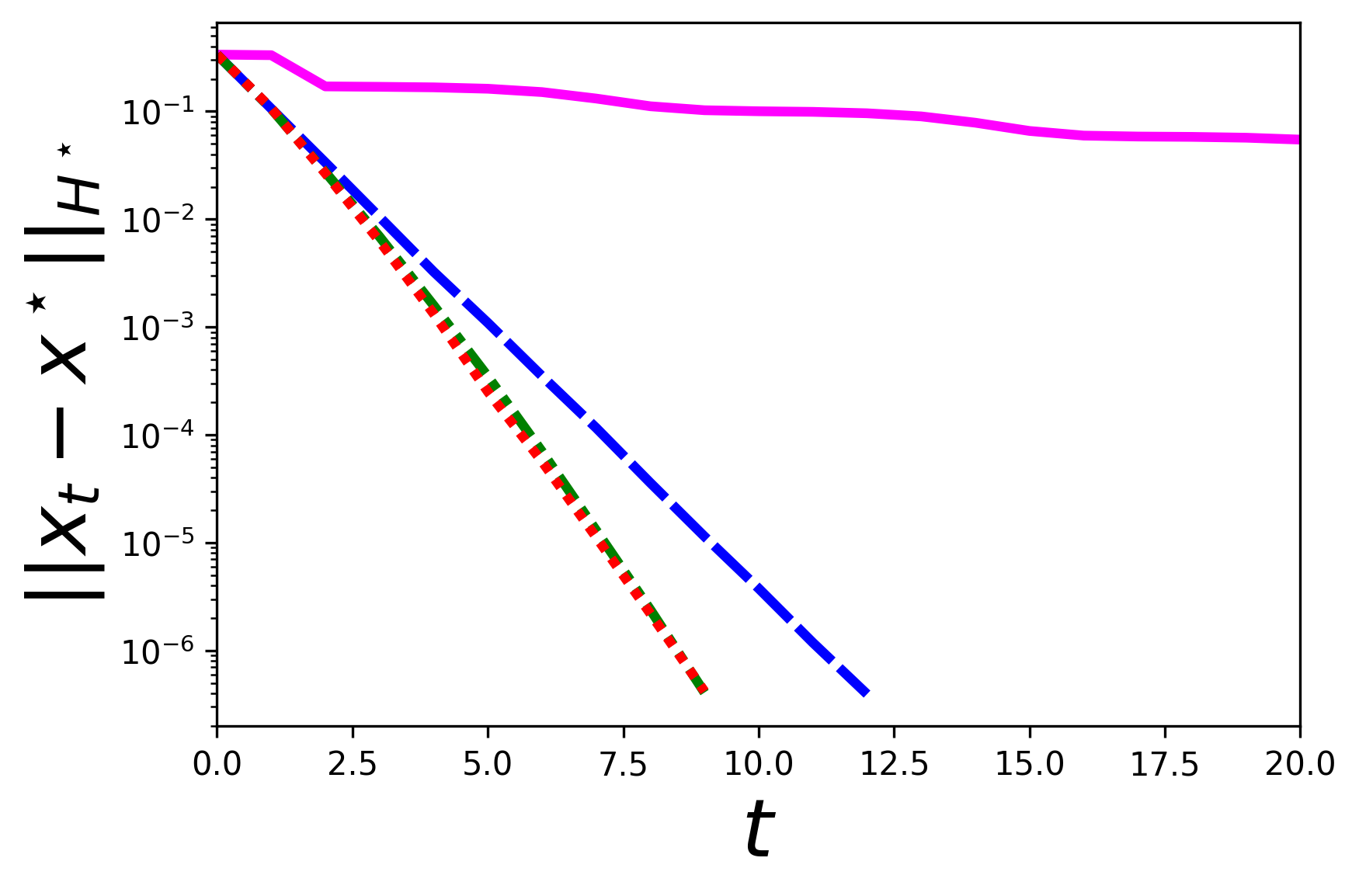}}
	\subfigure[$\tau_{\Ab}\approx1$, $\kappa_{\Ab} = d$, $s=5d$]{\label{kappa5}\includegraphics[width=50mm]{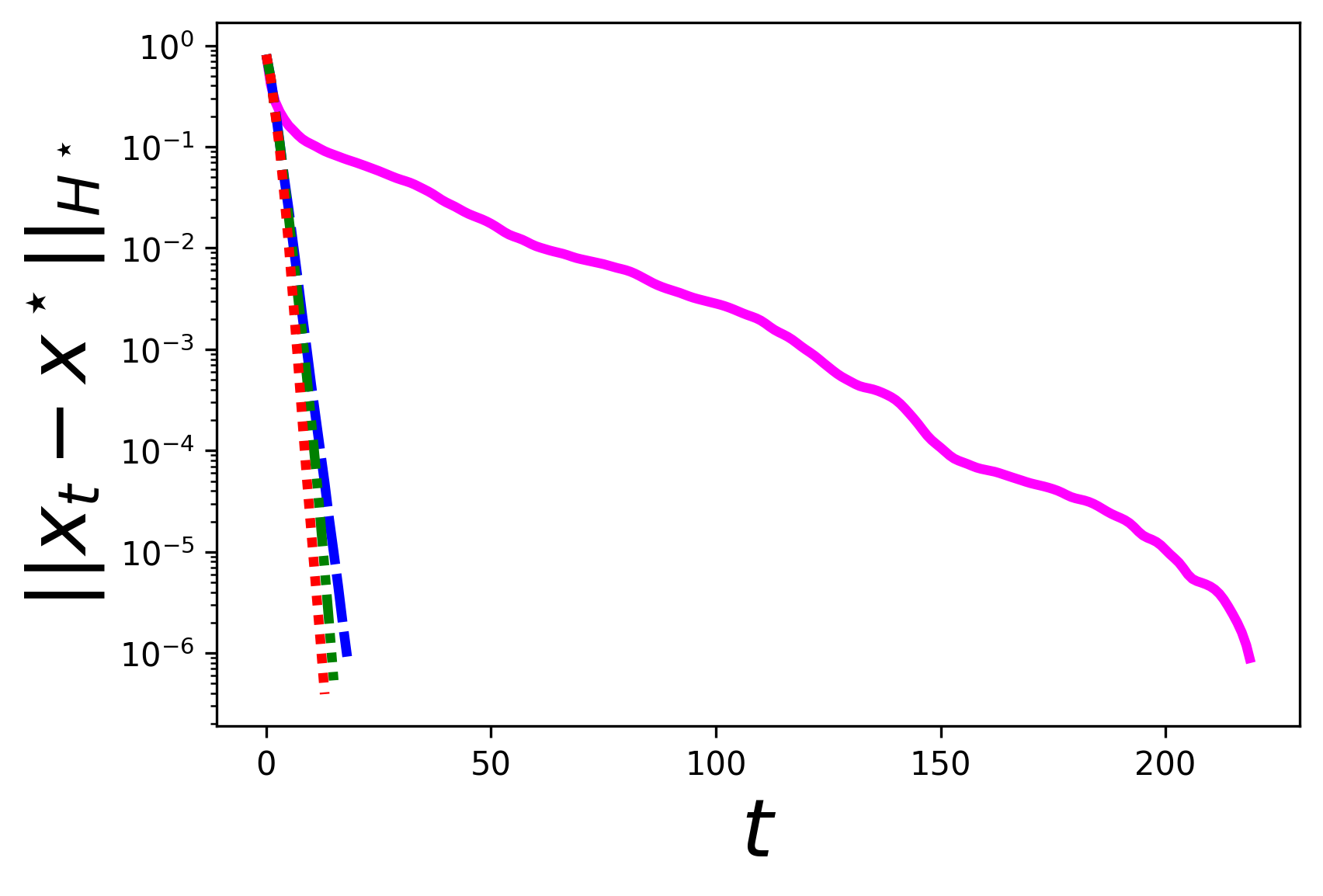}}
	\subfigure[$\tau_{\Ab}\approx1$, $\kappa_{\Ab} = d^{1.5}$, $s=5d$]{\label{kappa6}\includegraphics[width=50mm]{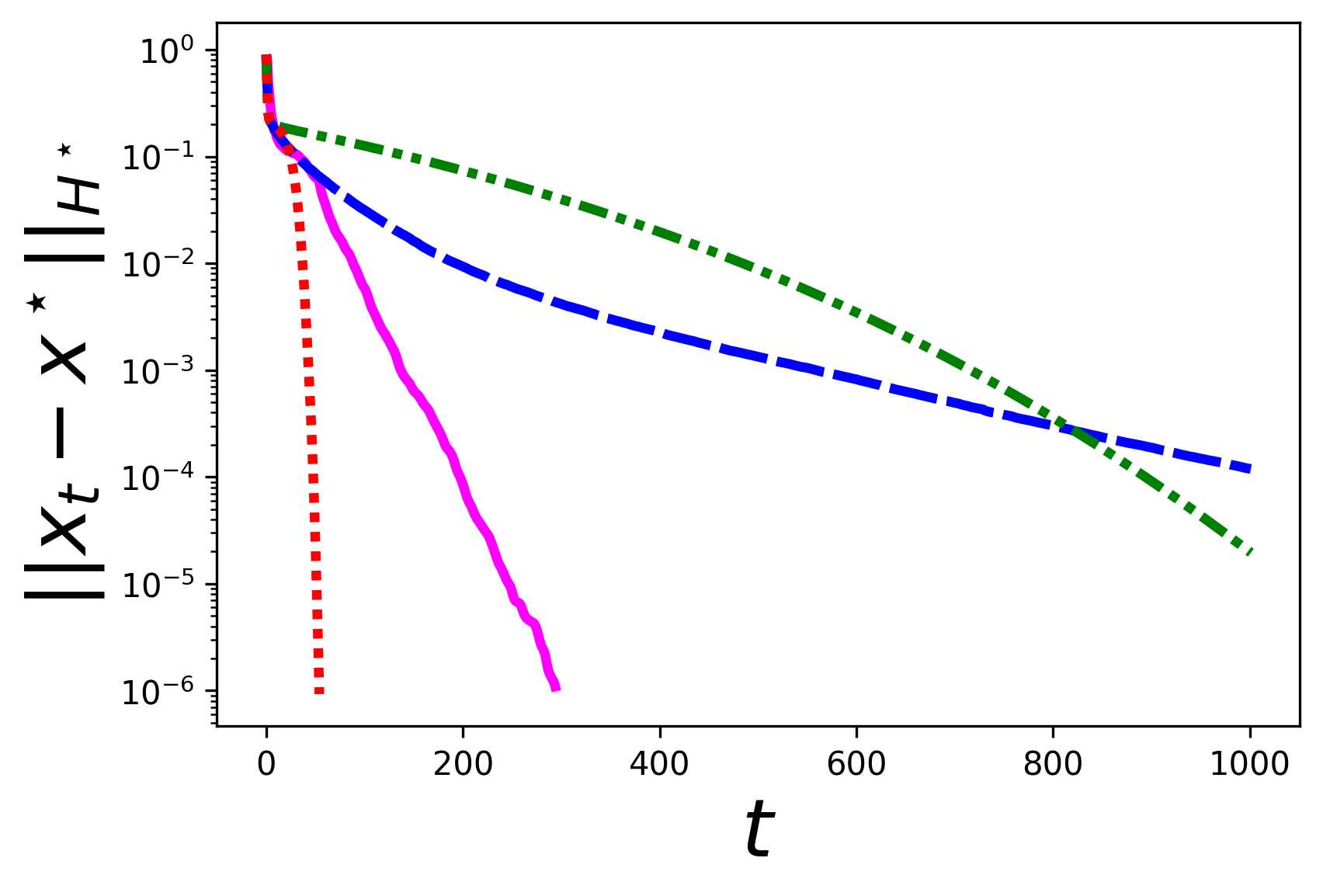}}
	
	\subfigure[$\tau_{\Ab}\approx10$, $\kappa_{\Ab}= \sqrt{d}$, $s=d$]{\label{kappa11}\includegraphics[width=50mm]{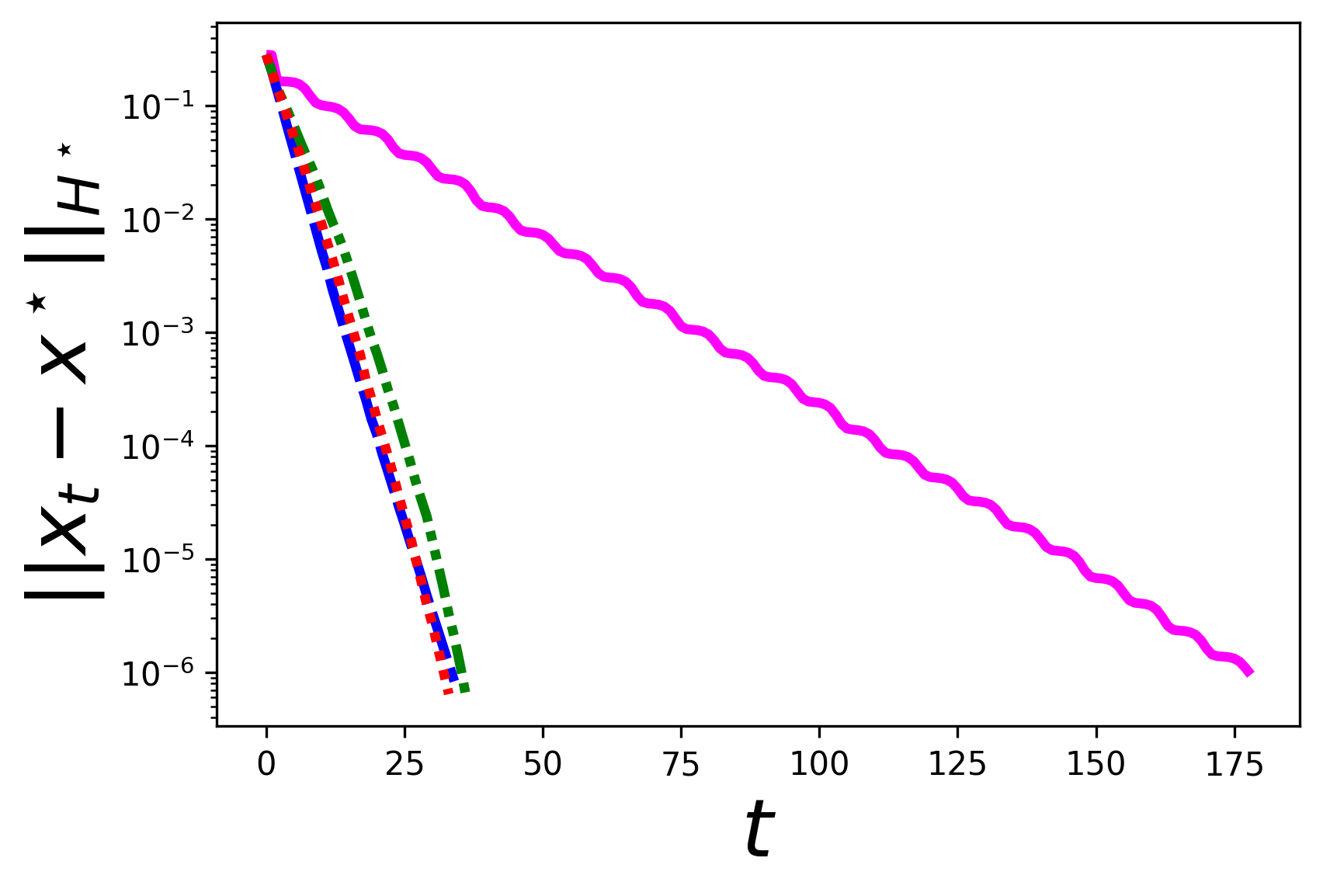}}
	\subfigure[$\tau_{\Ab}\approx10$, $\kappa_{\Ab} = d$, $s=d$]{\label{kappa12}\includegraphics[width=50mm]{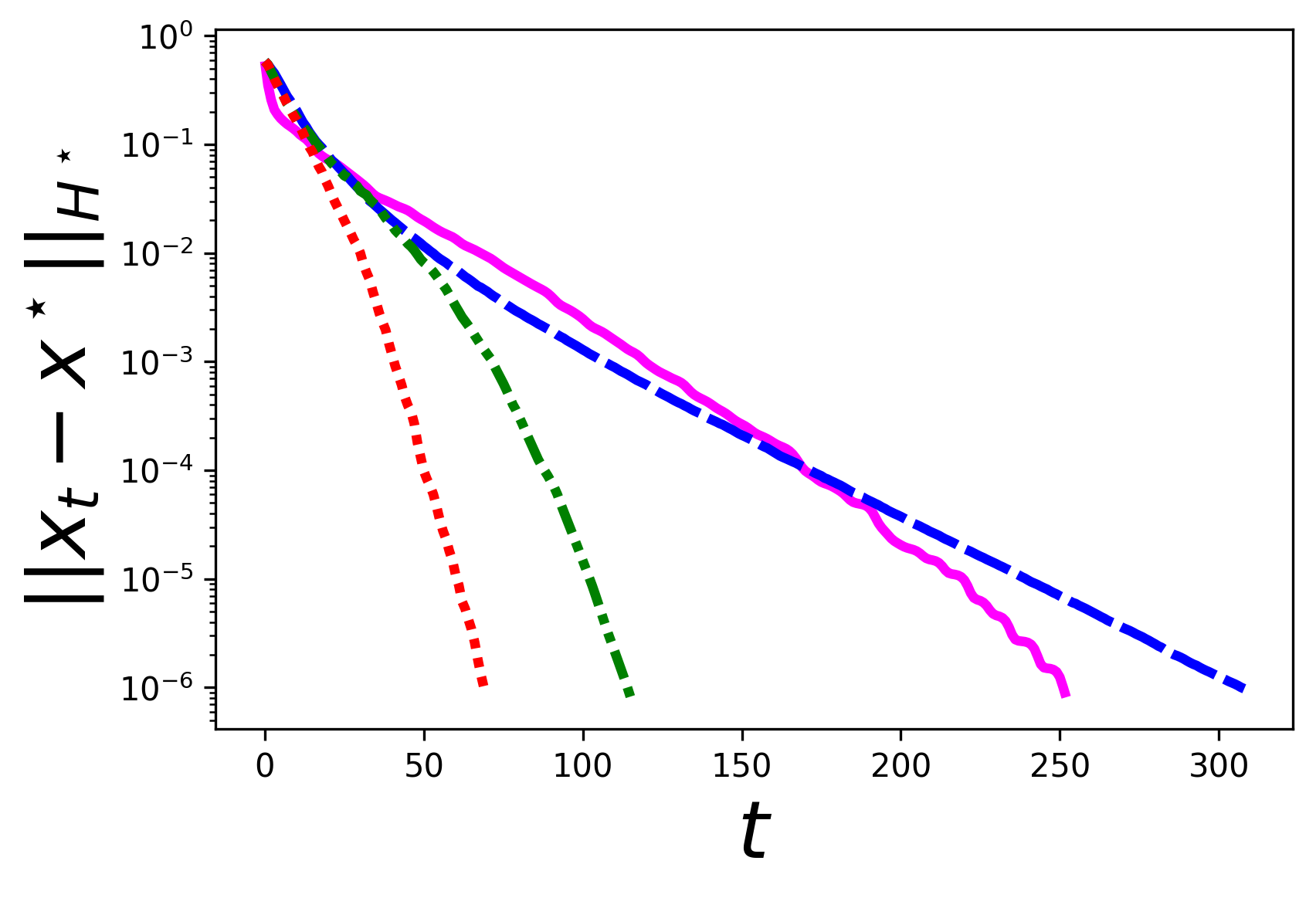}}
	\subfigure[$\tau_{\Ab}\approx10$, $\kappa_{\Ab} = d^{1.5}$, $s=d$]{\label{kappa13}\includegraphics[width=50mm]{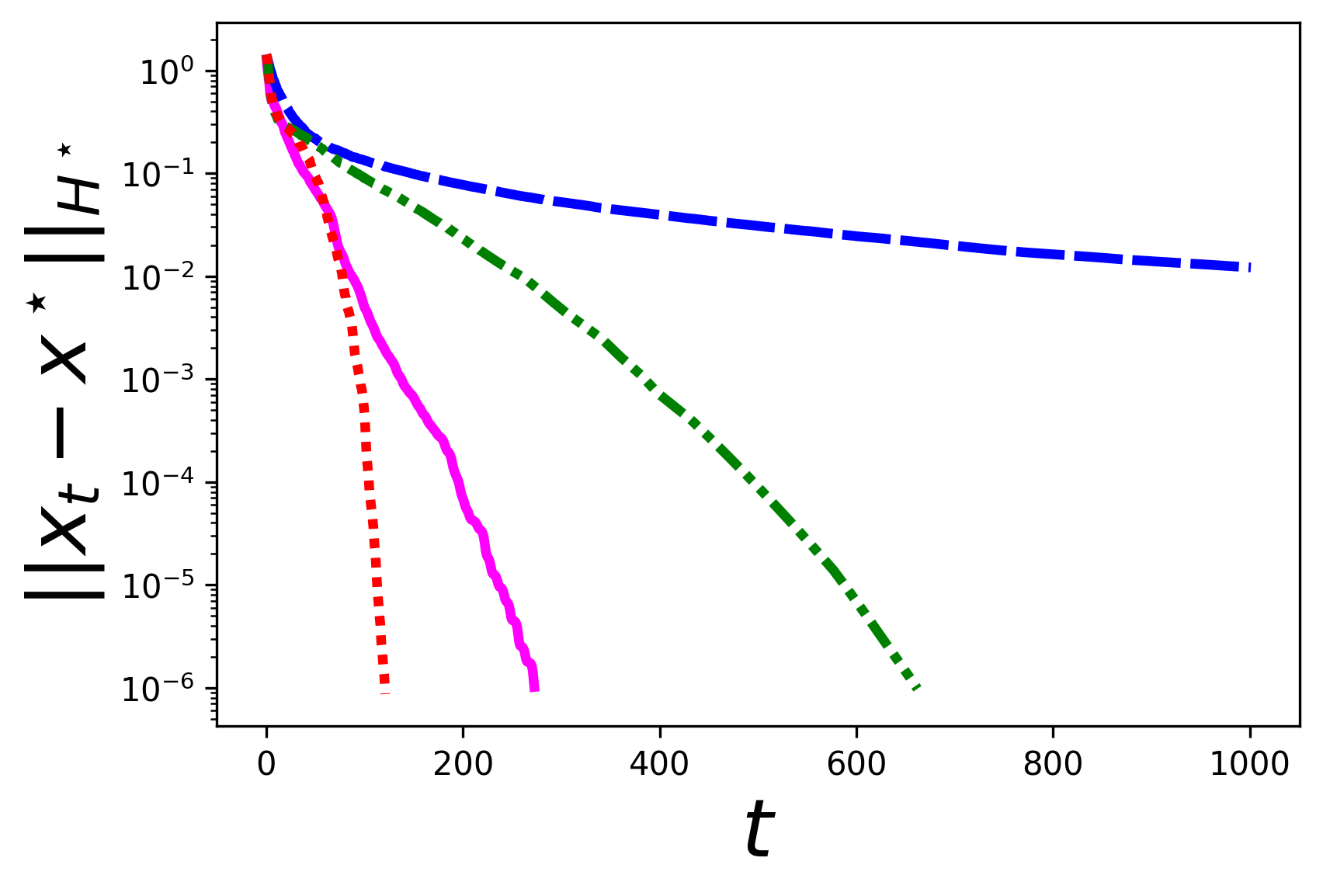}}
	
	\subfigure[$\tau_{\Ab}\approx10$, $\kappa_{\Ab} = \sqrt{d}$, $s=5d$]{\label{kappa14}\includegraphics[width=50mm]{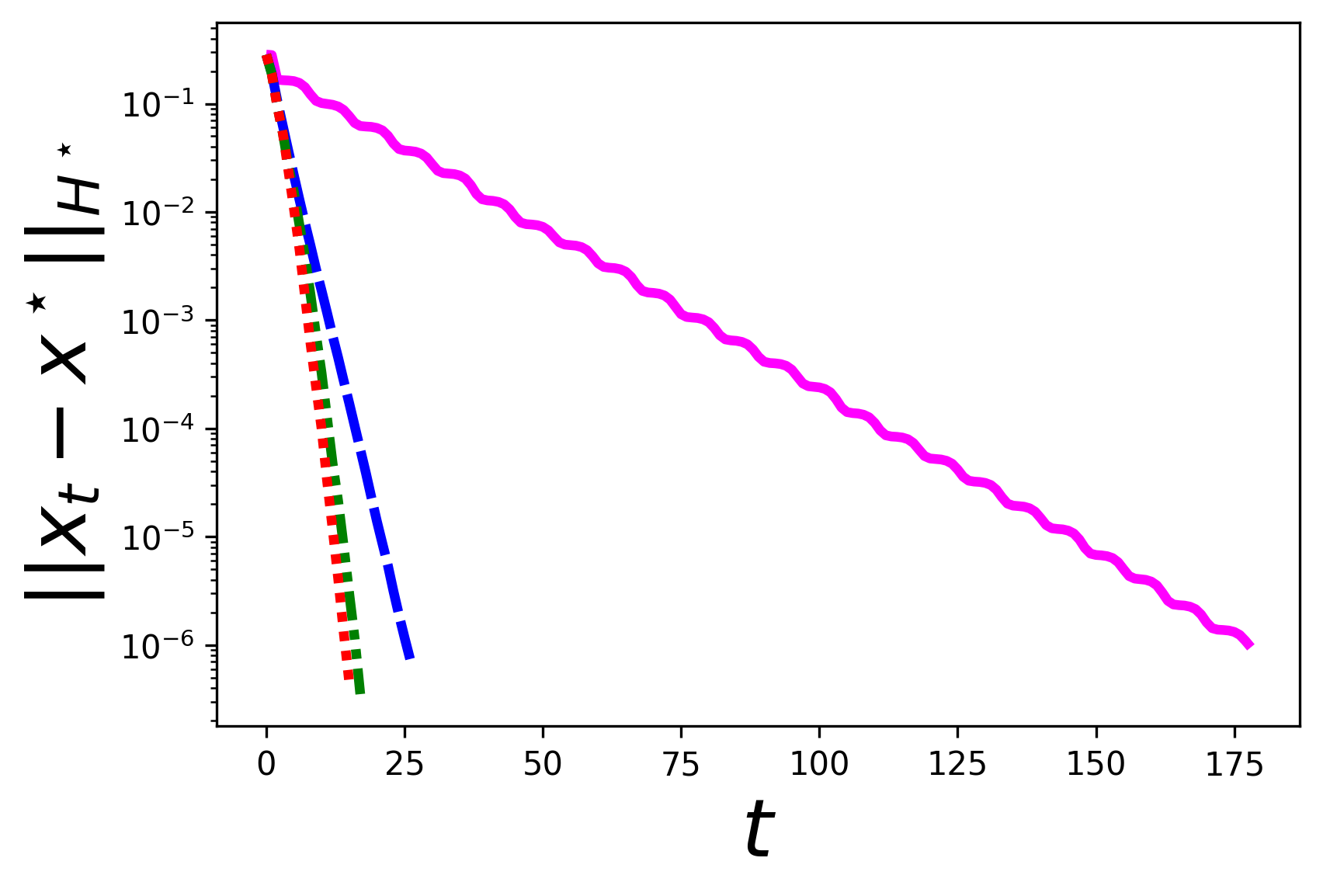}}
	\subfigure[$\tau_{\Ab}\approx10$, $\kappa_{\Ab} = d$, $s=5d$]{\label{kappa15}\includegraphics[width=50mm]{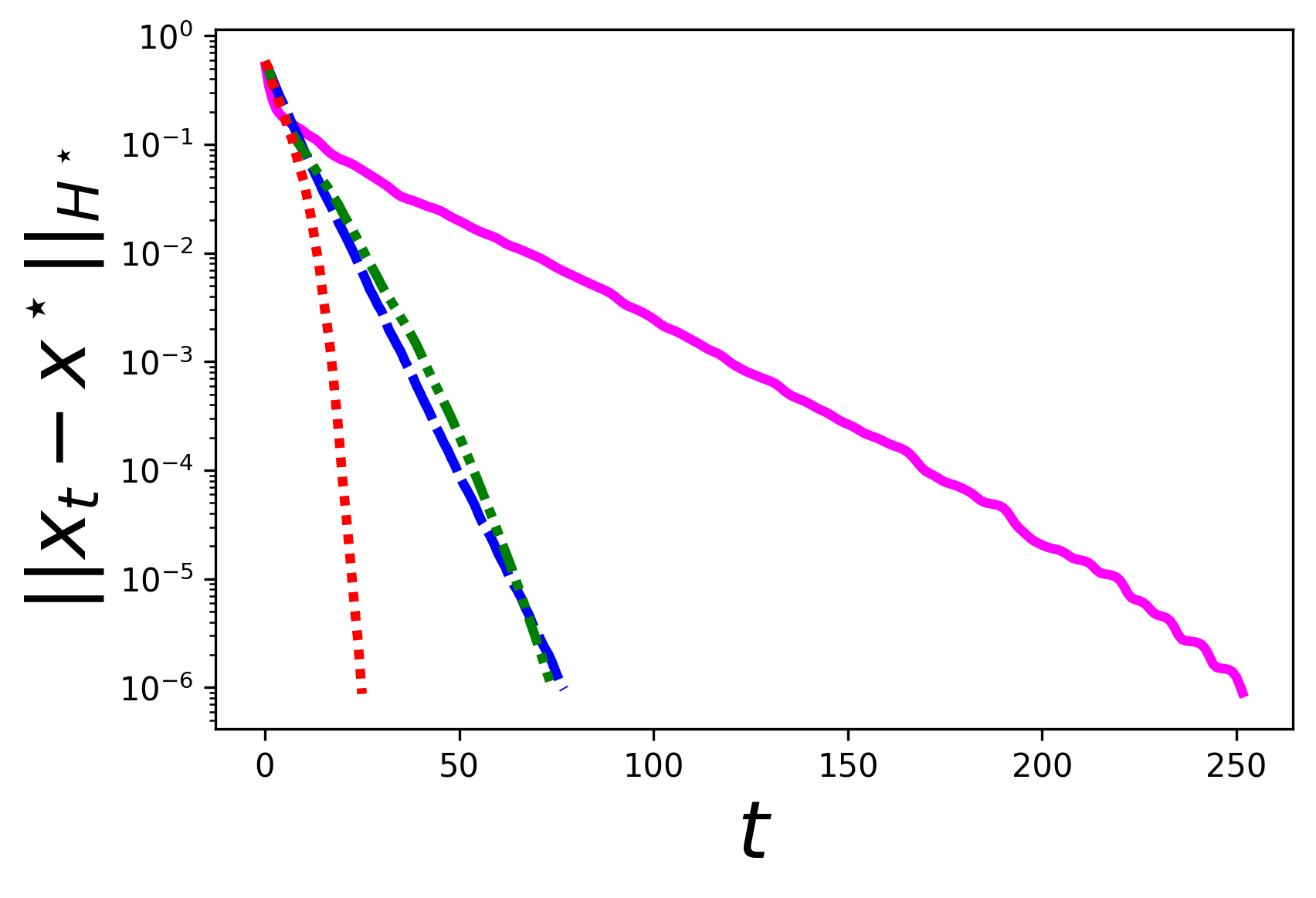}}
	\subfigure[$\tau_{\Ab}\approx10$, $\kappa_{\Ab} = d^{1.5}$, $s=5d$]{\label{kappa16}\includegraphics[width=50mm]{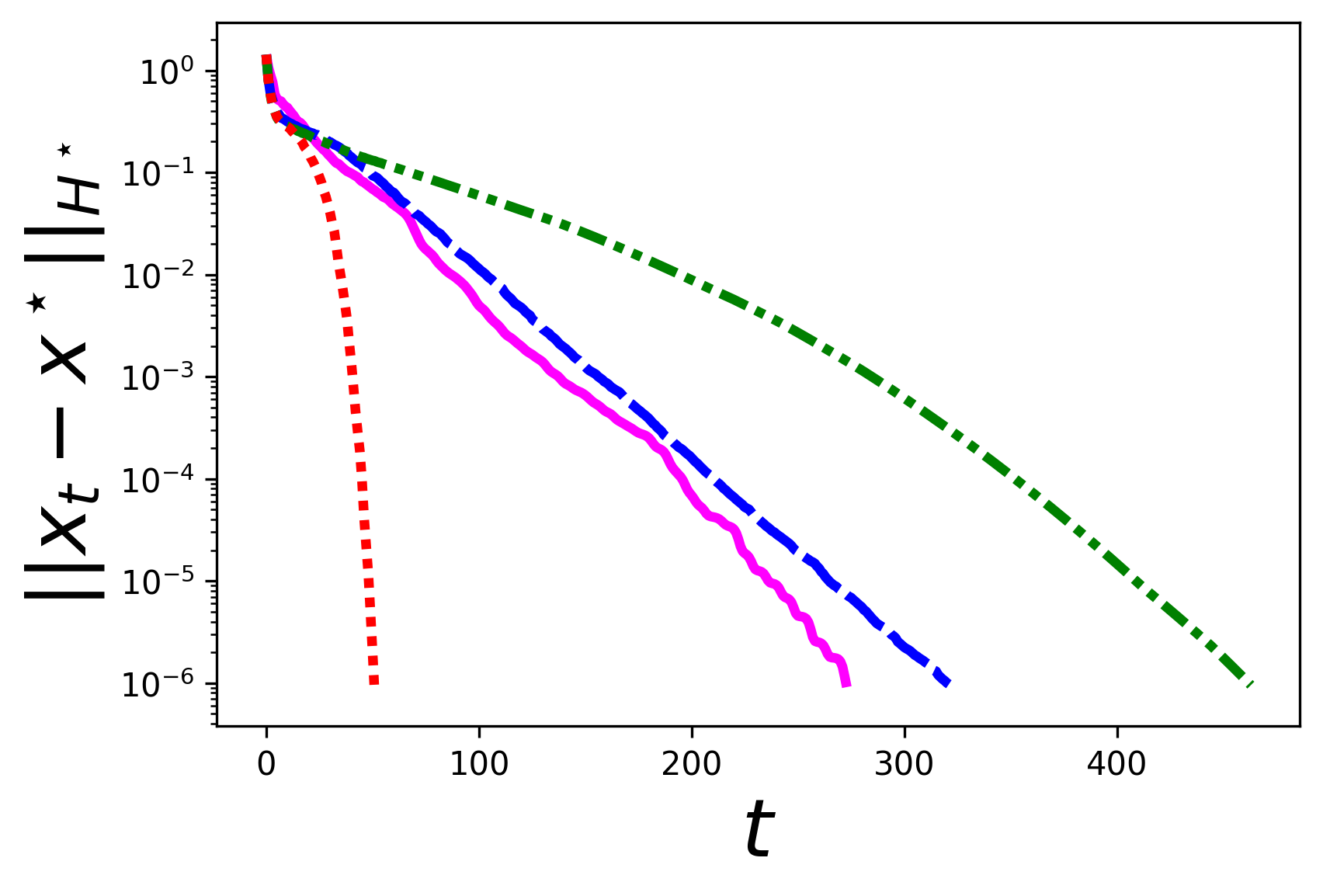}}
	
	\includegraphics[width=0.5\textwidth]{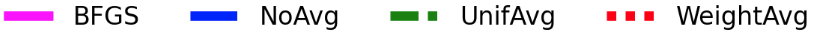}
	\caption{Convergence plots for Subsampled Newton with several Hessian averaging schemes, compared to the standard method without averaging (\noavg) and to BFGS. For all of the plots, we have $(n,d,\nu) = (1000, 100, 10^{-3})$. We vary the data coherence $\tau_{\Ab}$, condition number $\kappa_{\Ab}$ and the subsample size $s$.}\label{fig:cond}
\end{figure}

The overall results are summarized in Table \ref{tab:main}, where we show the number of iterations until convergence for each experiment (we use $\|\xb_{t} - \ttxb\|_{\ttHb}\leq 10^{-6}$ as the criterion). We display the median over 50 independent runs. The first general takeaway is that our proposed \textsf{WeightAvg} performs best overall, and it is the most robust to different problem settings, when varying the coherence $\tau_{\Ab}$, condition number $\kappa_{\Ab}$, sample size $s$, and sketch/sample type. In particular, among the three variants of \noavg/\unifavg/\weightavg, the latter is the only one to beat BFGS in all settings with sample size as small as $s=0.5 d$, as well as the only one to successfully converge within the iteration budget ($999$ iterations) for all Hessian oracles. Nevertheless, there are a few problem instances where either \noavg\ or \unifavg\ perform somewhat better than \weightavg, which can also be justified by our theory. In the cases where \noavg\ performs best, the oracle noise is particularly small (due to the use of a dense Gaussian sketch and/or a large sketch size), which means that averaging out the noise is not helpful until long after the method has converged very close to the optimum. On the other hand, the cases where \unifavg\ performs best are characterized by a well-conditioned objective function, where the superlinear phase of the optimization is reached almost instantly, so the slightly weaker superlinear rate of \weightavg\ compared to \unifavg\ manifests itself before reaching convergence (i.e., the additional factor of $\sqrt{\log(t)}$ in Theorem \ref{thm:main-2}).

To investigate the performance of Hessian averaging more closely, we present selected convergence plots in Figure \ref{fig:cond}. The figure shows decay of the error in the log-scale, so that a linear slope indicates linear convergence whereas a concave slope implies superlinear rate. Here, we used subsampling as the Hessian oracle, varying the coherence $\tau_{\Ab}$, the condition number $\kappa_{\Ab}$, and sample size $s$, and compared the Hessian averaging schemes \unifavg\ and \weightavg\ against the baselines of standard Subsampled Newton (i.e., \noavg) and BFGS.~We make the following observations:
 \begin{enumerate}[label=(\alph*),topsep=0pt]
\setlength\itemsep{0.0em}
\item Subsampled Newton with Hessian averaging (\unifavg\ and  \weightavg) exhibits a clear superlinear rate, observable in how its error plot curves~away from the linear convergence of \noavg. We note that BFGS also exhibits a superlinear rate, but only much later in the convergence process.

\item The gain from Hessian averaging (relative to \noavg) is more significant both for highly ill-conditioned problems (large condition number $\kappa$) and for noisy Hessian oracles (small sample size $s$). For example, in the setting of $(\kappa,s)=(d^{1.5},d)$ for both low and high coherence, standard Subsampled Newton (\noavg) converges orders of magnitude slower than BFGS, and~yet after introducing weighted averaging (\weightavg), it beats BFGS by a~factor of at least~2.5.

\item For small condition number, the two Hessian averaging schemes (\unifavg\ and \weightavg) perform similarly, although in the setting of $(\kappa,s)=(\sqrt d,d)$, the superlinear rate of \unifavg\ is slightly better than that of \weightavg\ (cf. Figure \ref{kappa1}), which aligns with a slightly better rate in Theorem \ref{thm:main-1} compared to Theorem \ref{thm:main-2}.

\item For highly ill-conditioned problems, \weightavg\ converges much faster than \unifavg. This is because, as suggested by Theorem~\ref{thm:main-1}, it takes much longer for \unifavg\ to transition to its fast rate. In fact, in the settings of $(\tau_{\Ab},\kappa_{\Ab},s)=(1, d^{1.5},5d)$ and $(\tau_{\Ab},\kappa_{\Ab},s)=(10,d,5d)$, \unifavg\ initially trails behind \noavg, which is a consequence of the distortion of the averaged Hessian by the noisy estimates from the early global convergence.
\end{enumerate}

\begin{figure}[!th]
\centering     %%% not \center
\subfigure[Gaussian]{\label{GG1}\includegraphics[width=75mm]{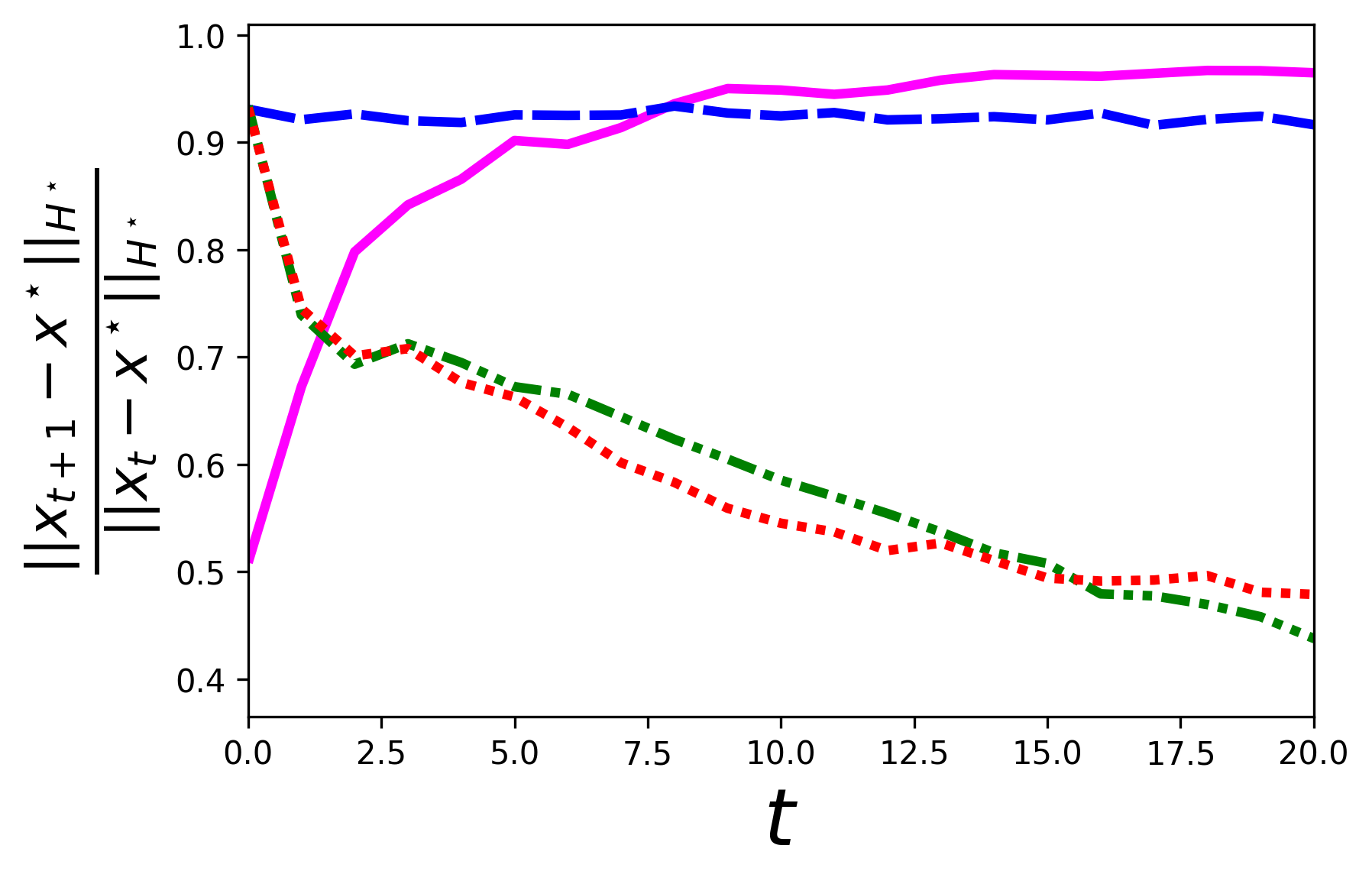}}
\subfigure[CountSketch]{\label{CC1}\includegraphics[width=75mm]{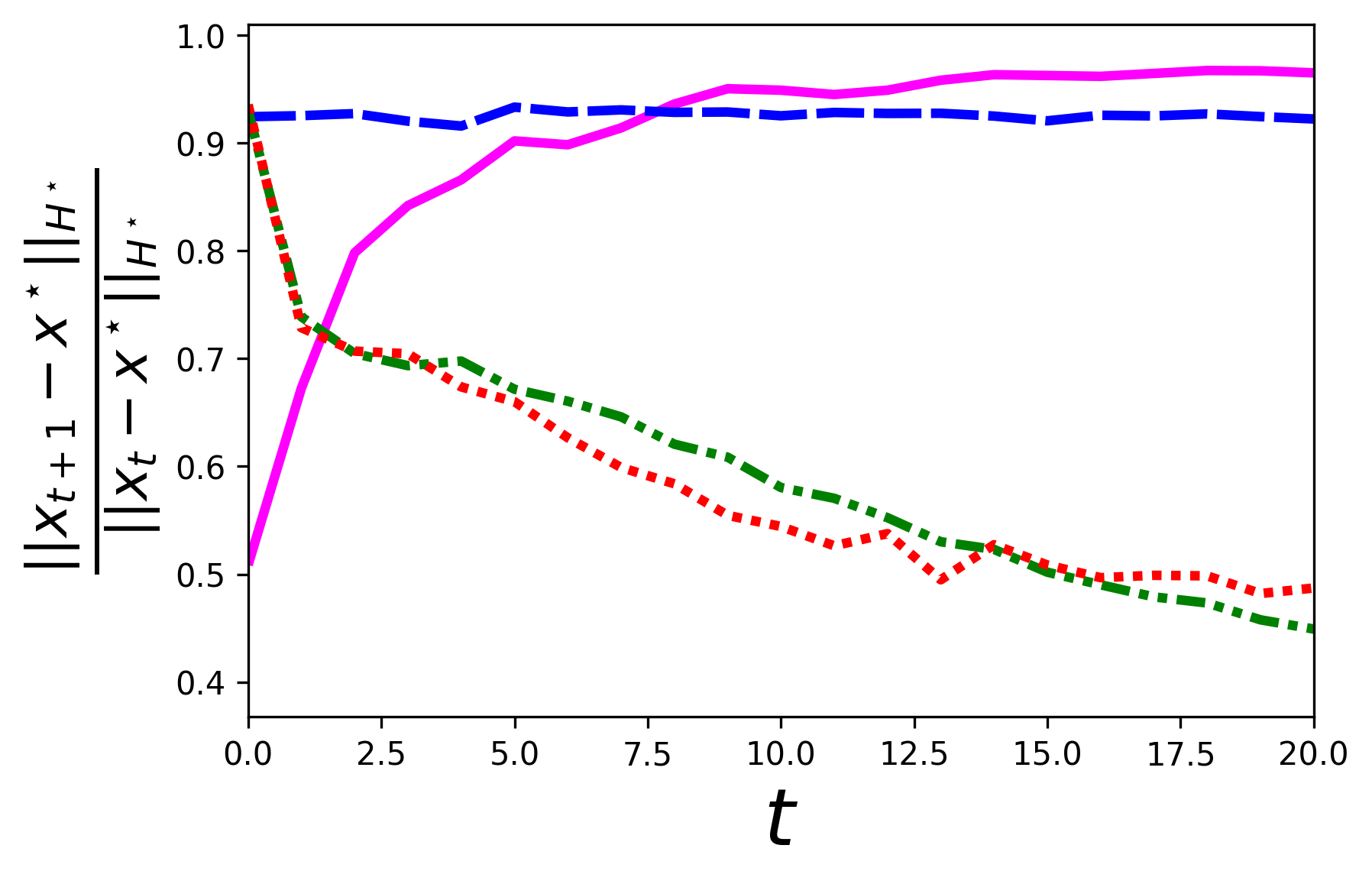}}
\subfigure[LESS-uniform]{\label{LL1}\includegraphics[width=75mm]{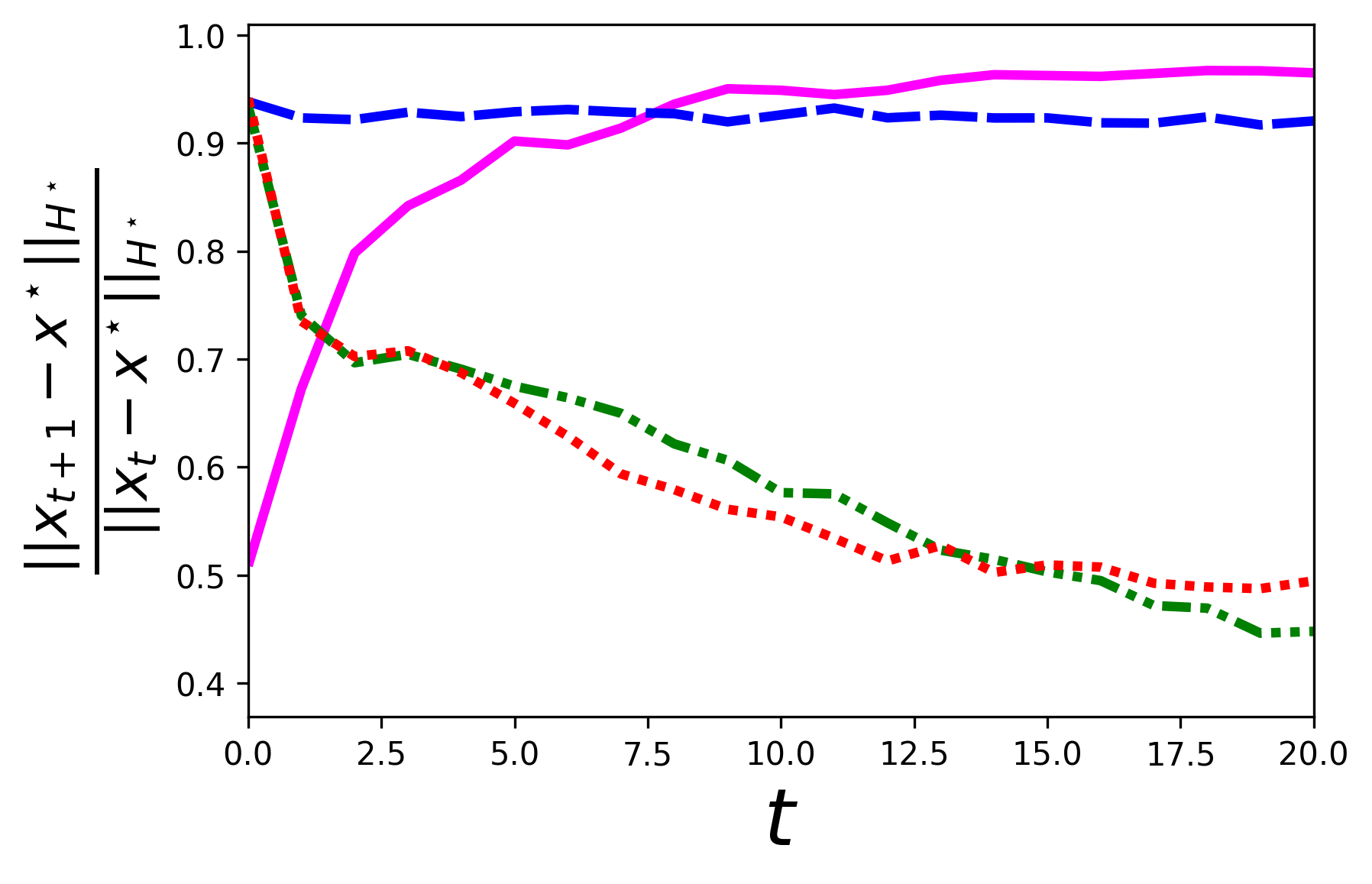}}
\subfigure[Subsampled]{\label{SS1}\includegraphics[width=75mm]{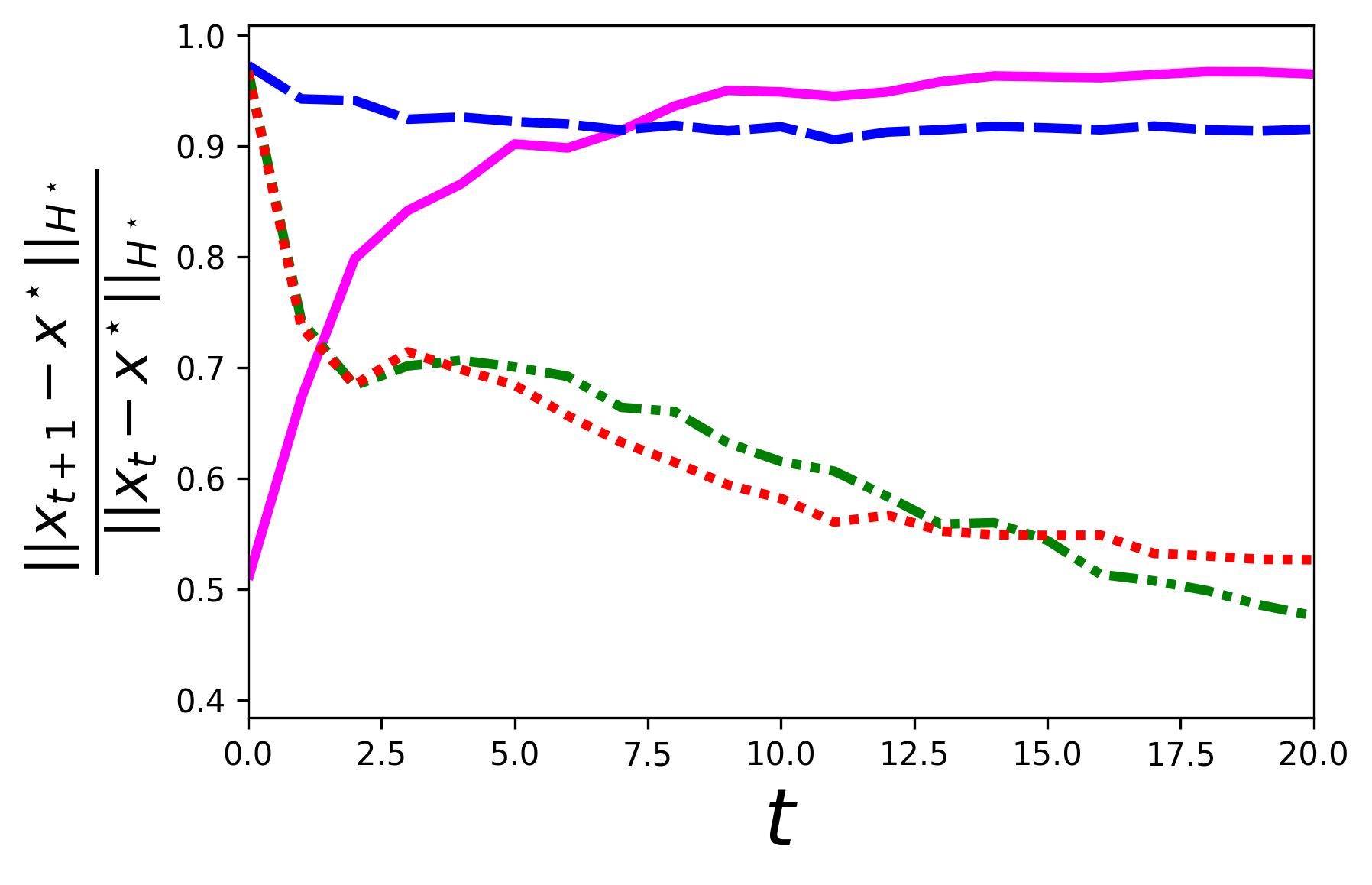}}
\includegraphics[width=0.5\textwidth]{Figure/legend}
\caption{Convergence rates for different Hessian oracles and
averaging schemes, with $(\tau_{\Ab},\kappa_{\Ab},s) = (1,d,d)$. We truncate iterations at 20 to highlight the superlinear rate of \unifavg\ and \weightavg.}\label{fig:rates}
\end{figure}

\vskip5pt
We next investigate the convergence rate of different methods, and aim to show that Hessian averaging leads to $Q$-superlinear convergence, while subsampled/sketched Newton (with fixed sample/sketch size) only exhibit $Q$-linear convergence. Figure \ref{fig:rates} plots $\|\xb_{t+1}-\ttxb\|_{\ttHb}/\|\xb_t-\ttxb\|_{\ttHb}$ versus $t$. From the figure, we indeed observe that our method with different weight sequences (\unifavg\ and \weightavg) always exhibits a $Q$-superlinear convergence, and~the superlinear rate exhibits the trend of $(1/t)^{t/2}$ matching the theory up to~logarithmic factors. We also observe that subsampled/sketched Newton methods with different sketch matrices (\noavg) always exhibit a $Q$-linear convergence. These observations are all consistent with our theory. 

\section{Conclusions}\label{sec:6}

This paper investigated the stochastic Newton method with Hessian averaging. In each iteration, we obtain a stochastic Hessian estimate and then average it with all the past Hessian estimates. We proved that the proposed method~exhibits a local $Q$-superlinear convergence rate, although the non-asymptotic rate and the transition points are different for different weight sequences. In particular, we proved that uniform Hessian averaging finally achieves $(\Upsilon\sqrt{\log t/t})^t$ superlinear rate, which is faster than the other weight sequences, but it may take as many as $O(\Upsilon^2 + \kappa^6/\Upsilon^2)$ iterations with high probability to get to this rate. We also observe that using weighted averaging $w_t = (t+1)^{\log(t+1)}$, the averaging scheme transitions from the global phase to the superlinear phase smoothly after $O(\Upsilon^2+\kappa^2)$ iterations with high probability, with only a slightly slower rate of $(\Upsilon\log t/\sqrt{t})^t$.

One of the future works is to apply our Hessian averaging technique on constrained nonlinear optimization problems. \cite{Na2022adaptive, Na2021Inequality} designed various stochastic second-order methods based on sequential quadratic programming (SQP) for solving constrained problems, where the Hessian of the Lagrangian function~was estimated by subsampling. These works established the global convergence for stochastic SQP methods, while the local convergence rate of these methods~remains unknown. On the other hand, based on our analysis, the local rate of Hessian averaging schemes is induced by the central limit theorem,~which~we~can also apply on the noise of the Lagrangian Hessian oracles. Thus, it is possible~to design a stochastic SQP method with Hessian averaging and show a similar local superlinear rate for solving constrained nonlinear optimization problems. We note that a recent work \cite{Na2022Asymptotic} utilized the Hessian averaging to prove a local sublinear rate for stochastic SQP methods, while that result is not as~strong~as the one in this paper. Further, for unconstrained convex optimization problems, generalizing our analysis to enable stochastic gradients and function evaluations as well as inexact Newton directions is also an important future work, which can further improve the applicability of the algorithm. Finally, given any iteration~threshold $\T$, establishing the probability that the first/second/third transition occurs before $\T$ is an interesting open question.

% \section*{Acknowledgments}

\bibliographystyle{my-plainnat}
\bibliography{ref}

\appendix
%\pagebreak
\numberwithin{equation}{section}
\numberwithin{theorem}{section}

\section{Examples of Sub-exponential Noise}
\label{a:examples}

\begin{lemma}
Consider subsampled Newton as in \eqref{equ:SSN} with sample size $s$ and suppose $\|\nabla^2f_i(\xb)\|\leq \lambda_{\max}R$ for some $R>0$ and for all $i$. Then, we have $\Upsilon = O(\kappa R\sqrt{\log(d)/s}$). If we additionally assume all $f_i$ are convex, then we can improve this to $\Upsilon = O(\sqrt{\kappa R\log(d)/s}+\kappa R\log(d)/s\,)$. 
\end{lemma}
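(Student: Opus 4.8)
The plan is to fix a point $\xb$ and reduce the claim to a matrix Bernstein bound for a sum of independent, bounded, mean-zero symmetric matrices, and then convert the resulting tail into a sub-exponential moment bound on $\|\Eb(\xb)\|$. Write $\Eb(\xb) = \hHb(\xb) - \Hb(\xb) = \frac1s\sum_{j=1}^s\Delta_j$, where $\Delta_j := \nabla^2 f_{i_j}(\xb) - \Hb(\xb)$ and $i_1,\ldots,i_s$ are drawn uniformly from $\{1,\ldots,n\}$. Since $\mE[\nabla^2 f_{i_j}(\xb)] = \Hb(\xb)$, each $\Delta_j$ is a symmetric, mean-zero random matrix, and under sampling with replacement they are independent (the without-replacement case only shrinks the variance and is dominated by the same bound). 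The object to control is the sub-exponential constant $\Upsilon_E$ of $\|\Eb(\xb)\|$, after which $\Upsilon = \Upsilon_E/\lambda_{\min}$ follows from Assumption \ref{ass:3}. Note that at a single fixed $\xb$ the summands are genuinely independent, so classical matrix Bernstein applies directly; alternatively one may view $\{\Delta_j/s\}$ as a martingale difference and invoke the specialization of Lemma \ref{lem:2}/Theorem \ref{thm:2} to equal weights.

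The two parameters driving matrix Bernstein are the uniform norm bound and the matrix variance. For the norm bound, $\|\Delta_j\| \le \|\nabla^2 f_{i_j}(\xb)\| + \|\Hb(\xb)\| \le \lambda_{\max}R + \lambda_{\max} = O(\lambda_{\max}R)$, so each rescaled summand $\Delta_j/s$ has norm at most $L = O(\lambda_{\max}R/s)$. For the variance $v = \frac1s\|\mE[\Delta_1^2]\|$ I would bound $\|\mE[\Delta_1^2]\|$ in two ways. In the general case, $\Delta_1^2 \preceq \|\Delta_1\|^2\,\Ib \preceq O(\lambda_{\max}^2R^2)\,\Ib$, giving $\|\mE[\Delta_1^2]\| = O(\lambda_{\max}^2R^2)$. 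In the convex case I would instead exploit operator monotonicity: since $\0\preceq\nabla^2 f_j(\xb)\preceq\lambda_{\max}R\,\Ib$, we have $(\nabla^2 f_j(\xb))^2\preceq\lambda_{\max}R\cdot\nabla^2 f_j(\xb)$, and taking expectations together with $\mE[\Delta_1^2]\preceq\mE[(\nabla^2 f_{i_1}(\xb))^2]\preceq\lambda_{\max}R\,\Hb(\xb)\preceq\lambda_{\max}^2R\,\Ib$ improves the variance to $\|\mE[\Delta_1^2]\| = O(\lambda_{\max}^2R)$. This $R^2\to R$ reduction in the variance is precisely the source of the improvement under convexity, and obtaining it (i.e.\ the correct bookkeeping of the $\kappa$ and $R$ dependence in the variance) is the main obstacle of the proof.

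With these parameters, matrix Bernstein \citep[Theorems 6.1, 6.2 in][]{Tropp2011User} yields
\begin{equation*}
\mP\rbr{\|\Eb(\xb)\|\ge\eta}\le 2d\cdot\exp\rbr{\frac{-\eta^2/2}{v+L\eta/3}}.
\end{equation*}
I would then convert this mixed sub-Gaussian/sub-exponential tail into a sub-exponential constant by integrating $\mE[\|\Eb(\xb)\|^p] = \int_0^\infty p\eta^{p-1}\mP(\|\Eb(\xb)\|\ge\eta)\,d\eta$, where the factor $2d$ is absorbed as a $\log d$ term; this gives $\Upsilon_E = O(\sqrt{v\log d}+L\log d)$. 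Dividing by $\lambda_{\min}$ and substituting the two variance bounds produces the claimed rates: in the general case the $\sqrt{v\log d}$ term gives $O(\kappa R\sqrt{\log(d)/s}\,)$, dominating the $L\log d$ contribution once $s\ge\log d$; in the convex case the improved variance yields the stated $O(\sqrt{\kappa R\log(d)/s}+\kappa R\log(d)/s)$. The delicate steps are the operator-monotonicity variance estimate and the tail-to-moment conversion with the $\lambda_{\min}$ normalization, which together determine the exact exponents of $\kappa$ in the leading term.
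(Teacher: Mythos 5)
Your reduction and your treatment of the general (possibly non-convex) case are sound and essentially parallel to the paper's: the paper applies the matrix Hoeffding inequality \citep[Theorem 1.3 in][]{Tropp2011User} to get the purely sub-Gaussian tail $2d\exp(-c\eta^2s/(\lambda_{\max}^2R^2))$ and then absorbs the dimension factor into a $\log d$ by the same ``vacuous-bound'' trick you use, yielding $\Upsilon=O(\kappa R\sqrt{\log(d)/s})$. Your Bernstein variant differs only in carrying the extra additive term $\kappa R\log(d)/s$, which, as you note, is lower order once $s\gtrsim\log d$.

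The convex case, however, has a genuine gap: your bookkeeping does not produce the stated bound. From $\mE[\Delta_1^2]\preceq\lambda_{\max}R\,\Hb(\xb)\preceq\lambda_{\max}^2R\,\Ib$ you get $v\leq\lambda_{\max}^2R/s$, hence $\Upsilon_E=O(\lambda_{\max}\sqrt{R\log(d)/s}+\lambda_{\max}R\log(d)/s)$ and, after dividing by $\lambda_{\min}$, $\Upsilon=O(\kappa\sqrt{R\log(d)/s}+\kappa R\log(d)/s)$. The stated leading term $\sqrt{\kappa R\log(d)/s}$ is smaller than yours by a factor of $\sqrt{\kappa}$, so the final ``substitution'' step you assert does not check out. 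Moreover, this loss cannot be repaired by sharper variance bookkeeping inside matrix Bernstein: $\|\mE[\Delta_1^2]\|$ genuinely can be of order $\lambda_{\max}^2R$. For instance, let a $1/R$ fraction of the $f_j$ have Hessian $\lambda_{\max}R\,\bu\bu^\top+\lambda_{\min}(\Ib-\bu\bu^\top)$ for a fixed unit vector $\bu$, and the rest have Hessian $\lambda_{\min}(\Ib-\bu\bu^\top)$; all $f_j$ are convex, $\|\nabla^2f_j\|\leq\lambda_{\max}R$, yet $\|\Eb(\xb)\|$ has standard deviation of order $\lambda_{\max}\sqrt{R/s}$, so the sub-exponential constant of the absolute error is at least of that order, and $\Upsilon\gtrsim\kappa\sqrt{R/s}$. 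The paper's own proof takes a different route that sidesteps the variance proxy entirely: it invokes the matrix Chernoff inequality \citep[Theorem 1.1 in][]{Tropp2011User}, whose deviations are multiplicative in the eigenvalues of the mean, so the sub-Gaussian part of the lower-edge deviation lives at scale $\mu_{\min}L=\lambda_{\min}\lambda_{\max}R/s$ (this is exactly the source of the $\sqrt{\kappa}$ saving, and the Poisson-like upper tail gives the $\lambda_{\max}R\log(d)/s$ term). Be aware, though, that Chernoff controls eigenvalue deviations relative to the mean---equivalently the relative error $\|\Hb(\xb)^{-1/2}\Eb(\xb)\Hb(\xb)^{-1/2}\|$ of Remark \ref{rem:1}---and the example above shows that the scale $\sqrt{\lambda_{\min}\lambda_{\max}R\log(d)/s}$ cannot hold for the spectral norm $\|\Eb(\xb)\|$ itself once $s\gg R\log^2 d$ and $\kappa$ is large. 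So your computation actually identifies the correct rate for the absolute-error model, and the claimed improvement is best read as a statement about the relative error; but as a reconstruction of the paper's proof of the lemma as stated, your convex case is missing its key ingredient (the multiplicative Chernoff bound) and the bound you derive is off by $\sqrt{\kappa}$.
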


\begin{proof}

The argument follows from the matrix Chernoff and Hoeffding inequalities \citep[Theorems 1.1 and 1.3]{Tropp2011User}. Note that the standard version of these concentration inequalities applies to sampling with replacement,~namely
\begin{equation*}
\hHb(\xb) = \frac1s \sum_{j=1}^s\nabla^2f_{I_j}(\xb),
\end{equation*}
where indices $I_1, I_2, ..., I_s$ are sampled from an index set uniformly at random. Alternate matrix concentration~inequalities for sampling without replacement are also known, e.g., see \cite{Gross2010Note, Wang2019Stochastic}. We thus take sampling with replacement as an example.

We start with the case where $f_i$ may not be convex. By the matrix Hoeffding inequality~\citep[Theorem~1.3, ][]{Tropp2011User}, we have for some small constants $c, c',c''>0$ and any $\eta>0$:
\begin{align*}
\mP\big(\|\hHb(\xb)-\Hb(\xb)\|\geq \eta\big)
&\leq 2d\exp\bigg(-\frac{c\eta^2s}{\lambda_{\max}^2R^2}\bigg) =\exp\bigg(-\log(2d)\Big(\frac{c\eta^2s}{\lambda_{\max}^2R^2\log(2d)}-1\Big)\bigg) \\
&\leq 2\exp\bigg(-\frac{c'\eta^2s}{\lambda_{\max}^2R^2\log(2d)}\bigg) \leq 2\exp\bigg(-\frac{c''\eta}{\lambda_{\max}R\sqrt{\log (d)/s}}\bigg),
\end{align*}
where the last step follows because if the expression in the exponent is larger than $-\log(2)$, then the bound is vacuous, since the probability must lie in $[0,1]$. Thus, it follows that $\|\hHb(\xb)-\Hb(\xb)\|=\|\Eb(\xb)\|$ is a sub-exponential random variable with parameter $\Upsilon_E=O(\lambda_{\max}R\sqrt{\log(d)/s} )$ \cite[see Section 2.7 of][]{Vershynin2018High}. The claim now easily follows.

Next, suppose that $f_i$ are convex. This means that all $\nabla^2f_i(\xb)$ are positive semidefinite, and so is $\hHb(\xb)$. Thus we can use the matrix Chernoff inequality \citep[Theorem~1.1][]{Tropp2011User}, which provides a sharper guarantee:
\begin{align*}
\mP\big(\|\hHb(\xb)-\Hb(\xb)\|\geq \eta\big)
&\leq 2d\exp\bigg(-c\min\Big\{\frac{\eta^2s}{\lambda_{\min}\lambda_{\max}R},\frac{\eta s}{\lambda_{\max}R}\Big\}\bigg) \\
  &\leq
    2\exp\bigg(-c'\min\Big\{\frac{\eta^2s}{\lambda_{\min}\lambda_{\max}R\log(d)},\frac{\eta
    s}{\lambda_{\max}R\log(d)}\Big\}\bigg)
  \\
  &\leq
    2\exp\bigg(-c''\min\Big\{\frac{\eta}{\sqrt{\lambda_{\min}\lambda_{\max}R\log(d)/s}},\frac{\eta
    }{\lambda_{\max}R\log(d)/s}\Big\}\bigg)    
    \\
  &\leq 2\exp\bigg(-\frac{c''\eta}{\sqrt{\lambda_{\min}\lambda_{\max}R\log (d)/s} + \lambda_{\max}R\log(d)/s}\bigg). 
\end{align*}
It follows that $\|\Eb(\xb)\|$ is a sub-exponential random variable with parameter $\Upsilon_E=O( \lambda_{\max}R\log(d)/s \\+ \sqrt{\lambda_{\min}\lambda_{\max}R\log(d)/s}) = O(\lambda_{\min}(\sqrt{\kappa R\log(d)/s}+\kappa R\log(d)/s))$, and we get the claim. 
\end{proof}

\begin{lemma}
Consider Newton sketch as in \eqref{equ:SKN} with $\Sb\in \mR^{s\times n}$ consisting of i.i.d. $\mathcal N(0,1/s)$ entries. Then, we have $\Upsilon = O(\kappa(\sqrt{d/s}+ d/s))$. 
\end{lemma}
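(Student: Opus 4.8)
The plan is to reduce the $n$-dimensional Gaussian sketch to a $d$-dimensional Wishart concentration problem by whitening, and then invoke sharp bounds on the extreme singular values of Gaussian matrices. Writing the rows of $\Sb$ as $\mathbf{s}_1,\dots,\mathbf{s}_s\in\mR^n$, each $\mathbf{s}_i\sim\mathcal N(\0,\frac1s\Ib_n)$ independently, we have $\Sb^\top\Sb=\sum_{i=1}^s\mathbf{s}_i\mathbf{s}_i^\top$ and hence $\hHb=\sum_{i=1}^s(\Mb^\top\mathbf{s}_i)(\Mb^\top\mathbf{s}_i)^\top$. Since $\Mb^\top\mathbf{s}_i\sim\mathcal N(\0,\frac1s\Hb)$, we may realize $\Mb^\top\mathbf{s}_i=\frac1{\sqrt s}\Hb^{1/2}\mathbf{g}_i$ with $\mathbf{g}_i\sim\mathcal N(\0,\Ib_d)$ i.i.d., so that, collecting the $\mathbf{g}_i$ as rows of a matrix $\mathbf{G}\in\mR^{s\times d}$ of i.i.d.\ standard Gaussians,
\begin{equation*}
\hHb\stackrel{d}{=}\Hb^{1/2}\Big(\tfrac1s\mathbf{G}^\top\mathbf{G}\Big)\Hb^{1/2},\qquad\text{so that}\qquad \Eb=\hHb-\Hb\stackrel{d}{=}\Hb^{1/2}\Big(\tfrac1s\mathbf{G}^\top\mathbf{G}-\Ib_d\Big)\Hb^{1/2}.
\end{equation*}
The crucial gain here is that the effective dimension drops from $n$ to $d$. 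Taking spectral norms and using $\|\Hb^{1/2}\|^2=\|\Hb\|\le\lambda_{\max}$ gives $\|\Eb\|\le\lambda_{\max}\,Z$ with $Z\coloneqq\|\frac1s\mathbf{G}^\top\mathbf{G}-\Ib_d\|$; note also $\|\Hb^{-1/2}\Eb\Hb^{-1/2}\|\stackrel{d}{=}Z$, which is the basis for the $\kappa$-free bound on $\widetilde\Upsilon$ mentioned in Remark~\ref{rem:1}.

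Next I would control $Z$ using the standard non-asymptotic concentration for Gaussian singular values \citep{Vershynin2018High}: for $s\ge d$ and any $t\ge0$, with probability at least $1-2e^{-t^2/2}$ both singular values of $\mathbf{G}$ lie in $[\sqrt s-\sqrt d-t,\ \sqrt s+\sqrt d+t]$. Squaring and normalizing by $s$, and setting $t=\sqrt s\,\delta$, this translates into
\begin{equation*}
\mP\big(Z\ge 2u+u^2\big)\le 2e^{-s\delta^2/2},\qquad u=\sqrt{d/s}+\delta,\ \ \delta\ge0,
\end{equation*}
i.e.\ $Z$ is of the typical size $\sqrt{d/s}+d/s$ (the value at $\delta=0$) with sub-Gaussian fluctuations of scale $\delta$.

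Finally, I would convert this deviation inequality into the sub-exponential statement of Assumption~\ref{ass:3}, namely $\mP(\|\Eb\|\ge\eta)\le 2\exp(-c\,\eta/\Upsilon_E)$ with $\Upsilon_E=O(\lambda_{\max}(\sqrt{d/s}+d/s))$, from which $\Upsilon=\Upsilon_E/\lambda_{\min}=O(\kappa(\sqrt{d/s}+d/s))$ follows immediately. Solving $\eta=2u+u^2$ for $u$, then $\delta=u-\sqrt{d/s}$, and substituting into the exponent $s\delta^2/2$ yields the tail bound; this is where the main (though routine) obstacle lies, because the bound has two regimes separated by a crossover at $u\asymp1$, i.e.\ $\eta\asymp1$. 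For small deviations ($u\lesssim1$) the linear term $2u$ dominates and the tail is effectively sub-Gaussian with scale $\sqrt{d/s}$, whereas for large deviations ($u\gtrsim1$) the quadratic term $u^2$ dominates and the tail becomes genuinely exponential with scale $d/s$; matching both regimes against a single exponential $\exp(-c\eta/\Upsilon_E)$ is precisely what forces the two additive terms $\sqrt{d/s}+d/s$ in $\Upsilon_E$. I would organize this as a short case split, trivially absorbing the range $\eta\lesssim\Upsilon_E$ into the constant and verifying $s\delta^2/2\ge c\,\eta/\Upsilon_E$ separately in each regime for $\eta\gtrsim\Upsilon_E$, and then invoke the equivalence between sub-exponential tails and the moment bound defining Assumption~\ref{ass:3}, exactly as in the subsampling example above.
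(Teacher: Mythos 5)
Your proposal is correct and follows essentially the same route as the paper's proof: both work with the whitened quantity $\Hb(\xb)^{-1/2}\hHb(\xb)\Hb(\xb)^{-1/2}$, invoke a two-regime ($\delta\vee\delta^2$) Gaussian concentration bound, convert it to a single sub-exponential tail with parameter $O(\sqrt{d/s}+d/s)$, and then pass to $\Upsilon=O(\kappa(\sqrt{d/s}+d/s))$ via $\|\Eb(\xb)\|\leq\lambda_{\max}\|\Hb(\xb)^{-1/2}\Eb(\xb)\Hb(\xb)^{-1/2}\|$. The only cosmetic difference is that you reduce to a $d$-dimensional Wishart matrix and use exact Gaussian singular-value bounds, whereas the paper cites the sub-Gaussian-rows result (Theorem 4.6.1 of Vershynin), which is why the paper's argument extends verbatim to any sketch with i.i.d.\ sub-Gaussian entries while your whitening step uses Gaussianity.
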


\begin{proof}
In fact, the result holds as long as $\sqrt s\,\Sb$ has independent mean zero unit variance sub-Gaussian entries. From a standard result on the concentration of sub-Gaussian matrices \citep[Theorem~4.6.1 in][]{Vershynin2018High}, there exists a constant $c\geq 1$ such that, with probability $1 - 2\exp(-t^2)$,
\begin{equation}\label{nequ:4}
\|\Hb(\xb)^{-1/2}\hHb(\xb) \Hb(\xb)^{-1/2}-\Ib\|\leq \delta\vee \delta^2 \quad\quad \text{for any }\quad \delta \geq c\rbr{\sqrt{d/s} + t/\sqrt{s}}.
\end{equation}
Thus, for any $\eta>0$ such that $\eta \wedge\sqrt{\eta} \geq c\sqrt{d/s}\Longleftarrow \eta\geq c^2(d/s + \sqrt{d/s})$, we let $t = \sqrt{s}(\eta\wedge\sqrt{\eta})/c$ and have
\begin{equation*}
c\rbr{\sqrt{d/s} + t/\sqrt{s}} = c\sqrt{d/s} + \eta \wedge\sqrt{\eta}\leq 2(\eta\wedge\sqrt{\eta}).
\end{equation*}
Plugging $\delta = 2(\eta\wedge\sqrt{\eta})$ into \eqref{nequ:4}, we obtain for a small constant $c'>0$ that
\begin{equation*}
P\rbr{\|\Hb(\xb)^{-1/2}\hHb(\xb) \Hb(\xb)^{-1/2}-\Ib\|\geq 4\eta}\leq 2\exp(-s\cdot (\eta\wedge \eta^2)/c^2) \leq 2\exp\Big(-\frac{c'\eta}{\sqrt{1/s}+1/s}\Big).
\end{equation*}
This further implies that, for a small constant $c''>0$, 
\begin{equation*}
\mP(\|\Hb(\xb)^{-1/2}\Eb(\xb) \Hb(\xb)^{-1/2}\|\geq
\eta)\leq 2\exp(-\frac{c''\eta}{\sqrt{d/s}+d/s}),\quad \forall \eta>0.
\end{equation*}
Thus, $\|\Hb(\xb)^{-1/2}\Eb(\xb) \Hb(\xb)^{-1/2}\|$ is sub-exponential with constant $K=O(\sqrt{d/s}+d/s)$. The claim follows by noting that $\|\Eb(\xb)\|\leq \lambda_{\max}\cdot \|\Hb(\xb)^{-1/2}\Eb(\xb) \Hb(\xb)^{-1/2}\|$.
\end{proof}

\section{Preliminary Lemmas}

\begin{lemma}\label{lem:pre}
Let $d\geq 1$ and $\delta\in(0,1)$. If $d/\delta\geq e$, then for any $0<a\leq 1$,
\begin{equation*}
t\geq \frac {2\log(d/(\delta a))}{a}\quad\Longrightarrow\quad\frac{\log(dt/\delta)}{t} \leq a,
\end{equation*}
and moreover, the right hand side inequality fails if $t= \log(d/(\delta a))/a$ and $d/\delta >e$.
\end{lemma}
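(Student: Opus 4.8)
The plan is to reduce the claim to an elementary one-variable monotonicity argument. Write $c \coloneqq d/\delta \geq e$ and observe that $\log(dt/\delta) = \log c + \log t$, so the target inequality $\log(dt/\delta)/t \leq a$ is equivalent to $g(t) \coloneqq at - \log c - \log t \geq 0$. Setting $t_0 \coloneqq 2\log(c/a)/a$, the forward implication amounts to showing $g(t) \geq 0$ for all $t \geq t_0$.

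First I would show that $g$ is nondecreasing on $[t_0,\infty)$, so that it suffices to verify $g(t_0)\geq 0$. Since $g'(t) = a - 1/t$, we have $g'(t)\geq 0$ once $t \geq 1/a$; and because $a \leq 1$ and $c \geq e$ give $c/a \geq e$, hence $\log(c/a)\geq 1$, we get $t_0 \geq 2/a > 1/a$. Then I would evaluate the boundary value: a direct simplification gives $g(t_0) = \log(c/a) - \log 2 - \log\log(c/a)$. Substituting $u \coloneqq \log(c/a) \geq 1$, this is $u - \log u - \log 2$, and since $u \mapsto u - \log u$ is increasing on $[1,\infty)$ with value $1$ at $u=1$, we conclude $g(t_0) \geq 1 - \log 2 > 0$. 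This closes the forward implication. The only care required is the bookkeeping of the logarithmic identities; there is no genuine obstacle beyond that.

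For the sharpness claim I would substitute $t_1 \coloneqq \log(c/a)/a$ directly into $\log(dt_1/\delta) - at_1$. Using $at_1 = \log(c/a)$ and $\log(dt_1/\delta) = \log c + \log t_1$, the expression telescopes to $\log c + \log t_1 - \log(c/a) = \log(at_1) = \log\log(c/a)$. Under the strict hypothesis $d/\delta > e$ together with $a \leq 1$ we have $c/a > e$, hence $\log(c/a) > 1$ and $\log\log(c/a) > 0$; equivalently $\log(dt_1/\delta)/t_1 > a$, so the right-hand inequality fails, as claimed. The main (mild) subtlety is that strictness of $d/\delta > e$ is exactly what forces $\log\log(c/a)$ strictly above zero, which is why the sharpness statement requires a strict hypothesis while the forward implication needs only $d/\delta \geq e$.
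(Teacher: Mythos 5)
Your proof is correct and takes essentially the same route as the paper's: both verify the inequality at the endpoint $t_0 = 2\log(d/(\delta a))/a$ via an elementary scalar fact (the paper uses $x\geq 2\log x$, you use $u-\log u\geq 1$ for $u\geq 1$, which give the identical boundary value $\log(c/a)-\log 2-\log\log(c/a)$), then extend to all $t\geq t_0$ by monotonicity (the paper shows $\log(dt/\delta)/t$ is decreasing for $t\geq 1$; you show $at-\log(dt/\delta)$ is increasing for $t\geq 1/a$ --- the same argument in different clothing). The sharpness part is also the paper's computation verbatim: substituting $t_1=\log(d/(\delta a))/a$ leaves a strictly positive deficit $\log\log(d/(\delta a))>0$ when $d/\delta>e$.
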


\begin{proof}
Let $t = 2\log(d/(\delta a))/a$. We will show for this $t$ that $t\geq \log(dt/\delta)/a$. We have
\begin{align*}
t - \frac{\log(dt/\delta)}{a}
& =  \frac{2\log(d/(\delta a))}{a} - \frac{\log(d/(\delta a)) + \log(2\log(d/(\delta a)))}{a}
\\
&=\frac{\log(d/(\delta a)) - \log(2\log(d/(\delta a)))}{a}\\
&= \frac1a\log\Big(\frac{x}{2\log(x)}\Big)\geq 0,\qquad\text{for}\quad x=\frac{d}{\delta a},
\end{align*}
where the last step is due to the fact that $x\geq 2\log(x)$ for all $x>0$. Now, we consider the function $f(t) = \log(dt/\delta)/t$. For any $t\geq1$, its derivative is bounded as
\begin{align*}
\frac{\partial f(t)}{\partial t} = \frac{1-\log(dt/\delta)}{t^2}\leq 0,
\end{align*}
which implies that for $t\geq2\log(d/(\delta a))/a \geq 1$ we get $\log(dt/\delta)/t =f(t)\leq f(2\log(d/(\delta a))/a)\leq a$. Finally, if $t=\log(d/(\delta a))/a$ and $d/\delta >e$, then $\log(dt/\delta)/a=\log(d/(\delta a))/a + \log(\log(d/(\delta a)))/a >t$, which completes the proof.
\end{proof}

\begin{lemma}\label{lem:pre:1}
Define the neighborhood $\bar{\N}_{\nu} = \{\xb: f(\xb) \leq f(\ttxb) + \nu\lambda_{\min}^3/L^2\}$ for $\nu>0$. Suppose Assumption \ref{ass:4} holds, then the following relation holds
\begin{equation*}
\bar{\N}_{\nu^2/3} \subseteq \N_{\nu} \subseteq \bar{\N}_{\nu^2}, \quad \text{for any } \nu \in (0, 1]
\end{equation*}
where $\N_{\nu}$ is defined in \eqref{Nnu}.	
\end{lemma}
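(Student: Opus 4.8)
The plan is to base both inclusions on a single second-order Taylor estimate comparing $f(\xb)-f(\ttxb)$ with the weighted quadratic $\tfrac12\|\xb-\ttxb\|_{\ttHb}^2$, with an error controlled by the Lipschitz constant $L$ from Assumption~\ref{ass:4}. Writing $r=\|\xb-\ttxb\|_{\ttHb}$ and using $\nabla f(\ttxb)=\0$, the integral form of Taylor's theorem gives
\begin{equation*}
f(\xb)-f(\ttxb)=\int_0^1(1-s)(\xb-\ttxb)^\top\Hb(\ttxb+s(\xb-\ttxb))(\xb-\ttxb)\,ds.
\end{equation*}
Splitting $\Hb(\ttxb+s(\xb-\ttxb))=\ttHb+[\Hb(\ttxb+s(\xb-\ttxb))-\ttHb]$, the $\ttHb$ part integrates to $\tfrac12 r^2$, while Assumption~\ref{ass:4} bounds the remainder by $L\|\xb-\ttxb\|^3\int_0^1 s(1-s)\,ds=\tfrac{L}{6}\|\xb-\ttxb\|^3$. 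Converting the Euclidean norm via $\|\xb-\ttxb\|\le r/\sqrt{\lambda_{\min}}$ (from $\lambda_{\min}\Ib\preceq\ttHb$) yields the master estimate
\begin{equation*}
\Big|\,f(\xb)-f(\ttxb)-\tfrac12 r^2\,\Big|\le \frac{L}{6\lambda_{\min}^{3/2}}\,r^3 .
\end{equation*}
The key observation is that on the boundary of $\N_\nu$, where $r=\nu\lambda_{\min}^{3/2}/L$, the cubic error equals exactly $\tfrac{\nu}{6}r^2$, so for any $\xb\in\N_\nu$ the error is at most $\tfrac{\nu}{6}r^2$.

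For $\N_\nu\subseteq\bar{\N}_{\nu^2}$ I would take $\xb\in\N_\nu$ and apply the upper half of the master estimate: $f(\xb)-f(\ttxb)\le(\tfrac12+\tfrac{\nu}{6})r^2\le\tfrac23 r^2$, using $\nu\le1$. Since $r^2\le\nu^2\lambda_{\min}^3/L^2$ on $\N_\nu$, this gives $f(\xb)-f(\ttxb)\le\tfrac23\nu^2\lambda_{\min}^3/L^2\le\nu^2\lambda_{\min}^3/L^2$, i.e.\ membership in $\bar{\N}_{\nu^2}$.

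The reverse inclusion $\bar{\N}_{\nu^2/3}\subseteq\N_\nu$ is the delicate step, and is where I expect the main obstacle: the master estimate only controls $f-f^\star$ by $r$ when $r$ is small, so one cannot directly invert it to bound $r$ by $f-f^\star$ at an arbitrary point (the cubic term overwhelms the quadratic far from $\ttxb$, and a naive strong-convexity bound loses a spurious factor of $\kappa$). The fix is a monotonicity-along-rays argument. The lower half of the master estimate, evaluated on the boundary of $\N_\nu$, gives $f(\xb)-f(\ttxb)\ge(\tfrac12-\tfrac{\nu}{6})r^2\ge\tfrac13 r^2=\tfrac{\nu^2}{3}\lambda_{\min}^3/L^2$ for every $\xb$ with $r=\nu\lambda_{\min}^{3/2}/L$. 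Since $f$ is strongly convex with $\nabla f(\ttxb)=\0$, its restriction to any segment $\ttxb+\lambda(\xb-\ttxb)$, $\lambda\in[0,1]$, is convex with zero derivative at $\lambda=0$ and hence strictly increasing; moreover $\|\xb(\lambda)-\ttxb\|_{\ttHb}=\lambda r$ scales linearly, so a point $\xb\notin\N_\nu$ crosses $\partial\N_\nu$ at some $\lambda^\star<1$. Monotonicity then forces $f(\xb)-f(\ttxb)>f(\xb(\lambda^\star))-f(\ttxb)\ge\tfrac{\nu^2}{3}\lambda_{\min}^3/L^2$, so $\xb\notin\bar{\N}_{\nu^2/3}$. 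The contrapositive is exactly the desired inclusion, and the whole argument needs the Taylor bound only on $\partial\N_\nu$, where $r$ is controlled.
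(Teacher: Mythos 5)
Your proof is correct, and for the harder inclusion it takes a genuinely different route from the paper. The first inclusion $\N_\nu\subseteq\bar{\N}_{\nu^2}$ is handled the same way in both proofs (Taylor estimate plus $\|\xb-\ttxb\|\le\|\xb-\ttxb\|_{\ttHb}/\sqrt{\lambda_{\min}}$, giving the factor $\tfrac12+\tfrac{\nu}{6}\le 1$). For $\bar{\N}_{\nu^2/3}\subseteq\N_\nu$, the paper argues directly at the point $\xb$: from $f(\xb)-f(\ttxb)\le\nu^2\lambda_{\min}^3/(3L^2)$, strong convexity first yields the \emph{Euclidean} bound $\|\xb-\ttxb\|\le\sqrt{2/3}\,\nu\lambda_{\min}/L$, and substituting this into the cubic error term makes that term at most $\tfrac{\sqrt2\,\nu}{6\sqrt3}\|\xb-\ttxb\|_{\ttHb}^2$, a fraction of the quadratic term, after which one solves the resulting inequality for $\|\xb-\ttxb\|_{\ttHb}^2$. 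Note this refutes your stated worry that a strong-convexity route "loses a spurious factor of $\kappa$": that loss only occurs in the truly naive conversion $\|\cdot\|_{\ttHb}^2\le\lambda_{\max}\|\cdot\|^2$, whereas the paper applies strong convexity only inside the cubic remainder, where no $\kappa$ appears. Your alternative --- establish the lower bound $f-f(\ttxb)\ge\tfrac13 r^2=\nu^2\lambda_{\min}^3/(3L^2)$ only on the sphere $\partial\N_\nu$ (where $r$ is known exactly), then lift it to all of $\mR^d\setminus\N_\nu$ by strict radial monotonicity of $f$ along rays from $\ttxb$ (convexity plus $\nabla f(\ttxb)=\0$, with strictness from strong convexity, which you correctly need to exclude boundary ties) --- is a clean geometric argument whose virtue is that the Taylor expansion is never invoked where the cubic term could dominate; it would generalize to any comparison of a star-shaped level set with sublevel sets. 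The paper's bootstrapping is shorter and purely computational; your ray argument is more modular and arguably more transparent about \emph{why} the inclusion holds.
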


\begin{proof}
By Assumption \ref{ass:4}, we have
\begin{equation}\label{pequ:A3}
\abr{f(\xb) - f(\ttxb) - \frac{1}{2}\nbr{\xb - \ttxb}_{\ttHb}^2} \leq \frac{L}{6}\nbr{\xb - \ttxb}^3.
\end{equation}
Suppose $\xb \in \bar{\N}_{\nu^2/3}$ for $\nu\in(0, 1]$, we have
\begin{equation}\label{pequ:A4}
f(\ttxb) + \frac{\lambda_{\min}}{2}\|\xb - \ttxb\|^2\leq f(\xb) \leq f(\ttxb) + \frac{\nu^2\lambda_{\min}^3}{3L^2} \Longrightarrow \|\xb - \ttxb\| \leq \frac{\sqrt{2}\nu\lambda_{\min}}{\sqrt{3}L}.
\end{equation}
Combining \eqref{pequ:A3} and \eqref{pequ:A4}, we know that $\xb \in \bar{\N}_{\nu^2/3}$ for $\nu\in(0, 1]$ leads to
\begin{align*}
f(\ttxb) + \frac{\nu^2\lambda_{\min}^3}{3L^2} \geq f(\xb) & \stackrel{\mathclap{\eqref{pequ:A3}}}{\geq} f(\ttxb) + \frac{1}{2}\|\xb - \ttxb\|_{\ttHb}^2 - \frac{L}{6}\|\xb-\ttxb\|^3	\\
& \geq f(\ttxb) + \frac{1}{2}\nbr{\xb - \ttxb}_{\ttHb}^2 - \frac{L}{6}\nbr{\xb - \ttxb}\cdot \frac{\nbr{\xb - \ttxb}_{\ttHb}^2}{\lambda_{\min}}\\
& = f(\ttxb) + \rbr{\frac{1}{2}-\frac{L\|\xb - \ttxb\|}{6\lambda_{\min}}}\|\xb - \ttxb\|_{\ttHb}^2\\
& \stackrel{\mathclap{\eqref{pequ:A4}}}{\geq} f(\ttxb) + \rbr{\frac{1}{2} - \frac{\sqrt{2}\nu}{6\sqrt{3}}}\|\xb - \ttxb\|_{\ttHb}^2 \geq f(\ttxb) + \frac{\|\xb - \ttxb\|_{\ttHb}^2}{3},
\end{align*}
which further implies $\|\xb - \ttxb\|_{\ttHb} \leq \nu\lambda_{\min}^{3/2}/L$ and $\xb\in \N_{\nu}$. On the other hand, suppose $\xb \in \N_{\nu}$, we have
\begin{align*}
f(\xb)  &\stackrel{\mathclap{\eqref{pequ:A3}}}{\leq} f(\ttxb) + \frac{1}{2}\nbr{\xb - \ttxb}_{\ttHb}^2 + \frac{L}{6}\nbr{\xb - \ttxb}^3 \leq f(\ttxb) + \rbr{ \frac{1}{2} + \frac{L\nbr{\xb - \ttxb}_{\ttHb}}{6\lambda_{\min}^{3/2}}} \nbr{\xb - \ttxb}_{\ttHb}^2\\
& \leq f(\ttxb) + \rbr{ \frac{1}{2} + \frac{\nu}{6}} \nbr{\xb - \ttxb}_{\ttHb}^2\leq f(\ttxb)  + \nbr{\xb - \ttxb}_{\ttHb}^2 \leq f(\ttxb) + \frac{\nu^2\lambda_{\min}^3}{L^2}.
\end{align*}
Thus, we have $\xb \in \bar{\N}_{\nu^2}$. This completes the proof.
\end{proof}

\begin{lemma}\label{lem:pre:2}
	Suppose Assumption \ref{ass:4} holds, then we have
	\begin{equation*}
		\|\Hb(\xb_1)^{-1/2}\Hb(\xb_2)\Hb(\xb_1)^{-1/2} - \Ib\| \leq \frac{L}{\lambda_{\min}^{3/2}}\|\xb_1 - \xb_2\|_{\ttHb}, \quad \forall \xb_1, \xb_2\in \mR^d.
	\end{equation*}
\end{lemma}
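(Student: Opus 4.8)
The plan is to rewrite the target quantity as a conjugation of the Hessian difference and then peel off the bounds one factor at a time. First I would use the algebraic identity
\begin{equation*}
\Hb(\xb_1)^{-1/2}\Hb(\xb_2)\Hb(\xb_1)^{-1/2} - \Ib = \Hb(\xb_1)^{-1/2}\big(\Hb(\xb_2)-\Hb(\xb_1)\big)\Hb(\xb_1)^{-1/2},
\end{equation*}
which is valid because $\Hb(\xb_1)^{-1/2}\Hb(\xb_1)\Hb(\xb_1)^{-1/2}=\Ib$, and because $\Hb(\xb_1)\succeq\lambda_{\min}\Ib\succ\0$ guarantees that the symmetric square root and its inverse are well defined.

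Next, by submultiplicativity of the spectral norm I would bound the right-hand side by $\|\Hb(\xb_1)^{-1/2}\|^2\cdot\|\Hb(\xb_2)-\Hb(\xb_1)\|$. Here the first factor equals $\|\Hb(\xb_1)^{-1}\|\leq 1/\lambda_{\min}$, again using $\Hb(\xb_1)\succeq\lambda_{\min}\Ib$, while the second factor is at most $L\|\xb_1-\xb_2\|$ by the Lipschitz condition of Assumption \ref{ass:4}. Combining these yields the intermediate Euclidean-norm bound $\tfrac{L}{\lambda_{\min}}\|\xb_1-\xb_2\|$.

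Finally, I would convert the Euclidean norm into the $\ttHb$-norm. Since $\ttHb=\Hb(\ttxb)\succeq\lambda_{\min}\Ib$, we have $\|\xb_1-\xb_2\|_{\ttHb}^2\geq\lambda_{\min}\|\xb_1-\xb_2\|^2$, hence $\|\xb_1-\xb_2\|\leq\lambda_{\min}^{-1/2}\|\xb_1-\xb_2\|_{\ttHb}$. Substituting this into the intermediate bound produces the claimed factor $\tfrac{L}{\lambda_{\min}^{3/2}}$. There is no genuine obstacle here: the argument is just a short chain of the conjugation identity, submultiplicativity, and the two-sided eigenvalue bounds on the Hessian. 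The only point requiring a little care is verifying the conjugation identity and confirming that all matrices involved are simultaneously symmetric positive definite, so that every square root and inverse appearing in the computation is unambiguously defined.
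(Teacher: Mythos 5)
Your proposal is correct and follows essentially the same route as the paper's proof: the paper compresses your conjugation identity and submultiplicativity step into the single inequality $\|\Hb(\xb_1)^{-1/2}\Hb(\xb_2)\Hb(\xb_1)^{-1/2} - \Ib\| \leq \frac{1}{\lambda_{\min}}\|\Hb(\xb_1) - \Hb(\xb_2)\|$, then applies the Lipschitz bound and the conversion $\|\xb_1-\xb_2\|\leq \lambda_{\min}^{-1/2}\|\xb_1-\xb_2\|_{\ttHb}$ exactly as you do.
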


\begin{proof}
We note that
\begin{equation*}
\|\Hb(\xb_1)^{-1/2}\Hb(\xb_2)\Hb(\xb_1)^{-1/2} - \Ib\|  \leq \frac{1}{\lambda_{\min}}\|\Hb(\xb_1) - \Hb(\xb_2)\| \leq \frac{L}{\lambda_{\min}}\|\xb_1 - \xb_2\| \leq \frac{L}{\lambda_{\min}^{3/2}}\|\xb_1 - \xb_2\|_{\ttHb}.
\end{equation*}
This completes the proof.
\end{proof}

\section{Proofs}

\subsection{Proof of Theorem \ref{thm:2}}\label{pf:thm:2}

For any $t\geq 0$ and scalar $\theta>0$, we have from Assumption \ref{ass:3} that
\begin{align*}
\mE_i\sbr{\exp\rbr{\theta z_{i,t}\Eb_i}}  & = \Ib + \theta z_{i,t}\mE_i[\Eb_i] + \sum_{j=2}^{\infty} \frac{(\theta z_{i,t})^j\mE_i[\Eb_i^j]}{j!}
\preceq \Ib + \sum_{j=2}^{\infty}(\theta z_{i,t}\Upsilon_E)^{j-2}\cdot \frac{(\theta z_{i,t}\Upsilon_E)^2}{2}\cdot\Ib \\
& = \Ib + \frac{(\theta z_{i,t}\Upsilon_E)^2}{2(1 - \theta z_{i,t} \Upsilon_E)}\cdot \Ib \preceq\exp\rbr{ \frac{(\theta z_{i,t}\Upsilon_E)^2}{2(1 - \theta z_{i,t}\Upsilon_E)} \cdot \Ib}, \quad \forall i = 0,1,\ldots,t,
\end{align*}
where the second equality holds for $\cbr{\theta: \theta z_{i,t}\Upsilon_E < 1}$. Therefore, we let $\Theta_t \coloneqq\{\theta: \theta < 1/(z_t^{(\max)}\Upsilon_E )\}$, and have for all $i = 0,1,\ldots, t$,
\begin{equation*}
\mE_i\sbr{\exp\rbr{\theta z_{i,t}\Eb_i}}  \preceq \exp\rbr{\frac{(\theta z_{i,t}\Upsilon_E)^2}{2(1 - \theta z_t^{(\max)} \Upsilon_E) } \cdot \Ib}, \quad \forall\theta\in \Theta_t.
\end{equation*}
The above inequality suggests that the condition \eqref{equ:exp:cond} in Lemma \ref{lem:2} holds with
\begin{equation*}
g_t(\theta) = \frac{\theta^2}{2(1 - \theta z_t^{(\max)} \Upsilon_E) }\quad \text{ and }\quad \Ub_i = z_{i,t}^2\Upsilon_E^2\cdot \Ib.
\end{equation*}
Let $\sigma^2 = \|\sum_{i=0}^{t}\Ub_i\| = \Upsilon_E^2\sum_{i=0}^{t}z_{i,t}^2$ in Lemma \ref{lem:2}. Then we have for any $\eta\geq 0$,
\begin{equation*}
\mP\rbr{\nbr{\barEb_t} \geq \eta} \leq 2d\cdot \inf_{\theta \in \Theta_t}\exp\rbr{-\theta\eta + g_t(\theta)\sigma^2}.
\end{equation*}
Plugging $\theta = \eta/(z_t^{(\max)} \Upsilon_E \eta + \sigma^2) \in \Theta_t$ into the above right hand side, we obtain
\begin{equation*}
\mP\rbr{\|\barEb_t\| \geq \eta} \leq 2d\cdot\exp\rbr{-\frac{\eta^2/2}{ \sigma^2 + z_t^{(\max)} \Upsilon_E \eta}  }.
\end{equation*}
This completes the proof.

\subsection{Proof of Lemma \ref{lem:3}}\label{pf:lem:3}

Let us define the event
\begin{equation*}
\bar{\EE} = \bigcap_{t=0}^{\infty}\cbr{\nbr{\barEb_t} \leq 8\Upsilon_E\rbr{\sqrt{\frac{\log(d(t+1)/\delta)}{t+1}}\vee \frac{\log(d(t+1)/\delta)}{t+1} } }.
\end{equation*}
By Theorem \ref{thm:2} and \eqref{equ:equal:con}, we know $\mP(\bar{\EE}) \geq 1-\delta\pi^2/6$. We also note that
\begin{align}\label{nequ:1}
\|\barEb_t\|\leq \epsilon\lambda_{\min} & \Longleftarrow 8\Upsilon_E\rbr{\sqrt{\frac{\log(d(t+1)/\delta)}{t+1}}\vee \frac{\log(d(t+1)/\delta)}{t+1} } \leq \epsilon\lambda_{\min} \nonumber\\
& \Longleftarrow \frac{\log(d(t+1)/\delta)}{t+1} \leq \frac{\epsilon^2}{64\Upsilon^2} \wedge1 = \rbr{\frac{\epsilon}{8\Upsilon}\wedge 1}^2 \nonumber\\
&\Longleftarrow t \geq 4\rbr{\frac{8\Upsilon}{\epsilon}\vee 1}^2 \log\cbr{\frac{d}{\delta}\cdot\rbr{\frac{8\Upsilon}{\epsilon}\vee 1}}=\T_1. \quad (\text{Lemma \ref{lem:pre}})
\end{align}
Therefore, with probability at least $1-\delta\pi^2/6$, we have for any $t\geq \T_1$ that $\|\barEb_t\|\leq \epsilon\lambda_{\min}$. This shows that the event $\EE$ defined in \eqref{event:EE} happens, and
\begin{equation*}
(1-\epsilon)\lambda_{\min}\cdot\Ib\preceq\tHb_t \stackrel{\eqref{equ:simple}}{=} \frac{1}{t+1}\sum_{i=0}^{t}\Hb_i + \barEb_t \preceq (\lambda_{\max} + \epsilon\lambda_{\min})\cdot\Ib\preceq (1+\epsilon)\lambda_{\max} \cdot\Ib.
\end{equation*}
This completes the proof.

\subsection{Proof of Lemma \ref{lem:4}}\label{pf:lem:4}

By Lemma \ref{lem:3}, we know that $\pb_t = -(\tHb_t)^{-1}\nabla f_t$ for $t\geq \T_1$. We apply Taylor's expansion:
\begin{align*}
	f(\xb_t - \mu_t(\tHb_t)^{-1}\nabla f_t ) & \leq f(\xb_t) - \mu_t \nabla f_t^\top(\tHb_t)^{-1}\nabla f_t + \frac{\lambda_{\max}\mu_t^2}{2}\nabla f_t^\top(\tHb_t)^{-2}\nabla f_t \\
	& \leq f(\xb_t) - \mu_t \nabla f_t^\top(\tHb_t)^{-1}\nabla f_t +\frac{\kappa\mu_t^2}{2(1-\epsilon)}\nabla f_t^\top(\tHb_t)^{-1}\nabla f_t \quad (\text{Lemma \ref{lem:3}})\\
	& =  f(\xb_t) - \mu_t\rbr{1 - \frac{\kappa\mu_t}{2(1-\epsilon)}}\nabla f_t^\top(\tHb_t)^{-1}\nabla f_t.
\end{align*}
Thus, the Armijo condition is satisfied if
\begin{equation*}
1 - \frac{\kappa\mu_t}{2(1-\epsilon)} \geq \beta \Longleftrightarrow \mu_t \leq \frac{2(1-\beta)(1-\epsilon)}{\kappa}.
\end{equation*}
Therefore, the backtracking line search on Line 7 leads to a stepsize satisfying
\begin{equation}\label{npequ:1}
\mu_t\geq \frac{2\rho(1-\beta)(1-\epsilon)}{\kappa}.
\end{equation}
Moreover, we apply Lemma \ref{lem:3} and strong convexity of $f$, and have
\begin{equation}\label{npequ:2}
\nabla f_t^\top(\tHb_t)^{-1}\nabla f_t \geq \frac{1}{(1+\epsilon)\lambda_{\max}}\|\nabla f_t\|^2 \geq \frac{2}{(1+\epsilon)\kappa}(f(\xb_t) - f(\ttxb)).
\end{equation}
Combining \eqref{npequ:1}, \eqref{npequ:2} with the Armijo condition, we have
\begin{align}\label{pequ:B1}
f(\xb_{t+1}) - f(\ttxb) &\leq f(\xb_t) - f(\ttxb) - \mu_t \beta \nabla f_t^\top(\tHb_t)^{-1}\nabla f_t \nonumber\\
& \leq f(\xb_t) - f(\ttxb) - \frac{2\rho\beta(1-\beta)(1-\epsilon)}{\kappa}\cdot \frac{2}{(1+\epsilon)\kappa}(f(\xb_t) - f(\ttxb)) \nonumber\\
& = (1-\phi)(f(\xb_t) - f(\ttxb)),
\end{align}
which shows the first part of the statement. Furthermore, we apply \eqref{pequ:B1} recursively, apply strong convexity, and have
\begin{equation*}
\|\xb_t - \ttxb\|^2\leq \frac{2(f(\xb_t)-f(\ttxb))}{\lambda_{\min}} \\
\stackrel{\eqref{pequ:B1}}{\leq} \frac{2(1-\phi)^{t-\T_1}}{\lambda_{\min}}(f(\xb_{\T_1}) - f(\ttxb)) \leq \frac{2(1-\phi)^{t-\T_1}}{\lambda_{\min}}(f(\xb_0) - f(\ttxb)).
\end{equation*}
The last statement follows from $\|\xb_t-\ttxb\|_{\ttHb}^2\leq \lambda_{\max}\|\xb_t - \ttxb\|^2$. This completes the proof.

\subsection{Proof of Corollary \ref{cor:1}}\label{pf:cor:1}

We condition on the event \eqref{event:EE}, which happens with probability $1-\delta\pi^2/6$. By Lemma \ref{lem:4}, we know that, for $t\geq \T_1$,
\begin{equation*}
f(\xb_t) - f(\ttxb) \leq (1 - \phi)^{t-\T_1}(f(\xb_{\T_1}) - f(\ttxb)) \leq (1 - \phi)^{t-\T_1}(f(\xb_0) - f(\ttxb)).
\end{equation*}
By Lemma \ref{lem:pre:1}, to have $\xb_t \in \N_\nu$, it suffices to have $\xb_t \in \bar{\N}_{\nu^2/3}$. Thus, we let
\begin{equation}\label{equ:new}
(1 - \phi)^{t-\T_1}(f(\xb_0) - f(\ttxb)) \leq \frac{\nu^2\lambda_{\min}^3}{3L^2} \Longleftarrow t-\T_1 \geq \frac{\log\rbr{\frac{3L^2(f(\xb_0) - f(\ttxb))}{\nu^2\lambda_{\min}^3}}}{\log\rbr{\frac{1}{1-\phi}}}\Longleftarrow t \geq \T_1 + \T_2,
\end{equation}
where $\T_2$ is defined in \eqref{T2} and the implication uses the fact that $\log(1/(1-\phi))\geq \phi$. Furthermore, since $f(\xb_t)$ is always decreasing, we know after $\T_1+T_2$ iterations $\xb_t$ stays in $\bar{\N}_{\nu^2/3}$, and hence stays in $\N_{\nu}$ (cf. Lemma \ref{lem:pre:1}). This completes the proof.

\subsection{Proof of Lemma \ref{lem:5}}\label{pf:lem:5}

It suffices to show 
\begin{equation*}
f(\xb_t - (\tHb_t)^{-1}\nabla f_t) \leq f(\xb_t) - \beta\nabla f_t^\top(\tHb_t)^{-1}\nabla f_t.
\end{equation*}
By Assumption \ref{ass:4}, we have
\begin{align}\label{npequ:3}
& f(\xb_t - (\tHb_t)^{-1}\nabla f_t) \nonumber\\
& \leq f(\xb_t) - \nabla f_t^\top(\tHb_t)^{-1}\nabla f_t + \frac{1}{2}\nabla f_t^\top (\tHb_t)^{-1}\Hb_t(\tHb_t)^{-1}\nabla f_t + \frac{L}{6}\|(\tHb_t)^{-1}\nabla f_t\|^3 \nonumber\\
& \leq f(\xb_t) - \frac{1}{2}\nabla f_t^\top(\tHb_t)^{-1}\nabla f_t + \frac{1}{2}\nabla f_t^\top(\tHb_t)^{-1}(\Hb_t - \tHb_t)(\tHb_t)^{-1}\nabla f_t + \frac{L}{6}\|(\tHb_t)^{-1}\nabla f_t\|^3 \nonumber\\
& \leq f(\xb_t) - \frac{1}{2}\nabla f_t^\top(\tHb_t)^{-1}\nabla f_t + \frac{1}{2}\|(\tHb_t)^{-1/2}(\Hb_t - \tHb_t)(\tHb_t)^{-1/2}\|\nabla f_t^\top(\tHb_t)^{-1}\nabla f_t \nonumber\\
& \quad + \frac{L}{6}\|(\tHb_t)^{-1}\nabla f_t\| \nabla f_t^\top(\tHb_t)^{-2}\nabla f_t \nonumber\\
& \leq f(\xb_t) - \frac{1}{2}\nabla f_t^\top (\tHb_t)^{-1}\nabla f_t + \frac{\psi}{2(1-\psi)}\nabla f_t^\top (\tHb_t)^{-1}\nabla f_t + \frac{L\|(\tHb_t)^{-1}\nabla f_t\|}{6(1-\psi)\lambda_{\min}}\nabla f_t^\top(\tHb_t)^{-1}\nabla f_t \nonumber\\
& = f(\xb_t) - \rbr{\frac{1}{2} - \frac{\psi}{2(1-\psi)} - \frac{L\|(\tHb_t)^{-1}\nabla f_t\|}{6(1-\psi)\lambda_{\min}}}\nabla f_t^\top (\tHb_t)^{-1}\nabla f_t.
\end{align}
For term $\|(\tHb_t)^{-1}\nabla f_t\|$, we let $\Hb_t^\eta = \Hb(\xb_t^\eta)$ with $\xb_t^\eta = \ttxb + \eta(\xb_t - \ttxb)$ for some $\eta\in(0, 1)$, and have
\begin{align}\label{npequ:4}
&\|(\tHb_t)^{-1}\nabla f_t\|^2  = (\xb_t - \ttxb)^\top (\ttHb)^{1/2}(\ttHb)^{-1/2}\Hb_t^\eta(\tHb_t)^{-2}\Hb_t^\eta(\ttHb)^{-1/2}(\ttHb)^{1/2}(\xb_t-\ttxb) \nonumber\\
& \leq \frac{\|\xb_t - \ttxb\|_{\ttHb}^2}{(1-\psi)\lambda_{\min}}\|(\ttHb)^{-1/2}\Hb_t^\eta(\tHb_t)^{-1}\Hb_t^\eta(\ttHb)^{-1/2}\| \nonumber\\
& \leq \frac{\nu^2\lambda_{\min}^2}{(1-\psi)L^2}\|(\ttHb)^{-1/2}(\Hb_t^\eta)^{1/2}\|^2\|(\Hb_t^\eta)^{1/2}(\tHb_t)^{-1}(\Hb_t^\eta)^{1/2}\| \quad (\text{since } \xb_t\in \N_{\nu}) \nonumber\\
& \leq \frac{\nu^2\lambda_{\min}^2}{(1-\psi)L^2}\|(\ttHb)^{-1/2}\Hb_t^\eta(\ttHb)^{-1/2}\|\|(\Hb_t^\eta)^{1/2}(\Hb_t)^{-1}(\Hb_t^\eta)^{1/2}\|\|\Hb_t^{1/2}(\tHb_t)^{-1}\Hb_t^{1/2}\| \nonumber\\
& \leq \frac{\nu^2\lambda_{\min}^2}{(1-\psi)^2L^2}\|(\ttHb)^{-1/2}\Hb_t^\eta(\ttHb)^{-1/2}\| \|(\Hb_t)^{-1/2}\Hb_t^\eta(\Hb_t)^{-1/2}\|.
\end{align}
Noting that $\xb_t^\eta\in\N_{\nu}$ if $\xb_t \in \N_\nu$, we apply Lemma \ref{lem:pre:2} and have
\begin{equation}\label{npequ:5}
\|(\ttHb)^{-1/2}\Hb_t^\eta(\ttHb)^{-1/2}\| \vee \|(\Hb_t)^{-1/2}\Hb_t^\eta(\Hb_t)^{-1/2}\| \leq 1 + \nu\leq 2.
\end{equation}
Combining \eqref{npequ:3}, \eqref{npequ:4}, and \eqref{npequ:5}, we finally obtain
\begin{equation*}
f(\xb_t - (\tHb_t)^{-1}\nabla f_t) \leq f(\xb_t) - \rbr{\frac{1}{2} - \frac{\psi}{2(1-\psi)} - \frac{\nu}{3(1-\psi)^2}}\nabla f_t^\top(\tHb_t)^{-1}\nabla f_t.
\end{equation*}
Thus, it suffices to let
\begin{equation*}
\frac{1}{2} - \frac{\psi}{2(1-\psi)} - \frac{\nu}{3(1-\psi)^2} \geq \beta \Longleftarrow \frac{\psi}{2(1-\psi)} \vee \frac{\nu}{3(1-\psi)^2} \leq \frac{1}{2}\rbr{\frac{1}{2}-\beta}
\end{equation*}
as implied by the conditions stated in the lemma. This completes the proof.

\subsection{Proof of Lemma \ref{lem:6}}\label{pf:lem:6}

Since $\xb_t\in \N_{\nu}$ with $\nu \leq 2/3\cdot (0.5-\beta)$, we apply Lemma \ref{lem:pre:2} and have
\begin{equation*}
\|(\ttHb)^{-1/2}(\Hb_t - \ttHb)(\ttHb)^{-1/2}\| \leq \frac{L}{\lambda_{\min}^{3/2}}\|\xb_t - \ttxb\|_{\ttHb} \leq \nu\leq 0.5-\beta.
\end{equation*}
Thus, we obtain
\begin{equation}\label{pequ:B2}
\rbr{0.5+\beta}\ttHb\preceq \Hb_t\preceq \rbr{1.5-\beta}\ttHb. 
\end{equation}
Since $\pb_t = -(\tHb_t)^{-1}\nabla f_t$ and $\mu_t=1$, we have
\begin{align}\label{npequ:6}
& \nbr{\xb_{t+1} - \ttxb}_{\ttHb} = \|\xb_t - (\tHb_t)^{-1}\nabla f_t - \ttxb\|_{\ttHb} \nonumber\\
& \stackrel{\mathclap{\eqref{pequ:B2}}}{\leq}\frac{1}{\sqrt{0.5+\beta}}\|\xb_t - (\tHb_t)^{-1}\nabla f_t - \ttxb\|_{\Hb_t} \nonumber\\
& \leq \frac{1}{\sqrt{0.5+\beta}}\cbr{\|\xb_t - \Hb_t^{-1}\nabla f_t - \ttxb\|_{\Hb_t} + \|\Hb_t^{-1}\nabla f_t - (\tHb_t)^{-1}\nabla f_t\|_{\Hb_t}}.
\end{align}
For the second term on the right hand side, we have
\begin{align}\label{npequ:7}
& \|\Hb_t^{-1}\nabla f_t - (\tHb_t)^{-1}\nabla f_t\|_{\Hb_t} \nonumber\\
& = \sqrt{\nabla f_t^\top(\Hb_t^{-1} - (\tHb_t)^{-1})\Hb_t(\Hb_t^{-1} - (\tHb_t)^{-1})\nabla f_t} \nonumber\\
& = \sqrt{\nabla f_t^\top\Hb_t^{-1/2}(\Ib - \Hb_t^{1/2}(\tHb_t)^{-1} \Hb_t^{1/2})^2\Hb_t^{-1/2}\nabla f_t} \nonumber\\
& \leq \|\Ib - \Hb_t^{1/2}(\tHb_t)^{-1} \Hb_t^{1/2}\|\cdot\|\Hb_t^{-1}\nabla f_t\|_{\Hb_t}  \nonumber\\
& \leq  \|\Ib - \Hb_t^{1/2}(\tHb_t)^{-1} \Hb_t^{1/2}\|\cbr{\|\xb_t - \Hb_t^{-1}\nabla f_t - \ttxb\|_{\Hb_t} + \|\xb_t - \ttxb\|_{\Hb_t}} \nonumber\\
& \stackrel{\mathclap{\eqref{pequ:B2}}}{\leq}\|\Ib - \Hb_t^{1/2}(\tHb_t)^{-1} \Hb_t^{1/2}\|\cbr{\|\xb_t - \Hb_t^{-1}\nabla f_t - \ttxb\|_{\Hb_t} + \sqrt{1.5-\beta}\|\xb_t - \ttxb\|_{\ttHb}}.
\end{align}
Combining \eqref{npequ:6} and \eqref{npequ:7},
\begin{align}\label{pequ:B3}
\|\xb_{t+1} - \ttxb\|_{\ttHb}& \leq\frac{1}{\sqrt{0.5+\beta}} \big\{(1+\|\Ib - \Hb_t^{1/2}(\tHb_t)^{-1} \Hb_t^{1/2}\|)\|\xb_t - \Hb_t^{-1}\nabla f_t - \ttxb\|_{\Hb_t} \nonumber\\
& \quad + \sqrt{1.5-\beta}\|\Ib - \Hb_t^{1/2}(\tHb_t)^{-1} \Hb_t^{1/2}\|\cdot \|\xb_t - \ttxb\|_{\ttHb}\big\}.
\end{align}
Since $\|\Ib - \Hb_t^{-1/2}\tHb_t \Hb_t^{-1/2}\| \leq \psi\leq 1/3$, we have
\begin{equation*}
(1 - \|\Ib - \Hb_t^{-1/2}\tHb_t \Hb_t^{-1/2}\| )\cdot \Ib \preceq \Hb_t^{-1/2}\tHb_t \Hb_t^{-1/2} \preceq (1+\|\Ib - \Hb_t^{-1/2}\tHb_t \Hb_t^{-1/2}\|)\cdot\Ib
\end{equation*}
and further
\begin{equation*}
-\frac{\|\Ib - \Hb_t^{-1/2}\tHb_t \Hb_t^{-1/2}\| }{1+\|\Ib - \Hb_t^{-1/2}\tHb_t \Hb_t^{-1/2}\| }\cdot \Ib\preceq \Hb_t^{1/2}(\tHb_t)^{-1} \Hb_t^{1/2} - \Ib \preceq \frac{\|\Ib - \Hb_t^{-1/2}\tHb_t \Hb_t^{-1/2}\| }{1-\|\Ib - \Hb_t^{-1/2}\tHb_t \Hb_t^{-1/2}\| }\cdot\Ib.
\end{equation*}
Thus, we have
\begin{equation*}
\|\Ib - \Hb_t^{1/2}(\tHb_t)^{-1} \Hb_t^{1/2}\| \leq \frac{\|\Ib - \Hb_t^{-1/2}\tHb_t \Hb_t^{-1/2}\| }{1 - \|\Ib - \Hb_t^{-1/2}\tHb_t \Hb_t^{-1/2}\| }\leq \frac{3}{2}\|\Ib - \Hb_t^{-1/2}\tHb_t \Hb_t^{-1/2}\|.
\end{equation*}
Combining the above inequality with \eqref{pequ:B3}, we obtain
\begin{align}\label{pequ:B4}
&\|\xb_{t+1} - \ttxb\|_{\ttHb} \nonumber\\
&  \leq \frac{3}{2\sqrt{0.5+\beta}} \|\xb_t - \Hb_t^{-1}\nabla f_t - \ttxb\|_{\Hb_t} + \frac{3}{2}\sqrt{\frac{1.5-\beta}{0.5+\beta}}\cdot\|\Ib - \Hb_t^{-1/2}\tHb_t \Hb_t^{-1/2}\|  \cdot \|\xb_t - \ttxb\|_{\ttHb} \nonumber\\
& \leq \frac{3\sqrt{2}}{2}\|\xb_t - \Hb_t^{-1}\nabla f_t - \ttxb\|_{\Hb_t}  + \frac{3\sqrt{3}}{2}\|\Ib - \Hb_t^{-1/2}\tHb_t \Hb_t^{-1/2}\|  \cdot \|\xb_t - \ttxb\|_{\ttHb}.
\end{align}
Furthermore,
\begin{align*}
\|\xb_t - \Hb_t^{-1}\nabla f_t - \ttxb\|_{\Hb_t} & = \|\Hb_t^{-1/2}(\Hb_t(\xb_t - \ttxb) - \nabla f_t)\|\\
& \leq \frac{1}{\sqrt{\lambda_{\min}}}\nbr{\Hb_t(\xb_t - \ttxb) - \rbr{\int_0^1\Hb(\ttxb + \tau(\xb_t - \ttxb))d\tau }(\xb_t-\ttxb)}\\
& \leq \frac{\|\xb_t -\ttxb\|^2}{\sqrt{\lambda_{\min}}}\cdot \int_0^1(1-\tau)L d\tau \leq \frac{L}{2\lambda_{\min}^{3/2}}\|\xb_t-\ttxb\|_{\ttHb}^2.
\end{align*}
Combining the above inequality with \eqref{pequ:B4}, we complete the proof.

\subsection{Proof of Theorem \ref{thm:3}}\label{pf:thm:3}

We suppose the event \eqref{event:EE} happens, which has probability $1-\delta\pi^2/6$. By Lemma \ref{lem:4} and Corollary \ref{cor:1}, we know that $\xb_t$ converges $R$-linearly for all $t\geq \T_1$, and $\xb_{\T:\T+\J}\in \N_{\nu}$ for any $\J\geq 0$.~Thus, it suffices to study the convergence after $\T+\J$. For any $t\geq 0$, to apply Lemmas \ref{lem:5} and \ref{lem:6}, we characterize the difference between $\tHb_{\T+\J+t}$ and $\Hb_{\T+\J+t}$. We have
\begin{align}\label{npequ:8}
& \|\Hb_{\T+\J + t}^{-1/2}\tHb_{\T+\J + t}\Hb_{\T+\J + t}^{-1/2} -\Ib\|  \nonumber\\
& \leq \|\Hb_{\T+\J + t}^{-1/2}(\tHb_{\T+\J + t} - \ttHb)\Hb_{\T+\J + t}^{-1/2}\| + \|\Hb_{\T+\J + t}^{-1/2}\ttHb\Hb_{\T+\J + t}^{-1/2} - \Ib\| \nonumber\\
& \stackrel{\mathclap{\text{Lemma \ref{lem:pre:2}}}}{\leq}\;\;\; \|\Hb_{\T+\J + t}^{-1/2}(\tHb_{\T+\J + t} - \ttHb)\Hb_{\T+\J + t}^{-1/2}\|  + \frac{L}{\lambda_{\min}^{3/2}}\|\xb_{\T+\J+t} - \ttxb\|_{\ttHb}. 
\end{align}
For the first term $\|\Hb_{\T+\J + t}^{-1/2}(\tHb_{\T+\J + t} - \ttHb)\Hb_{\T+\J + t}^{-1/2}\|$, we apply the formula \eqref{equ:simple} and have
\begin{align}\label{npequ:9}
&\|\Hb_{\T+\J + t}^{-1/2}(\tHb_{\T+\J + t} - \ttHb)\Hb_{\T+\J + t}^{-1/2}\| \nonumber\\
& \stackrel{\mathclap{\eqref{equ:simple}}}{\leq} \frac{1}{\lambda_{\min}}\nbr{\barEb_{\T+\J+t}}  + \frac{1}{\T+\J + t + 1}\bigg\{\sum_{j=0}^{\T-1}\|\Hb_{\T+\J + t}^{-1/2}(\Hb_j - \ttHb)\Hb_{\T+\J + t}^{-1/2}\| \nonumber\\
&\hskip6cm + \sum_{j=\T}^{\T+\J+t}\|\Hb_{\T+\J + t}^{-1/2}(\Hb_j - \ttHb)\Hb_{\T+\J + t}^{-1/2}\| \bigg\} \nonumber\\
& \leq \frac{1}{\lambda_{\min}}\|\barEb_{\T+\J+t}\| + \frac{2\T\kappa}{\T+\J+t+1} + \frac{L}{\lambda_{\min}(\T+\J+t+1)}\sum_{j=\T}^{\T+\J+t}\|\xb_j - \ttxb\| \nonumber\\
& \stackrel{\mathclap{\eqref{event:EE}}}{\leq} 8\Upsilon\sqrt{\frac{\log(d(\T+\J+t+1)/\delta)}{\T+\J+t+1}} + \frac{2\T\kappa}{\T+\J+t+1} \nonumber\\ 
&\hskip0.5cm +\frac{L}{\lambda_{\min}(\T+\J+t+1)}\sum_{j=\T}^{\T+\J+t}\sqrt{\frac{2(f(\xb_0) - f(\ttxb))}{\lambda_{\min}}}(1 - \phi)^{(j-\T_1)/2} \quad (\text{also use Lemma \ref{lem:4}}) \nonumber\\
& \leq 8\Upsilon\sqrt{\frac{\log(d(\T+\J+t+1)/\delta)}{\T+\J+t+1}} + \frac{2\T\kappa}{\T+\J+t+1} + \frac{L\sqrt{2(f(\xb_0) - f(\ttxb))}}{\lambda_{\min}^{3/2}(\T+\J+t+1)}(1-\phi)^{\T_2/2}\sum_{j=0}^{\infty}(1-\phi)^{j/2} \nonumber\\
& \stackrel{\mathclap{\eqref{equ:new}}}{\leq}\; 8\Upsilon\sqrt{\frac{\log(d(\T+\J+t+1)/\delta)}{\T+\J+t+1}} + \frac{2\T\kappa}{\T+\J+t+1} + \frac{\sqrt{2}\nu}{\sqrt{3}(\T+\J+t+1)(1-\sqrt{1-\phi})} \nonumber\\
& \leq 8\Upsilon\sqrt{\frac{\log(d(\T+\J+t+1)/\delta)}{\T+\J+t+1}} + \frac{2\T\kappa}{\T+\J+t+1} + \frac{2\nu}{(\T+\J+t+1)\phi} \nonumber\\
& \leq 8\Upsilon\sqrt{\frac{\log(d(\T+\J+t+1)/\delta)}{\T+\J+t+1}} + \frac{4\T\kappa}{\T+\J+t+1} = \rho_t.\quad (\text{since } \nu/\phi \leq1/\phi\leq\T\kappa)
\end{align}
Combining \eqref{npequ:8} and \eqref{npequ:9} together, we obtain
\begin{equation}\label{pequ:B5}
\|\Hb_{\T+\J + t}^{-1/2}\tHb_{\T+\J + t}\Hb_{\T+\J + t}^{-1/2} -\Ib\|\leq \rho_t+\frac{L}{\lambda_{\min}^{3/2}}\|\xb_{\T+\J+t} - \ttxb\|_{\ttHb} \leq \rho_t + \nu\leq \rho_0 + \nu.
\end{equation}
Thus, in order to apply Lemma \ref{lem:6} for all $\{\xb_{\T+\J+t}\}_{t\geq 0}$, we require $\nu\leq 2/3\cdot (0.5-\beta)$ and
\begin{align*}
\rho_0 + \nu \leq \frac{0.5-\beta}{1.5-\beta} & \xLeftarrow{\J = 4\T\kappa/\nu} 8\Upsilon\sqrt{\frac{\log(d(\T+\J)/\delta)}{\T+\J}} + 2\nu \leq \frac{0.5-\beta}{1.5-\beta} \\
& \Longleftarrow 8\Upsilon\sqrt{\frac{\log(d(\T+\J)/\delta)}{\T+\J}}  \vee\nu\leq \frac{1}{3}\frac{0.5-\beta}{1.5-\beta}\\
& \stackrel{\eqref{cond:epsnu}}{\Longleftarrow} 8\Upsilon\sqrt{\frac{\log(d(\T+\J)/\delta)}{\T+\J}}  \leq \epsilon \; \text{ and }\; \epsilon\vee \nu \leq \frac{1}{3}\frac{0.5-\beta}{1.5-\beta},
\end{align*}
which is implied by \eqref{cond:epsnu} and \eqref{nequ:1}. Thus, for any $t\geq 0$, we apply Lemma \ref{lem:6} and obtain
\begin{align}\label{pequ:B6}
& \|\xb_{\T+\J+t+1}-\ttxb\|_{\ttHb} \nonumber\\
& \leq 3\cbr{\frac{L}{\lambda_{\min}^{3/2}}\|\xb_{\T+\J+t}-\ttxb\|_{\ttHb}^2 + \|\Hb_{\T+\J + t}^{-1/2}\tHb_{\T+\J + t}\Hb_{\T+\J + t}^{-1/2} -\Ib\|\cdot \|\xb_{\T+\J+t} - \ttxb\|_{\ttHb}} \nonumber\\
&\stackrel{\mathclap{\eqref{pequ:B5}}}{\leq} 3\cbr{\frac{2L}{\lambda_{\min}^{3/2}}\|\xb_{\T+\J+t}-\ttxb\|_{\ttHb}^2 +  \rho_t\|\xb_{\T+\J+t}-\ttxb\|_{\ttHb}}.
\end{align}
We then claim that
\begin{equation}\label{pequ:B7}
\frac{2L}{\lambda_{\min}^{3/2}}\|\xb_{\T+\J+t}-\ttxb\|_{\ttHb} \leq 3\rho_t, \quad \forall t\geq 0.
\end{equation}
We prove \eqref{pequ:B7} by induction. For $t = 0$, we note that
\begin{equation*}
\frac{2L}{\lambda_{\min}^{3/2}}\|\xb_{\T+\J}-\ttxb\|_{\ttHb} \leq 2\nu = \frac{12\T\kappa}{6\T\kappa/\nu}\leq \frac{12\T\kappa}{\T + 4\T\kappa/\nu +1} = \frac{12\T\kappa}{\T + \J +1} \leq 3\rho_0.
\end{equation*}
Suppose \eqref{pequ:B7} holds for $t\geq 0$, then \eqref{pequ:B6} leads to
\begin{equation*}
\frac{2L}{\lambda_{\min}^{3/2}}\|\xb_{\T+\J+t+1}-\ttxb\|_{\ttHb} \leq \frac{2L}{\lambda_{\min}^{3/2}}\cdot 12\rho_t\|\xb_{\T+\J+t}-\ttxb\|_{\ttHb} \stackrel{\eqref{pequ:B7}}{\leq} 36\rho_t^2.
\end{equation*}
On the other hand, using $(\T+\J+t+2)\leq 2(\T+\J+t+1)$, $\forall t\geq 0$, we know $\rho_t/2 \leq \rho_{t+1}$. Thus, it suffices to show $36\rho_t^2\leq 3\rho_t/2\Longleftrightarrow 24\rho_t\leq 1$. Since $\rho_t$ is decreasing, we require $24\rho_0\leq 1$. Note that
\begin{equation*}
\rho_0 \leq 8\Upsilon\sqrt{\frac{\log(d\T/\delta)}{\T}} + \nu \stackrel{\eqref{T2}}{\leq} \epsilon + \nu.
\end{equation*}
Thus, $24\rho_0\leq 1$ is guaranteed by \eqref{cond:epsnu}. Combining \eqref{pequ:B6} and \eqref{pequ:B7} completes the proof.

\subsection{Proof of Corollary \ref{cor:2}}\label{pf:cor:2}

We only need to note that
\begin{align*}
& \frac{4\T\kappa}{\T+\J+t+1} \leq 8\Upsilon\sqrt{\frac{\log(d(\T+\J+t+1)/\delta)}{\T+\J+t+1}}\\
& \Longleftrightarrow  \frac{\T^2\kappa^2}{4\Upsilon^2}\leq (\T+\J+t+1)\log(d(\T+\J+t+1)/\delta) \Longleftarrow \T+\J + t+1 \geq \frac{\T^2\kappa^2}{4\Upsilon^2\log(d\T/\delta)}. 
\end{align*}
This completes the proof.

\subsection{Proof of Lemma \ref{lem:7}}\label{pf:lem:7}

We characterize $\sum_{i=0}^{t}z_{i,t}^2$ and $z_t^{(\max)}$ in \eqref{equ:sub:concen}, where $z_{i,t} = (w_i-w_{i-1})/w_t$ and $z_t^{(\max)} = \max_{i\in\{0,\ldots,t\}}z_{i,t}$. We have
\begin{multline}\label{npequ:10}
\sum_{i=0}^tz_{i,t}^2 = \frac{1}{w(t)^2}\sum_{i=0}^t(w(i) - w(i-1))^2 \leq \frac{1}{w(t)^2}\sum_{i=0}^{t}(w'(i))^2 \leq \frac{1}{w(t)^2}\int_0^{t+1}(w'(i))^2 \; di\\
= \frac{1}{w(t)^2}\int_{0}^{t+1} (w(i)w'(i))' - w(i)w''(i) di
\leq \frac{w(t+1)w'(t+1)}{w(t)^2} \leq \Psi^2\frac{w'(t)}{w(t)},
\end{multline} 
where the first two inequalities use the fact that $w'(t)$ is non-negative and  non-decreasing; the second last inequality uses the fact that $w(t)w''(t)\geq 0$, $\forall t\geq -1$; and the last inequality uses Assumption \ref{ass:5}(v). By the same derivation, we have
\begin{equation}\label{npequ:11}
z_t^{(\max)} = \max_{i\in \{0,\ldots,t\}}z_{i,t} = \frac{\max_{i\in\{0,\ldots,t\}}(w(i)-w(i-1))}{w(t)} \leq \frac{\max_{i\in\{0,\ldots,t\}} w'(i)}{w(t)} = \frac{w'(t)}{w(t)}.
\end{equation}
Plugging \eqref{npequ:10} and \eqref{npequ:11} into \eqref{equ:sub:concen} completes the proof.

\subsection{Proof of Lemma \ref{lem:8}}\label{pf:lem:8}

By Lemma \ref{lem:7}, we know \eqref{con:barE} holds for any $t$. By the definition of $\I_1$ in \eqref{I1}, we know 
\begin{equation*}
\log\rbr{\frac{d(t+1)}{\delta}}\frac{w'(t)}{w(t)} \leq \rbr{\frac{\epsilon}{8\Upsilon\Psi}}^2 \wedge 1, \quad \forall t\geq \I_1,
\end{equation*}
which implies $\|\barEb_t\| \leq \epsilon\lambda_{\min}$. Using \eqref{equ:simple} and noting that $\lambda_{\min}\cdot\Ib\preceq\sum_{i=0}^{t}(w_i-w_{i-1})\Hb_i/w_t\preceq \lambda_{\max} \cdot\Ib$, we complete the proof.

\subsection{Proof of Theorem \ref{thm:4}}\label{pf:thm:4}

We follow the same proof structure as Theorem \ref{thm:3}. We suppose the event \eqref{event:EE:new} happens, which occurs with probability $1 - \delta\pi^2/6$. We only need to study the convergence after $\I + \U$ where $\U$ is chosen such that $w(\I+\U) = 2w(\I-1)\kappa/\nu$. We claim that
\begin{equation}\label{pequ:B8}
\|\xb_{\I+\U+t+1} - \ttxb\|_{\ttHb} \leq 6\theta_t\|\xb_{\I+\U+t} - \ttxb\|_{\ttHb}\; \text{ and }\; \frac{2L}{\lambda_{\min}^{3/2}}\|\xb_{\I+\U+t+1} - \ttxb\|_{\ttHb} \leq\theta_{t+1}, \; \forall t\geq 0.
\end{equation}
We prove \eqref{pequ:B8} by induction. Before showing \eqref{pequ:B8}, we note that 
\begin{equation}\label{nequ:3}
\theta_t \leq \epsilon + 3\nu \leq 1/(12\Psi),\quad\quad \theta_{t+1} \geq \theta_t/\Psi, \quad\quad \forall t\geq 0.
\end{equation}
The first result is implied by \eqref{epsnu} and the second result is implied by Assumption \ref{ass:5}(v). For $t = 0$, we characterize the difference between $\tHb_{\I+\U}$ and $\Hb_{\I+\U}$. We have
\begin{align}\label{npequ:12}
&\|\Hb_{\I+\U}^{-1/2}\tHb_{\I+\U}\Hb_{\I+\U}^{-1/2} -\Ib\| \nonumber\\ 
&\leq \|\Hb_{\I+\U}^{-1/2}(\tHb_{\I+\U} - \ttHb)\Hb_{\I+\U}^{-1/2}\| + \|\Hb_{\I+\U}^{-1/2}\ttHb\Hb_{\I+\U}^{-1/2} - \Ib\| \nonumber\\
& \leq \|\Hb_{\I+\U}^{-1/2}(\tHb_{\I+\U} - \ttHb)\Hb_{\I+\U}^{-1/2}\|  + \frac{L}{\lambda_{\min}^{3/2}}\|\xb_{\I+\U} - \ttxb\|_{\ttHb}.\quad (\text{Lemma \ref{lem:pre:2}})
\end{align}
For the first term on the right hand side, we have
\begin{align}\label{npequ:13}
&\|\Hb_{\I+\U}^{-1/2}(\tHb_{\I+\U} - \ttHb)\Hb_{\I+\U}^{-1/2}\| \nonumber\\ 
&\stackrel{\mathclap{\eqref{equ:simple}}}{\leq} \frac{\|\barEb_{\I+\U}\|}{\lambda_{\min}}  + \frac{1}{w(\I+\U)}\sum_{j=0}^{\I+\U}(w(j) - w(j-1))\|\Hb_{\I+\U}^{-1/2}(\Hb_j - \ttHb)\Hb_{\I+\U}^{-1/2}\| \nonumber\\
&=\frac{\|\barEb_{\I+\U}\|}{\lambda_{\min}}  + \frac{1}{w(\I+\U)}\cbr{\rbr{\sum_{j=0}^{\I-1} + \sum_{j=\I}^{\I+\U}}(w(j) - w(j-1))\|\Hb_{\I+\U}^{-1/2}(\Hb_j - \ttHb)\Hb_{\I+\U}^{-1/2}\|} \nonumber\\
&\leq \frac{\|\barEb_{\I+\U}\|}{\lambda_{\min}}  + \frac{2w(\I-1)\kappa}{w(\I+\U)} + \frac{L}{\lambda_{\min}w(\I+\U)}\sum_{j=\I}^{\I+\U}(w(j) - w(j-1))\|\xb_j - \ttxb\| \nonumber\\
&\leq \frac{\|\barEb_{\I+\U}\|}{\lambda_{\min}} + \frac{2w(\I-1)\kappa}{w(\I+\U)} + \sum_{j=\I}^{\I+\U}\frac{L(w(j) - w(j-1))}{\lambda_{\min}w(\I+\U)}\cbr{\frac{2(f(\xb_0) - f(\ttxb))}{\lambda_{\min}}(1-\phi)^{j - \I_1}}^{1/2} \; (\text{Lemma \ref{lem:4}}) \nonumber\\
&= \frac{\|\barEb_{\I+\U}\|}{\lambda_{\min}} + \frac{2w(\I-1)\kappa}{w(\I+\U)} + \cbr{\frac{2L^2(f(\xb_0)-f(\ttxb))}{\lambda_{\min}^{3}}(1-\phi)^{\T_2} }^{1/2}\sum_{j=0}^{\U}\frac{w(\I+j) - w(\I+j-1)}{w(\I+\U)}(1-\phi)^{j/2} \nonumber\\
& \stackrel{\mathclap{\eqref{equ:new}}}{\leq}\; \frac{\|\barEb_{\I+\U}\|}{\lambda_{\min}} + \frac{2w(\I-1)\kappa}{w(\I+\U)} + \nu\sum_{j=0}^{\U}\frac{w(\I+j) - w(\I+j-1)}{w(\I+\U)} \nonumber\\
&\leq \frac{\|\barEb_{\I+\U}\|}{\lambda_{\min}} + \frac{2w(\I-1)\kappa}{w(\I+\U)} + \nu =  \frac{\|\barEb_{\I+\U}\|}{\lambda_{\min}} + \frac{4w(\I-1)\kappa}{w(\I+\U)} \leq \theta_0.
\end{align}
Combining \eqref{npequ:12} and \eqref{npequ:13}, we know that, to apply Lemma \ref{lem:6}, we need $\nu \leq 2/3\cdot(0.5-\beta)$~and
\begin{equation*}
\theta_0 + \nu \leq\frac{0.5-\beta}{1.5-\beta} \stackrel{\eqref{nequ:3}}{\Longleftarrow} \epsilon + 4\nu \leq \frac{0.5-\beta}{1.5-\beta}\Longleftarrow \epsilon\vee \nu \leq \frac{1}{5}\frac{0.5-\beta}{1.5-\beta},
\end{equation*}
as implied by \eqref{epsnu}. Thus, Lemma \ref{lem:6} leads to
\begin{equation}
\|\xb_{\I+\U+1} - \ttxb\|_{\ttHb} \leq 3\cbr{\frac{2L}{\lambda_{\min}^{3/2}}\|\xb_{\I+\U} - \ttxb\|_{\ttHb}^2 + \theta_0\|\xb_{\I+\U} - \ttxb\|_{\ttHb} } \leq 6\theta_0\|\xb_{\I+\U} - \ttxb\|_{\ttHb},
\end{equation}
where the last inequality is due to $\xb_{\I+\U}\in \N_{\nu}$ and $2\nu \leq \theta_0$. Furthermore, we can see that
\begin{equation*}
\frac{2L}{\lambda_{\min}^{3/2}}\|\xb_{\I+\U+1} - \ttxb\|_{\ttHb} \leq \frac{2L}{\lambda_{\min}^{3/2}} \cdot 6\theta_0\|\xb_{\I+\U} - \ttxb\|_{\ttHb} \leq 6\theta_0^2=(6\Psi\theta_0)\cdot\frac{\theta_0}{\Psi}\stackrel{\eqref{nequ:3}}{\leq} \theta_1.
\end{equation*}
Thus, \eqref{pequ:B8} holds for $t = 0$. Suppose \eqref{pequ:B8} holds for $t-1$ with $t\geq 1$, we prove \eqref{pequ:B8} for $t$. We still characterize the difference between $\tHb_{\I+\U+t}$ and $\Hb_{\I+\U+t}$. We have
\begin{align}\label{npequ:14}
& \|\Hb_{\I+\U + t}^{-1/2}\tHb_{\I+\U + t}\Hb_{\I+\U + t}^{-1/2} -\Ib\| \nonumber\\
& \leq \|\Hb_{\I+\U + t}^{-1/2}(\tHb_{\I+\U + t} - \ttHb)\Hb_{\I+\U + t}^{-1/2}\| + \|\Hb_{\I+\U + t}^{-1/2}\ttHb\Hb_{\I+\U + t}^{-1/2} - \Ib\| \nonumber\\
& \stackrel{\mathclap{\text{Lemma \ref{lem:pre:2}}}}{\leq}\;\;\; \|\Hb_{\I+\U + t}^{-1/2}(\tHb_{\I+\U + t} - \ttHb)\Hb_{\I+\U + t}^{-1/2}\|  + \frac{L}{\lambda_{\min}^{3/2}}\|\xb_{\I+\U+t} - \ttxb\|_{\ttHb}.
\end{align}
For the first term on the right hand side, we have
\begin{align}\label{npequ:15}
&\|\Hb_{\I+\U + t}^{-1/2}(\tHb_{\I+\U + t} - \ttHb)\Hb_{\I+\U + t}^{-1/2}\| \nonumber\\
&\stackrel{\mathclap{\eqref{equ:simple}}}{\leq} \frac{\|\barEb_{\I+\U+t}\|}{\lambda_{\min}} + \frac{1}{w(\I+\U +t)}\cbr{\rbr{\sum_{j=0}^{\I-1} + \sum_{j=\I}^{\I+\U} + \sum_{j=\I+\U+1}^{\I+\U+t}}(w(j) - w(j-1))\|\Hb_{\I+\U + t}^{-1/2}(\Hb_j - \ttHb)\Hb_{\I+\U + t}^{-1/2}\| } \nonumber\\
&\leq \frac{\|\barEb_{\I+\U+t}\|}{\lambda_{\min}} + \frac{2w(\I-1)\kappa + w(\I+\U)\nu}{w(\I+\U +t)} + \sum_{j=1}^{t}\frac{L(w(\I+\U+j) - w(\I+\U+j-1))}{\lambda_{\min}^{3/2}w(\I+\U +t)}\frac{\|\xb_{\I+\U} -\ttxb\|_{\ttHb}}{(2\Psi)^j} \nonumber\\
& \leq \frac{\|\barEb_{\I+\U+t}\|}{\lambda_{\min}} + \frac{2w(\I-1)\kappa + w(\I+\U)\nu}{w(\I+\U +t)} + \frac{\nu}{w(\I+\U +t)}\sum_{j=1}^{t}\frac{w(\I+\U+j)}{(2\Psi)^j} \nonumber\\
& \leq \frac{\|\barEb_{\I+\U+t}\|}{\lambda_{\min}} + \frac{2w(\I-1)\kappa + w(\I+\U)\nu}{w(\I+\U +t)} + \frac{\nu w(\I+\U)}{w(\I+\U +t)}\sum_{j=1}^{t}\frac{w(\I+\U+j)}{w(\I+\U)(2\Psi)^j} \nonumber\\
& \leq \frac{\|\barEb_{\I+\U+t}\|}{\lambda_{\min}} + \frac{2w(\I-1)\kappa + 2w(\I+\U)\nu}{w(\I+\U +t)}\quad (\text{Assumption \ref{ass:5}(v)}) \nonumber\\
& \leq\theta_t,
\end{align}
where the second inequality uses the hypothesis and the fact that the convergence rate $6\theta_t\leq 1/(2\Psi)$. Thus, combining \eqref{npequ:14} and \eqref{npequ:15},
\begin{equation*}
\|\Hb_{\I+\U + t}^{-1/2}\tHb_{\I+\U + t}\Hb_{\I+\U + t}^{-1/2} -\Ib\| \leq \theta_t + \nu \stackrel{\eqref{nequ:3}}{\leq} \epsilon + 4\nu \stackrel{\eqref{epsnu}}{\leq}\frac{0.5-\beta}{1.5-\beta}.
\end{equation*}
Thus, the conditions of Lemma \ref{lem:6} are satisfied, which leads to
\begin{multline}\label{nequ:2}
\|\xb_{\I+\U+t+1} - \ttxb\|_{\ttHb} \leq 3\cbr{\frac{2L}{\lambda_{\min}^{3/2}}\|\xb_{\I+\U+t} - \ttxb\|_{\ttHb}^2 + \theta_t\|\xb_{\I+\U+t} - \ttxb\|_{\ttHb}   } \\ \leq 6\theta_t\|\xb_{\I+\U+t} - \ttxb\|_{\ttHb}.
\end{multline}
The last inequality uses the hypothesis. This shows the first result in \eqref{pequ:B8}. For the second result, we have
\begin{equation*}
\frac{2L}{\lambda_{\min}^{3/2}}\|\xb_{\I+\U+t+1} - \ttxb\|_{\ttHb} \stackrel{\eqref{nequ:2}}{\leq} \frac{2L}{\lambda_{\min}^{3/2}} \cdot 6\theta_t\|\xb_{\I+\U+t} - \ttxb\|_{\ttHb} \leq 6\theta_t^2 = (6\theta_t\Psi)\cdot\frac{\theta_t}{\Psi}\stackrel{\eqref{nequ:3}}{\leq} \theta_{t+1}.
\end{equation*}
This completes the induction and finishes the proof.

\end{document}